\def\@abssec#1{\vspace{.05in}\footnotesize \parindent .2in
{\bf #1. }\ignorespaces}
\newtheorem{theorem}{Theorem}[section]
\newtheorem{lemma}[theorem]{Lemma}
\newtheorem{proposition}[theorem]{Proposition}
\newtheorem{remark}[theorem]{Remark}
\newcounter{hypo}
\newcommand{\R}{\ensuremath{\mathbb{R}}}
\newcommand{\T}{\ensuremath{\mathbb{T}}}
\newcommand{\beqs}{\begin{equation*}}
\newcommand{\eeqs}{\end{equation*}}
\renewcommand{\div}{{\rm div}\,}
\newcommand{\tend}{\bibliographystyle{plain}\bibliography{ccrituniq}\end{document}}
\newcommand{\dd}{\mathrm{d}}
\allowdisplaybreaks \numberwithin{equation}{section}
\begin{document}

\title[Uni-directional Euler-alignment system]{Global well-posedness and refined regularity criterion for the uni-directional Euler-alignment system}

\author{Yatao Li}
\address{Department of Mathematics, Jiangxi University of Finance and Economics, Nanchang 330032, P. R. China
AND Laboratory of Mathematics and Complex Systems (MOE), School of Mathematical Sciences, Beijing Normal University, Beijing 100875, P.R. China}
\email{liyatao$\_$maths@163.com}
\author{Qianyun Miao}
\address{School of Mathematics and Statistics, Beijing Institute of Technology, Beijing 100081, P. R. China}
\email{qianyunm@bit.edu.cn}
\author{Changhui Tan}
\address{Department of Mathematics, University of South Carolina, Columbia SC 29208, USA}
\email{tan@math.sc.edu}
\author{Liutang Xue}
\address{Laboratory of Mathematics and Complex Systems (MOE), School of Mathematical Sciences, Beijing Normal University, Beijing 100875, P.R. China}
\email{xuelt@bnu.edu.cn}

\keywords{Euler-alignment system, uni-directional flow, global regularity, continuity criterion, modulus of continuity, critical scalings.}
\subjclass[2010]{35Q35, 76N10, 35B65, 35B40}
\date{\today}
\maketitle

\begin{abstract}
We investigate global solutions to the Euler-alignment system in $d$ dimensions with unidirectional flows and strongly singular communication protocols $\phi(x) = |x|^{-(d+\alpha)}$ for $\alpha \in (0,2)$. Our paper establishes global regularity results in both the subcritical regime $1<\alpha<2$ and the critical regime $\alpha=1$. Notably, when $\alpha=1$, the system exhibits a critical scaling similar to the critical quasi-geostrophic equation. To achieve global well-posedness, we employ a novel method based on propagating the modulus of continuity. Our approach introduces the concept of simultaneously propagating multiple moduli of continuity, which allows us to effectively handle the system of two equations with critical scaling. Additionally, we improve the regularity criteria for solutions to this system in the supercritical regime $0<\alpha<1$.
\end{abstract}

\tableofcontents

\section{Introduction}
In this paper, we consider the hydrodynamic Euler-alignment system described by the following equations
\begin{equation}\label{eq.EAS}
\begin{split}
\begin{cases}
  ~~\partial_t \rho + \div( \rho \mathbf{u})= 0, \\[3pt]
  ~~\partial_t\mathbf{ u} + \mathbf{u}\cdot\nabla \mathbf{u }=\displaystyle\int_{\R^d}\phi(x-y)(u(y)-u(x))\rho(y)\mathrm dy,
\end{cases}
\end{split}
\end{equation}
for $(x,t)\in \R^d\times \R_{+}$, subject to the initial condition
\begin{align*}
  (\rho, \mathbf{u})|_{t=0}(x) = (\rho_0,\mathbf{u}_0)(x).
\end{align*}
Here, $\rho$ and $\mathbf{u}=(u^1(x,t),\cdots,u^d(x,t))$ represent the density and velocity vector field, respectively. The second equation of \eqref{eq.EAS} includes the \emph{alignment force}, which is determined by the \emph{communication protocol} $\phi$, that measures the strength of the alignment interactions and is assumed to be non-negative and radially decreasing. The alignment force can be expressed as a commutator:
\begin{equation}\label{eq:alignment}
\int_{\R^d}\phi(x-y)(u(y)-u(x))\rho(y)\mathrm dy=-[\mathcal{L}_{\phi}, \mathbf{u}]\,\rho:=-\mathcal{L}_{\phi}(\rho \mathbf{u})+\mathcal{L}_{\phi}(\rho) \mathbf{u},		
\end{equation}
where
\[
  \mathcal{L}_{\phi}(f)(x) := \int_{\R^d} \phi(x-y)(f(x)-f(y))\mathrm dy.
\]

The system \eqref{eq.EAS} can be  seen as a macroscopic representation of the well-known \emph{Cucker-Smale flocking model} \cite{cucker2007emergent}
\begin{align*}
\begin{split}
\begin{cases}
    ~~\mathbf{\dot x}_i=\mathbf{v}_i,\\[3pt]
    ~~\mathbf{\dot v}_i=\displaystyle\frac 1N\sum_{j=1}^N\phi(\mathbf{x}_i-\mathbf{x}_j)(\mathbf{v}_j-\mathbf{v}_i),
\end{cases}
\end{split}
\text{where }  \big(\mathbf{x}_i(t), \mathbf{v}_i(t)\big)\in \R^d\times\R^d,\quad
i=1,\cdots,N,
\end{align*}
which describes the collective motion of $N$ agents adjusting their velocities based on a weighted average of their neighbors. For a detailed derivation of \eqref{eq.EAS} and related discussions, we refer the readers to \cite{carrillo2017review, ha2008particle,shvydkoy2021dynamics} and the references therein.

Our main focus is on the global well-posedness and asymptotic behaviors of the Euler-alignment system \eqref{eq.EAS}. Extensive progress has been made in recent years, revealing that different types of communication protocols lead to different system behaviors. For bounded and Lipschitz communication protocols, the alignment force \eqref{eq:alignment} acts as a nonlocal damping mechanism. This results in a \emph{critical threshold phenomenon}: subcritical initial data lead to global well-posedness, while supercritical initial data lead to the formation of finite-time singularities. See e.g. \cite{carrillo2016critical,tadmor2014critical}. A similar theory has been established for weakly singular communication protocols, where $\phi$ is unbounded but integrable at the origin. See e.g. \cite{tan2020euler}.

Another type of communication protocol that is of particular interest to us is when $\phi$ is \emph{strongly singular}, meaning it is not integrable at the origin. A prototype example of such a protocol is given by:
\[\phi(z)= c_\alpha |z|^{-(d+\alpha)},\quad c_\alpha=\tfrac{2^\alpha\Gamma(\frac{d+\alpha}2)}{\pi^{\frac{d}{2}}\Gamma(-\frac{\alpha}{2})},\quad \alpha\in(0,2),\]
when the operator $\mathcal{L}_{\phi}$ is characterized by the fractional Laplacian:
\begin{align*}
  \mathcal{L}_{\phi}(f)(x)=\Lambda^\alpha f(x):=c_\alpha\mathrm {p.v.}\int_{\R^d}\frac {f(x)-f(y)}{|x-y|^{d+\alpha}}\dd y.
\end{align*}

The singularity in the communication protocol induces dissipation (or ellipticity) in the alignment force \eqref{eq:alignment}, resulting in a regularization effect on the solutions of \eqref{eq.EAS}. This phenomenon has been the subject of extensive research, especially in the context of a one-dimensional periodic domain $\T$. Notably, studies conducted in \cite{do2018global,shvydkoy2017eulerian,shvydkoy2017eulerian2,shvydkoy2018eulerian} have demonstrated that for any smooth non-vacuous initial data, global smooth solutions arise. Furthermore, these results have been extended to encompass general strongly singular communication protocols \cite{kiselev2018global}, as well as scenarios involving misalignment in communications \cite{miao2021global}. When the initial data contain a vacuum, the ellipticity becomes degenerate, leading to the possibility of finite-time singularity formations \cite{arnaiz2021singularity,tan2019singularity}.

The remarkable success of the theory in one dimension can be largely attributed to the presence of a conserved auxiliary quantity:
\[ G=\partial_x u-\Lambda^\alpha\rho,\]
which satisfies the continuity equation
\begin{equation*}
  \partial_tG+\partial_x(Gu)=0.
\end{equation*}
The conservation of $G$ plays a crucial role in the analysis and understanding of the system dynamics. In particular, when the initial condition $G_0$ is identically zero ($G_0\equiv0$), it follows that $G\equiv0$, and \eqref{eq.EAS} reduces to the following advection-diffusion equation:
\begin{equation}\label{eq:1DG0}
\partial_t\rho+u\partial_x\rho=-\rho\Lambda^\alpha\rho,\quad u=\partial_x^{-1}\Lambda^\alpha\rho=-\partial_x\Lambda^{\alpha-2}\rho.	
\end{equation}
This equation is recognized and extensively studied as a model for one-dimensional nonlinear porous medium flow with fractional potential pressure \cite{caffarelli2013regularity,caffarelli2011nonlinear,caffarelli2016regularity}. Furthermore, in the special case where $\alpha=1$, \eqref{eq:1DG0} corresponds to a one-dimensional model of the two-dimensional critical quasi-geostrophic equation, which has been investigated in \cite{chae2005finite}.

However, extending the theory to higher dimensions has proven to be challenging and has not yielded comparable success. A natural replacement for the auxiliary quantity in higher dimensions is given by:
\[G:=\nabla\cdot\mathbf{u}-\Lambda^\alpha\rho\]
which satisfies the equation:
\begin{align*}
  \partial_t G + \nabla\cdot(G\mathbf{u})=(\nabla\cdot\mathbf{u})^2-\mathrm{Tr}[(\nabla\mathbf{u})^2].
\end{align*}
However, this new formulation is no longer conservative, as the right-hand side is not necessarily zero. The absence of a conserved quantity in higher dimensions poses a significant challenge in extending the results obtained in one dimension. In the multi-dimensional case, the global well-posedness result remains incomplete and typically requires additional smallness assumptions on the initial data. For instance, when the initial velocity amplitude is small relative to its higher-order norms, Shvydkoy \cite{shvydkoy2019global} established global existence and stability results for nearly aligned flocks. Additionally, Danchin et al. \cite{danchin2019regular} demonstrated global well-posedness for solutions to \eqref{eq.EAS} within the critical Besov space framework, under the assumption that the initial data $(\rho_0,\mathbf{u}_0)$ is sufficiently close to the constant state $(1,\mathbf{0})$ in terms of Besov space norms.

Recently, Lear and Shvydkoy introduced a class of uni-directional flows in their work \cite{lear2021unidirectional}. This class of flows is given by:
\begin{align}\label{uni.u}
\mathbf{u}(x,t)=u(x,t)\mathbf{d},\quad\mathbf{d}\in\mathbb{S}^{d-1},\quad u(x,t):\R^d\times \R_+\rightarrow\R.
\end{align}
It can be observed that the structure \eqref{uni.u} is preserved over time by the Euler-alignment system \eqref{eq.EAS}. Moreover, under the uni-directional flow condition \eqref{uni.u}, the term $(\nabla\cdot\mathbf{u})^2-\mathrm{Tr}[(\nabla\mathbf{u})^2]$ in the equation for $G$ vanishes, leading to the conservation of $G$.

Without loss of generality, considering the rotational invariance of system \eqref{eq.EAS}, we can assume that $\mathbf{d}=(1,0,\cdots,0)$ corresponds to the $x_1$ direction. This leads to the following system:
\begin{equation}\label{eq:Euni}
\begin{split}
\begin{cases}
  ~~\partial_t \rho + \partial_{x_1}( \rho u)= 0, \\
  ~~\partial_tu + u\,\partial_{x_1}{u }=-\Lambda^\alpha\big(\rho {u}\big) + (\Lambda^\alpha\rho) u
 = :\mathcal{C}_\alpha(u,\rho), \\
  ~~(\rho,  {u})|_{t=0}(x) = (\rho_0, {u}_0)(x),
\end{cases}
\end{split}
\end{equation}

Although this system exhibits the characteristics of one-dimensional flow, it is important to note that the spatial variable $x$ still belongs to $\R^d$, and the dissipation is in $d$ dimensions. Thus, it differs from the traditional one-dimensional Euler-alignment system. However, we recall the aforementioned feature of system \eqref{eq:Euni}, namely the conservation of the auxiliary quantity:
\begin{equation}\label{eq:G}
G=\partial_{x_1}u- \Lambda^\alpha \rho, \qquad \partial_t G+\partial_{x_1}(Gu)=0.
\end{equation}

Based on this conservation property, it is reasonable to inquire whether the system \eqref{eq:Euni} possesses a similar global well-posedness theory as the one-dimensional system. However, this is not the case.

To illustrate this, consider the special case when $G_0\equiv0$, resulting in $G\equiv0$ throughout the system. In this scenario, \eqref{eq:Euni} reduces to the advection-diffusion equation:
\begin{equation}\label{eq:G0}
\partial_t\rho+u\partial_{x_1}\rho=-\rho\Lambda^\alpha\rho,\quad u=\partial_{x_1}^{-1}\Lambda^\alpha\rho.
\end{equation}
In contrast to the one-dimensional system \eqref{eq:1DG0}, where the regularity of $u$ can be controlled by the regularity of $\rho$ through their relationship, in \eqref{eq:G0}, only $\partial_{x_1}u$ can be controlled by the regularity of $\rho$. There is no direct mechanism to control the other partial derivatives of $u$ based on its relation to $\rho$.

Instead, one may approach the system \eqref{eq:Euni} by focusing directly on the $u$ equation in $\eqref{eq:Euni}_2$ and investigate the regularization effect of the alignment force $\mathcal{C}_\alpha(u,\rho)$. By enforcing $\rho\equiv1$, we observe that
\[\mathcal{C}_\alpha(u,1)=-\Lambda^\alpha u.\]
In this case, the equation $\eqref{eq:Euni}_2$  becomes the fractal Burgers equation
\begin{equation}\label{eq:fBurgers}
 \partial_t u+ u\,\partial_{x_1}u = -\Lambda^\alpha u,
\end{equation}
 which has been extensively studied in \cite{kiselev2008blow}. The behavior of solutions depends on the value of $\alpha$. When $\alpha\in(1,2)$, the dissipation dominates, resulting in globally well-posed solutions. However, when $\alpha\in(0,1)$, the dissipation is not strong enough, and finite-time singularity formations may occur.
The critical case arises when $\alpha=1$, and it is particularly subtle to analyze. Global well-posedness has been established using a novel method based on the modulus of continuity. This approach was invented by Kiselev et al. in their celebrated work on the critical quasi-geostrophic equations \cite{kiselev2007global}, and has been successfully used to analyze many equations with critical scalings, e.g. \cite{kiselev2011nonlocal,kiselev2018global,kiselev2022global,miao2012global,miao2021global}.

The uni-directional Euler-alignment system \eqref{eq:Euni} has been thoroughly investigated by Lear and Shvydkoy in their work \cite{lear2021unidirectional} for the case of $\alpha\in(1,2)$. They establish that the alignment force $\mathcal{C}_\alpha(u,\rho)$, which behaves similarly to the fractional Laplacian $-\Lambda^\alpha u$, dominates the Burgers nonlinearity, leading to global well-posedness. Their approach builds upon the H\"older regularization results developed in \cite{silvestre2012holder,schwab2016regularity}.

However, in the critical case of $\alpha=1$, the intricate structure of $\mathcal{C}_1(u,\rho)$ presents significant challenges in extracting sufficient dissipation to counterbalance the nonlinear advection. To the best of our knowledge, the only available result in the literature is provided by Lear in \cite{lear2021global}, where global well-posedness is established for the specific case \eqref{eq:G0}. For general equation \eqref{eq:Euni}, smallness assumptions are required to obtain global smooth solutions.

Now, we present our first main result on the global well-posedness of the system \eqref{eq:Euni} for $1\leq\alpha<2$.
\begin{theorem}[Global well-posedness]\label{thm.main}
Let $1\leq \alpha<2$ and $(\rho_0,u_0)\in H^{m+\alpha}(\T^d)\times H^{m+1}(\T^d)$, where $m>\frac d2+1$ and $\rho_0(x)>0$.
Then there exists a global unique non-vacuous solution $(\rho,u)$ to the uni-directional Euler-alignment system \eqref{eq:Euni}
in the following class:
\[
\rho\in C_w(\R_+; H^{m+\alpha}(\T^d)),\quad u\in C_w(\R_+; H^{m+1}(\T^d))\cap L^2 (\R_+; \dot H^{m+1+\frac\alpha2}(\T^d)).
\]
\end{theorem}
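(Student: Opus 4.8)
The plan is to prove \THM{thm.main} via the standard continuation scheme: establish local-in-time existence of a smooth solution in the stated class by energy estimates and a fixed-point / approximation argument, and then upgrade this to a global solution by deriving \emph{a priori} bounds that prevent blow-up. The local theory is essentially routine for $1\le\alpha<2$: mollify the fractional dissipation and the transport terms, solve the regularized ODE system in Sobolev space, and pass to the limit. Uniqueness follows from an $L^2$-type energy estimate on the difference of two solutions, using that $\rho_0>0$ stays bounded below on a short time interval and that the commutator structure $\mathcal C_\alpha(u,\rho)$ has the right sign/smoothing. The non-vacuous property $\rho(x,t)>0$ for all $t$ should be obtained from the transport structure of $\rho$ together with a lower bound $\inf_x \rho(x,t)\ge \inf_x\rho_0(x)\,e^{-\int_0^t\|\partial_{x_1}u\|_{L^\infty}\,ds}$, which is finite as long as we control $\|\partial_{x_1}u\|_{L^\infty}$ along the solution.

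The heart of the matter is the global \emph{a priori} estimate, and I expect the modulus-of-continuity method advertised in the abstract to be the engine. First I would record the basic conserved/controlled quantities: conservation of total mass $\int\rho$, the maximum principle giving $\|u(t)\|_{L^\infty}\le\|u_0\|_{L^\infty}$ (using that the alignment force is an average of $u(y)-u(x)$ against a positive measure $\rho\,dy$, hence does not increase $\sup u$ or decrease $\inf u$), and the conservation law \EQ{eq:G} for $G=\partial_{x_1}u-\Lambda^\alpha\rho$ which yields $\|\rho(t)\|_{L^\infty}$ and $\|G(t)\|_{L^\infty}$ control via the transport equation $\partial_tG+\partial_{x_1}(Gu)=0$ — though note $G$ is merely transported with a compressible velocity, so its $L^\infty$ norm can only be bounded after one controls $\|\partial_{x_1}u\|_{L^1_t L^\infty_x}$; this circularity is broken by the dissipation. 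Next, for $1<\alpha<2$ one can lean on the Lear--Shvydkoy framework: $\mathcal C_\alpha(u,\rho)$ behaves like $-c(x,t)\Lambda^\alpha u$ with $c$ bounded above and below once $\rho$ is bounded above and below, so the Burgers nonlinearity (order $1$) is dominated by the dissipation (order $\alpha>1$), and Hölder/Schauder estimates propagate regularity. The genuinely new and hard case is $\alpha=1$, where nonlinearity and dissipation are of the same order.

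For $\alpha=1$ the plan is to construct a family of moduli of continuity $\omega(\xi)$ — one for $\rho$ (or for $\Lambda\rho$) and one for $u$, as the abstract's phrase ``simultaneously propagating multiple moduli of continuity'' signals — that are preserved by the coupled flow. The mechanism, following Kiselev--Nazarov--Volberg, is: if at some first time $t_*$ the solution touches the boundary of the invariant class, there exist points $x\ne y$ with, say, $u(x,t_*)-u(y,t_*)=\omega(|x-y|)$ and the moduli obeyed strictly before $t_*$; one then shows $\frac{d}{dt}\big(u(x,t)-u(y,t)\big)<0$ at $t_*$ by bounding the contribution of the transport term $u\partial_{x_1}u$ (which is $\lesssim \omega'(\xi)\,\omega(\xi)$, using that $\|\partial_{x_1}u\|_{L^\infty}\lesssim \omega'(0^+)$), the contribution of $\Lambda\rho$-type terms in $\mathcal C_1(u,\rho)$ (controlled by the modulus assumed on $\rho$), against the strictly negative dissipative contribution from $\mathcal C_1(u,\rho)\approx -c\,\Lambda u$ evaluated at a near-extremal pair, which is $\le -c\,\mathcal D(\xi)$ where $\mathcal D(\xi)=\int_0^{\xi/2}\frac{\omega(\xi+2\eta)+\omega(\xi-2\eta)-2\omega(\xi)}{\eta^2}\,d\eta+\int_{\xi/2}^\infty\frac{\omega(\xi+2\eta)+\omega(\xi-2\eta)-2\omega(\xi)}{\eta^2}\,d\eta$. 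The delicate point, and what I expect to be the main obstacle, is that $\mathcal C_1(u,\rho)$ is \emph{not} exactly $-c\Lambda u$: it carries a genuine coupling term of the form $(\Lambda\rho)u-\Lambda(\rho u)$ whose commutator pieces mix the modulus of $u$ with that of $\rho$, so one must choose the two moduli — plausibly of Kiselev-type behaving like $\xi$ for small $\xi$ with a logarithmic correction (e.g. $\omega(\xi)\sim \xi(1+\log\tfrac1\xi)$ near $0$, flattening for large $\xi$) — in a coordinated way so that \emph{both} differential inequalities close simultaneously, with the small stock of smoothing in each equation bootstrapping the other. Once the moduli are propagated, they give uniform $C^{1-}$ bounds (in particular $\|\nabla u\|_{L^\infty}, \|\Lambda\rho\|_{L^\infty}$ bounds) that feed the higher $H^{m+1}$/$H^{m+\alpha}$ estimates through commutator (Kato--Ponce) inequalities, the $L^2_t\dot H^{m+1+\alpha/2}$ bound coming from the dissipation term in the energy identity; this closes the continuation criterion and yields the global solution. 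I would present $1<\alpha<2$ first (short, via known Hölder estimates), then devote the bulk of the argument to the modulus construction at $\alpha=1$.
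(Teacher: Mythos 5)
Your overall architecture is the right one, and you correctly anticipated the two signature ideas: propagate moduli of continuity on $\rho$ and $u$ \emph{simultaneously}, and identify the commutator difference between $\mathcal{C}_1(u,\rho)$ and $\rho\,\mathcal{C}_1(u,1)$ as the dangerous coupling term. But there are two concrete gaps that would prevent the argument from closing at $\alpha=1$, and one smaller misstep in the a priori bounds.

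\textbf{The modulus shape is wrong for the continuation criterion.} You propose $\omega(\xi)\sim\xi\bigl(1+\log\tfrac1\xi\bigr)$ near $0$, which has $\omega'(0^+)=+\infty$. Propagating such a modulus gives only a log-Lipschitz bound, whereas the blow-up criterion in Theorem~\ref{thm.local2} requires uniform control of $\|\nabla\rho\|_{L^\infty}$ and $\|\nabla u\|_{L^\infty}$. The paper's modulus $\bar\omega^{\delta,\mu}(\xi)=\delta\bigl(\xi-\tfrac14\xi^{1+\mu}\bigr)$ on $(0,1]$ is Lipschitz with $\omega'(0^+)=\delta$, and the preservation of the family $\omega_\lambda^{\delta,\mu}$ translates directly into the Lipschitz bounds \eqref{eq:rho-Lip}, \eqref{eq:u-Lip} that close the continuation criterion. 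With your modulus there is no such passage.

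\textbf{You name the need for ``coordination'' between the two moduli but not the mechanism.} The decisive quantitative ingredient in the critical case is the asymmetric choice $\delta_1=\kappa\delta_2$ with $\kappa\ll1$, i.e.\ $\omega_1=\kappa\,\omega_2$ (see \eqref{eq:del1-del2} and Remark~\ref{rmk:K}). The reason: the dangerous cross term $\mathcal{K}$ in the $u$-equation, which is exactly the difference $\mathcal{C}_1(u,\rho)-\rho\,\mathcal{C}_1(u,1)$, is estimated in Lemma~\ref{lem:K5} as $\mathcal{K}\lesssim (\delta_2+V_0)\,\omega_1(\xi)\xi^{-1}$ for $\xi>\lambda$, while the available dissipation scales like $\omega_2(\xi)\xi^{-1}$. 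The amplitude $V_0$ is a fixed, possibly large, quantity determined by the data; it cannot be made small. The only knob that makes $\mathcal{K}$ dominated is the ratio $\omega_1/\omega_2=\kappa$. Without discovering this asymmetry, the ``both inequalities close simultaneously'' step fails for general initial data — the $u$-estimate would require smallness of $V_0$, which is exactly the restriction the paper removes. Choosing $\omega_1=\omega_2$ (which is what a symmetric ``coordination'' suggests) does not work at $\alpha=1$.

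\textbf{Minor: the $\|G\|_{L^\infty}$ bound has no circularity.} You worry that $\partial_tG+\partial_{x_1}(Gu)=0$ is in divergence form so bounding $\|G\|_{L^\infty}$ requires $\|\partial_{x_1}u\|_{L^1_tL^\infty_x}$, and that ``this circularity is broken by the dissipation.'' The paper sidesteps this entirely: the quantity $F=G/\rho$ satisfies the pure transport equation $\partial_tF+u\,\partial_{x_1}F=0$, so $\|F(t)\|_{L^\infty}=\|F_0\|_{L^\infty}$ unconditionally, and then $\|G(t)\|_{L^\infty}\le\overline{\rho}\|F_0\|_{L^\infty}$ once the upper bound $\overline{\rho}$ on the density is obtained via the nonlinear maximum principle (Proposition~\ref{prop:MP-rho}). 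No input from the dissipation or from $\|\partial_{x_1}u\|$ is needed for this step.

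For $1<\alpha<2$ your plan to cite the Lear--Shvydkoy H\"older/Schauder theory is a legitimate alternative; the paper instead re-proves that range with the same MOC machinery for a unified presentation, but that is a stylistic choice, not a gap in your proposal.
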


When $\alpha\in(1,2)$, our theorem recasts the results presented in \cite{lear2021unidirectional}, but through an alternative approach based on the method of modulus of continuity.

The main contribution of this theorem lies in the critical case when $\alpha=1$. Our result establishes global regularity without imposing any smallness assumptions. Overcoming this challenge requires extracting sufficient dissipation from the alignment force $\mathcal{C}_1(u,\rho)$. A major difficulty arises from the system's invariance under the critical scaling:
\begin{equation}\label{eq:criticalscaling}
  \rho(x,t)\rightsquigarrow \rho(\lambda x,\lambda t),\quad
  u(x,t)\rightsquigarrow u(\lambda x,\lambda t), \quad \forall~\lambda >0.
\end{equation}
As a consequence, energy-based or scaling-based estimates alone are inadequate to ensure global regularity. Instead, we employ the method of modulus of continuity, which draws inspiration from the approach used in \cite{kiselev2008blow} on the critical fractal Burgers equation \eqref{eq:fBurgers}.

We would like to emphasize an additional major difficulty in the analysis. Unlike the linear fractional dissipation term $\mathcal{C}_1(u,1)=-\Lambda u$, the alignment force $\mathcal{C}_1(u,\rho)$ is highly nonlinear and dependent on the density $\rho$. In particular, the most dangerous term is the difference between $\mathcal{C}_1(u,\rho)$ and $\rho\,\mathcal{C}_1(u,1)$ namely
\begin{equation}\label{eq:difference}
	\mathcal{C}_1(u,\rho)-\rho\,\mathcal{C}_1(u,1)=c_\alpha\mathrm {p.v.}\int_{\R^d}\frac{(\rho(y)-\rho(x))(u(y)-u(x))}{|x-y|^{d+1}}\,\dd x,
\end{equation}
which cannot be solely controlled by the linear dissipation $-\Lambda u$. Additional a priori control on the regularity of $\rho$ is required. However, utilizing the relation
\begin{equation}\label{eq:rhorelation}
 \rho = \partial_{x_1}\Lambda^{-1}u-\Lambda^{-1}G
\end{equation}
does not provide a sufficient bound. The main obstacle is the lack of $L^\infty$ to $L^\infty$ bound for the Reisz transform $\partial_{x_1}\Lambda^{-1}$. Consequently, standard approaches employed in \cite{do2018global,kiselev2008blow,lear2021unidirectional} do not yield the desired global well-posedness result.

To overcome this difficulty, we propose a new concept of \emph{simultaneously propagating two moduli of continuity}. In addition to propagating the modulus of continuity on $u$, as done in \cite{kiselev2008blow}, we simultaneously propagate a modulus of continuity on $\rho$ through the equation \eqref{eq:Euni}. The key lies in smartly choosing a modulus of continuity for $\rho$ that is stronger than what can be obtained solely through the relation \eqref{eq:rhorelation}. This choice allows us to achieve sufficient control over the term \eqref{eq:difference}.

We believe that this new approach represents an extension of the method of modulus of continuity and opens up possibilities for studying \emph{systems} of equations with critical scalings. By simultaneously propagating multiple moduli of continuity, we can effectively handle the nonlinear interactions and dependencies in the system, leading to the desired global well-posedness results. This innovative approach may pave the way for further developments in the analysis of critical systems and their regularity properties.

Our next result concerns the asymptotic \emph{flocking} behavior of solutions to \eqref{eq:Euni}. This phenomenon has been extensively studied in the general context of the Euler-alignment system \eqref{eq.EAS} (see, e.g., \cite{lear2021global,lear2022geometric,lear2021unidirectional, lear2022existence, leslie2019structure, leslie2023sticky,shvydkoy2017eulerian2, tadmor2014critical}).
In particular, the global solution tends to exhibit certain collective behavior. Specifically, the velocity $u$ converges to its average value $\bar{u}$, given by
\begin{equation}\label{def:bar-u}
\bar{u}:=\frac{\int_\T(\rho_0u_0)(x)\mathrm dx}{\int_\T\rho_0(x)\mathrm dx},
\end{equation}
while the density profile tends to a traveling wave flocking state:
\begin{equation*}
\rho(x,t)\rightarrow\rho_{\infty}(x-\bar{u}t).
\end{equation*}

We establish the following result:
\begin{theorem}[Asymptotic behavior]\label{thm.flocking}
Let $(\rho,u)$ be the global solution to \eqref{eq:Euni} as guaranteed by Theorem \ref{thm.main}. Then we have
\begin{align}\label{eq:flock1}
  \| u(t)-\bar{u}\|_{W^{1,\infty}}
  \leq Ce^{-c_0\, t},\quad \forall~ t>0,
\end{align}
where $\bar{u}$ is defined as in \eqref{def:bar-u}, and the rate $c_0>0$ depends only on $\alpha,d$ and $\bar{u}$.
Consequently, there exist $\rho_\infty\in H^{m+\alpha}(\T^d)$ such that
\begin{align}\label{eq:flock2}
  \| \rho(\cdot,t)-\rho_\infty(\cdot-\bar{u}t)\|_{C^\beta}
  \leq Ce^{-c_0\, t},\quad \forall~ t>0,\,\,0< \beta <1,
\end{align}
\end{theorem}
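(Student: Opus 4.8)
The plan is to exploit the conservation law for $G = \partial_{x_1} u - \Lambda^\alpha \rho$ together with the fact, already available from Theorem~\ref{thm.main}, that $(\rho,u)$ is a global smooth non-vacuous solution with uniform-in-time bounds on the relevant Sobolev norms. First I would record the conserved integral structure: since $\partial_t\rho + \partial_{x_1}(\rho u) = 0$ on $\T^d$, the total mass $\int_{\T^d}\rho\,\dd x$ is conserved, and since $\partial_t(\rho u) + \partial_{x_1}(\rho u^2) = \rho\,\mathcal C_\alpha(u,\rho) - u\,\partial_{x_1}(\rho u)$, one checks after integrating the commutator (which is antisymmetric, so $\int_{\T^d}\mathbf u\cdot[\mathcal L_\phi,\mathbf u]\rho = 0$ in the scalar uni-directional case reduces to $\int \rho\,\mathcal C_\alpha(u,\rho)\,\dd x = 0$) that $\int_{\T^d}\rho u\,\dd x$ is conserved. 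Hence $\bar u$ in \eqref{def:bar-u} is the time-invariant mean momentum per unit mass. Replacing $u$ by $u-\bar u$ (which leaves \eqref{eq:Euni} invariant, shifting the transport), I reduce to the case $\bar u = 0$ and must show exponential decay of $\|u(t)\|_{W^{1,\infty}}$.

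The core of the argument is an energy–dissipation estimate exhibiting a spectral gap. Testing $\eqref{eq:Euni}_2$ against $\rho u$ and using the mass equation to absorb the transport term, I get
\begin{equation*}
\frac{\dd}{\dd t}\int_{\T^d}\rho u^2\,\dd x = 2\int_{\T^d}\rho u\,\mathcal C_\alpha(u,\rho)\,\dd x = -c_\alpha\iint_{\T^d\times\T^d}\frac{\rho(x)\rho(y)\,|u(x)-u(y)|^2}{|x-y|^{d+\alpha}}\,\dd x\,\dd y,
\end{equation*}
which is the standard alignment energy identity. Since $\rho$ is bounded above and strictly bounded below uniformly in time (non-vacuous, by Theorem~\ref{thm.main} and the maximum-principle bounds obtained in its proof), the right side controls $-c\,\|u-\langle u\rangle_\rho\|_{\dot H^{\alpha/2}}^2$ for a $\rho$-weighted mean; combined with the conservation $\int\rho u=0$ and a weighted Poincaré inequality on $\T^d$, this yields $\frac{\dd}{\dd t}\int\rho u^2 \le -c_0\int\rho u^2$, hence $\int_{\T^d}\rho u^2\,\dd x \le C e^{-c_0 t}$ and so $\|u(t)\|_{L^2}\le Ce^{-c_0 t/2}$. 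Upgrading $L^2$ decay to $W^{1,\infty}$ decay is done by interpolation: the uniform-in-time bound $\|u(t)\|_{H^{m+1}}\le C$ from Theorem~\ref{thm.main}, with $m>\tfrac d2+1$, gives via Gagliardo–Nirenberg $\|u(t)\|_{W^{1,\infty}}\lesssim \|u(t)\|_{L^2}^{\theta}\|u(t)\|_{H^{m+1}}^{1-\theta}$ for some $\theta\in(0,1)$, producing \eqref{eq:flock1} with a (possibly smaller) rate, which after relabeling I still call $c_0$.

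For \eqref{eq:flock2} I would pass to the moving frame $z = x - \bar u t$ and set $\tilde\rho(z,t) = \rho(z+\bar u t, t)$, which solves $\partial_t\tilde\rho = -\partial_{x_1}\big(\tilde\rho\,(u-\bar u)\big)$ evaluated at the shifted argument. Then $\|\partial_t\tilde\rho(t)\|_{C^\beta}\lesssim \|\tilde\rho(t)\|_{C^{1}}\|u(t)-\bar u\|_{C^\beta} + \|\tilde\rho(t)\|_{C^\beta}\|u(t)-\bar u\|_{C^1} \lesssim e^{-c_0 t}$, using the uniform $H^{m+\alpha}$ bound on $\rho$ (hence uniform $C^1$ bound, since $m+\alpha > \tfrac d2 + 1$) and the just-proved decay of $u-\bar u$. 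Therefore $\tilde\rho(t)$ is Cauchy in $C^\beta$ as $t\to\infty$, converging to some $\rho_\infty$, and integrating the exponential bound from $t$ to $\infty$ gives the stated rate; that $\rho_\infty\in H^{m+\alpha}$ follows from weak-$*$ compactness of the uniformly bounded family $\{\tilde\rho(t)\}$ in $H^{m+\alpha}$ together with lower semicontinuity of the norm. The main obstacle I anticipate is justifying the weighted Poincaré/spectral-gap step cleanly: one must ensure the coercivity constant depends only on $\alpha,d$ and the uniform density bounds (ultimately only on $\bar u$ and the data through the a priori estimates), and one must handle the $\rho$-weighted mean versus the conserved momentum carefully so that the decay constant is genuinely uniform in time rather than degenerating; the interpolation and moving-frame steps are then routine.
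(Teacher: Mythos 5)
Your proposal takes a genuinely different route from the paper. The paper obtains \eqref{eq:flock1} as a direct byproduct of the modulus-of-continuity argument used to prove Theorem~\ref{thm.main}: it propagates the \emph{time-dependent} modulus $\omega_2(\xi,t)=e^{-c_0 t}\omega_2(\xi)$ (see \eqref{def.omeg2t}) on $u$, where $c_0$ is the rate from Lemma~\ref{lem:exp-decay}. Once the breakthrough scenario is ruled out, \eqref{eq:u-Lip} immediately yields $\|\nabla u(t)\|_{L^\infty}\leq \delta_2\lambda^{-1}e^{-c_0 t}$, and combining with Lemma~\ref{lem:exp-decay} gives the $W^{1,\infty}$ decay at the exact rate $c_0$. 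Your plan instead uses the alignment energy identity, a weighted Poincar\'e inequality to get $L^2$ decay, and Gagliardo--Nirenberg interpolation against a uniform-in-time $H^{m+1}$ bound to upgrade to $W^{1,\infty}$. This is conceptually simpler and more standard; the cost is that it yields only a reduced rate $\theta c_0/2$ after interpolation, whereas the paper obtains the undegraded rate $c_0$, which matters because $c_0$ is later reused as the exact coefficient in the propagated modulus $\omega_2(\xi,t)$.

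There is, however, a genuine gap in the interpolation step. You invoke ``the uniform-in-time bound $\|u(t)\|_{H^{m+1}}\leq C$ from Theorem~\ref{thm.main},'' but Theorem~\ref{thm.main} only asserts $u\in C_w(\R_+;H^{m+1})$, which guarantees boundedness on compact time intervals but \emph{not} a bound uniform over all $t>0$. To obtain a genuinely uniform Sobolev bound one needs $\int_0^\infty\|\nabla u(t)\|_{L^\infty}\dd t<\infty$ (e.g. exponential decay of $\|\nabla u\|_{L^\infty}$) to close the higher-order energy estimates without exponential growth --- and this integrability is essentially the conclusion you are trying to establish, so the argument as written is circular. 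You can repair it by noting that the paper's proof of Theorem~\ref{thm.main} already propagates the time-dependent modulus \eqref{def.omeg2t}, hence already supplies \eqref{eq:u-Lip} and the integrability of $\|\nabla u\|_{L^\infty}$; but at that point the $W^{1,\infty}$ decay \eqref{eq:flock1} is already in hand and the energy/interpolation machinery is redundant. Your treatment of \eqref{eq:flock2} via the moving frame and Cauchy-in-$C^\beta$ argument is fine and matches the standard route the paper references.
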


The exponential decays observed in \eqref{eq:flock1} and \eqref{eq:flock2} are commonly referred to as \emph{strong flocking}. This result has already been established and documented in the literature, for instance in \cite{lear2021global,lear2021unidirectional}.

In our analysis, we introduce a time-dependent modulus of continuity on $u$, inspired by the approach presented in \cite{kiselev2011nonlocal}. This innovative technique enables us to derive the asymptotic behavior \eqref{eq:flock1} as a complementary result to the global well-posedness theorem. 

Our final result focuses on the system \eqref{eq:Euni} with $\alpha\in(0,1)$. In the context of the fractional Burgers equation \eqref{eq:fBurgers}, it is well-known that the dissipation term $\mathcal{C}_\alpha(u,1)$ is not sufficiently strong to prevent the formation of singularities within finite time. However, a remarkable discovery in \cite{do2018global} demonstrated that the alignment force $\mathcal{C}_\alpha(u,\rho)$, which incorporates the density $\rho$ as a weight, actually enhances the dissipation for the one-dimensional Euler-alignment system, yielding global regularity. The natural question that arises is whether a similar phenomenon can be observed in multi-dimensional systems. Specifically, for the uni-directional flow described by \eqref{eq:Euni}, it remains unclear whether the enhanced dissipation effect occurs solely in the $x_1$ direction, as suggested by \eqref{eq:G0}.

Although it is uncertain whether the dissipation induced by $\mathcal{C}_\alpha(u,\rho)$ can surpass that of $\mathcal{C}_\alpha(u,1)$, our subsequent result demonstrates that they are at least comparable. The following theorem provides a refined regularity criterion for the system \eqref{eq:Euni} when $\alpha\in(0,1)$.

\begin{theorem}[Regularity criterion]\label{thm:reg-cr}
Let $0<\alpha<1$. and $(\rho_0,u_0)\in H^{m+\alpha}(\T^d)\times H^{m+1}(\T^d)$, where $m>\frac d2+1$ and $\rho_0(x)>0$.
Let $T^*>0$ be the maximum existence time of the smooth solution for the uni-directional Euler-alignment system \eqref{eq:Euni} constructed in Theorem \ref{thm.local2}.
Then provided that
\begin{align}\label{eq:reg-cr}
  \sup_{t\in[0,T^{*})}\|u(t)\|_{ C^\sigma(\T^d)} < \infty,\quad \text{for some } \sigma\in(1-\alpha ,1),
\end{align}
we necessarily have $T^* = \infty$. Moreover, we obtain the following Lipschitz bounds:
\begin{align}\label{eq:rho-Lip0}
  \|\nabla \rho(t)\|_{L^\infty} \leq C \Big(1+ \|u\|_{L^\infty(\R_+;C^\sigma(\T^d))}^{\frac{1}{\sigma-1+\alpha}} \Big),\quad \forall t>0,
\end{align}
and
\begin{align}\label{eq:u-Lip0}
  \|\nabla u(t)\|_{L^\infty} \leq C \Big(1+ \|u\|_{L^\infty(\R_+;C^\sigma(\T^d))}^{\frac{1}{\sigma-1+\alpha}} \Big) e^{-c_0 t},
  \quad \forall t>0,
\end{align}
where $C>0$ depends only on $\alpha$, $d$, and initial data $(\rho_0,u_0)$;
and the rate $c_0 >0$ is the same as in Theorem \ref{thm.flocking}.
\end{theorem}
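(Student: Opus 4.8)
\textbf{Proof proposal for Theorem \ref{thm:reg-cr}.}

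The plan is to show that the assumed uniform $C^\sigma$ bound on $u$, with $\sigma > 1-\alpha$, is strong enough to close a Lipschitz estimate on both $\rho$ and $u$, which by the local existence theory of Theorem \ref{thm.local2} and standard continuation arguments forces $T^* = \infty$. The starting point is the observation already highlighted in the introduction: the truly dangerous contribution in the velocity equation is the difference $\mathcal{C}_\alpha(u,\rho) - \rho\,\mathcal{C}_\alpha(u,1)$, given by the symmetric double-difference integral in \eqref{eq:difference} (with $d+1$ replaced by $d+\alpha$), while $\rho\,\mathcal{C}_\alpha(u,1) = -\rho\,\Lambda^\alpha u$ provides a genuine dissipative term of order $\alpha$ because $\rho$ stays bounded below away from zero (this lower bound on $\rho$ should be propagated first, via the maximum principle on $\partial_t\rho + u\,\partial_{x_1}\rho = -\rho\,\Lambda^\alpha\rho$ together with the $C^\sigma$ control of $\Lambda^\alpha \rho$ — note $\alpha < \sigma$ here so $\Lambda^\alpha\rho$ is controlled by $\|\rho\|_{C^\sigma}$).

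First I would differentiate the $\rho$-equation. Writing $w = \partial_{x_1}\rho$ (or, cleaner, tracking $\nabla\rho$ and evaluating at a maximum point of $|\nabla\rho|$ — here one uses the flow-map/Lagrangian formulation so that the transport term drops out), one gets $\partial_t(\nabla\rho) + u\,\partial_{x_1}(\nabla\rho) = -(\nabla u^1)\,\partial_{x_1}\rho - (\nabla\rho)\,\Lambda^\alpha\rho - \rho\,\Lambda^\alpha(\nabla\rho)$. At a point where $|\nabla\rho|$ attains its spatial maximum, the term $-\rho\,\Lambda^\alpha(\nabla\rho)$ has a favorable sign and in fact yields a dissipative gain of the form $-c\,\rho\,\|\nabla\rho\|_{L^\infty}^{1+\alpha/\sigma}/\|\rho\|_{C^\sigma}^{\alpha/\sigma}$ by the standard nonlinear-maximum-principle lower bound (as in Constantin–Vicol); the bad term $-(\nabla u^1)\,\partial_{x_1}\rho$ is bounded by $\|\nabla u\|_{L^\infty}\,\|\nabla\rho\|_{L^\infty}$. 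One then needs to recover $\|\nabla u\|_{L^\infty}$. Crucially, by \eqref{eq:G} the quantity $G = \partial_{x_1}u - \Lambda^\alpha\rho$ is transported, so $\|G(t)\|_{L^\infty} \le \|G_0\|_{L^\infty}$; hence $\|\partial_{x_1}u\|_{L^\infty} \le \|G_0\|_{L^\infty} + \|\Lambda^\alpha\rho\|_{L^\infty}$, and since $\alpha < \sigma$ we have $\|\Lambda^\alpha\rho\|_{L^\infty} \lesssim \|\rho\|_{C^\sigma} \lesssim (\|\nabla\rho\|_{L^\infty})^{\theta}$-type interpolation — more precisely $\|\Lambda^\alpha\rho\|_{L^\infty}$ is controlled by $\|\nabla\rho\|_{L^\infty}^{\alpha}\|\rho\|_{L^\infty}^{1-\alpha}$ up to logarithms, so it sits at a strictly lower power than the dissipative gain. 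The remaining components $\partial_{x_j}u$ for $j\ge 2$ are the genuinely subtle ones: here I would go back to the $u$-equation itself and run a Lipschitz estimate on $\nabla u$ using the dissipation $-\rho\,\Lambda^\alpha u$, where the error term \eqref{eq:difference} differentiated once is bounded, using the $C^\sigma$ assumption on $u$ and the Lipschitz bound on $\rho$, by $\|\rho\|_{C^1}\|u\|_{C^\sigma}\cdot(\text{something of order} < \alpha \text{ in } \nabla u)$ — this is exactly where $\sigma + (\text{regularity of }\rho\text{ used}) > \alpha$, equivalently $\sigma > 1-\alpha$ after investing the full Lipschitz regularity of $\rho$, is needed for the dissipation to dominate.

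\emph{The main obstacle} I anticipate is precisely controlling the differentiated commutator term $\nabla\big(\mathcal{C}_\alpha(u,\rho) - \rho\,\mathcal{C}_\alpha(u,1)\big)$ and showing it can be absorbed by the $\Lambda^\alpha$-dissipation acting on $\nabla u$ (with the $\rho$-weight) together with the dissipation acting on $\nabla\rho$. One must split the singular integral into a near-diagonal piece (where one distributes derivatives and uses that the product of a $C^1$ factor and a $C^\sigma$ factor beats the $|x-y|^{-(d+\alpha)}$ kernel when $1 + \sigma > \alpha$, which holds) and a far piece (easily bounded by lower-order norms), and carefully track that the resulting power of $\|\nabla u\|_{L^\infty}$ is strictly below $1 + \alpha/(\text{Hölder exponent})$ so Young's inequality closes the bootstrap. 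Once the coupled differential inequality
\[
\frac{\dd}{\dd t}\big(\|\nabla\rho\|_{L^\infty} + \|\nabla u\|_{L^\infty}\big) \le C\big(1 + \|u\|_{C^\sigma}\big)^{p}\big(1 + \|\nabla\rho\|_{L^\infty} + \|\nabla u\|_{L^\infty}\big) - c\,\big(\|\nabla\rho\|_{L^\infty} + \|\nabla u\|_{L^\infty}\big)^{1+\delta}
\]
is established (with $\delta>0$ and $p = 1/(\sigma-1+\alpha)$ read off from the interpolation exponents), one concludes a uniform-in-time Lipschitz bound of the form \eqref{eq:rho-Lip0}, which propagates to all higher Sobolev norms by the energy estimates underlying Theorem \ref{thm.local2}, giving $T^*=\infty$. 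Finally, for the exponential decay \eqref{eq:u-Lip0} I would combine the now-established global Lipschitz bound with the flocking mechanism of Theorem \ref{thm.flocking}: the estimate \eqref{eq:flock1} gives $\|u(t)-\bar u\|_{W^{1,\infty}} \le C e^{-c_0 t}$ directly once global regularity is in hand, and since $\nabla u = \nabla(u-\bar u)$ this is exactly \eqref{eq:u-Lip0}; the rate $c_0$ is inherited unchanged from Theorem \ref{thm.flocking}.
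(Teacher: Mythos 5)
Your proposal departs from the paper's route entirely: the paper does not differentiate the equations or run a nonlinear maximum principle on $\nabla\rho$ and $\nabla u$. Instead it re-uses the modulus-of-continuity (MOC) machinery of Sections~\ref{sec:gwp}--\ref{sec:crit}, simultaneously propagating $\omega_1(\xi)=\omega_\lambda^{\delta_1,\mu}(\xi)$ on $\rho$ and the time-dependent $\omega_2(\xi,t)=e^{-c_0 t}\omega_\lambda^{\delta_2,\mu}(\xi)$ on $u$; the only place the hypothesis \eqref{eq:reg-cr} is invoked is to replace the bound $|u(x)-u(y)|\le\omega_2(\xi)$ used for the advection term by $|u(x)-u(y)|\le\|u\|_{C^\sigma}\xi^\sigma$, which (after setting $\mu=\tfrac{\sigma-(1-\alpha)}{2}$) makes the advection contribution scaling-subcritical and hence absorbable by the dissipation once $\lambda\sim\|u\|_{L^\infty_t C^\sigma}^{-1/(\sigma-1+\alpha)}$. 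This is a different and arguably cleaner mechanism than the coupled differential inequality you sketch, and it is precisely what produces the exponents in \eqref{eq:rho-Lip0}--\eqref{eq:u-Lip0}.

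Beyond being a different route, your proposal has a genuine gap at its central step. Differentiating $\mathcal{C}_\alpha(u,\rho)$ in $x_j$ produces the error term
\[
  c_\alpha\,\mathrm{p.v.}\int_{\R^d}\frac{\big(u(x+z)-u(x)\big)\,\partial_{x_j}\rho(x+z)}{|z|^{d+\alpha}}\,\dd z,
\]
and near $z=0$ the integrand is of size $\|u\|_{C^\sigma}\|\nabla\rho\|_{L^\infty}|z|^{\sigma-d-\alpha}$. This singular integral is absolutely convergent only when $\sigma>\alpha$. In the regime $\alpha\in(\tfrac12,1)$, the hypothesis allows $\sigma\in(1-\alpha,\alpha)$, where $\sigma<\alpha$, so your near-diagonal estimate does not even give a finite quantity, let alone one absorbable by the $\Lambda^\alpha$-dissipation on $\nabla u$. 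Your heuristic ``$\sigma+(\text{regularity of }\rho)>\alpha$, equivalently $\sigma>1-\alpha$'' is not actually an equivalence (with $\rho$ Lipschitz the left side is $\sigma+1>\alpha$, which is vacuous), and it does not identify the correct threshold; the threshold $\sigma>1-\alpha$ comes from the invariance of \eqref{eq:Euni} under the scaling \eqref{eq:scaling}, under which $\|u\|_{\dot C^\sigma}$ transforms by $\lambda^{\sigma-(1-\alpha)}$, and the MOC argument exploits this directly. There is also an unresolved circularity in your bootstrap: the very Lipschitz bound on $\rho$ you ``invest'' into controlling the commutator is one of the conclusions, and you do not verify that the resulting exponents in your coupled inequality in fact close (your $\delta>0$ and $p$ are not derived). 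The paper sidesteps all of this by never differentiating the commutator: the MOC argument operates at the level of finite differences of $u$ and $\rho$, for which the required control (Lemmas~\ref{lem:Aalp}, \ref{lem:K5}) is available.

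Finally, your appeal to Theorem~\ref{thm.flocking} for the decay \eqref{eq:u-Lip0} is not quite legitimate as stated: Theorem~\ref{thm.flocking} is proved only for $\alpha\in[1,2)$, as part of Theorem~\ref{thm.main}. In the paper, the decay is instead obtained automatically because the MOC on $u$ is chosen time-dependent from the outset, $\omega_2(\xi,t)=e^{-c_0 t}\omega_2(\xi)$, and the breakthrough analysis (Lemma~\ref{lem.brk}, inequality~\eqref{aim-equi}) is carried out with respect to this decaying family, directly yielding $\|\nabla u(t)\|_{L^\infty}\le\delta_2\lambda^{-1}e^{-c_0 t}$.
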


A regularity criterion has been established in \cite{lear2021global}, which is stated in \eqref{blow0} and asserts that solutions remain smooth if both $\rho$ and $u$ are Lipschitz continuous. In comparison, our regularity criterion \eqref{eq:reg-cr} imposes a less stringent condition by requiring only H\"older continuity of $u$. This represents a significant improvement in terms of the regularity requirement.

We would like to emphasize that the system \eqref{eq:Euni} exhibits an invariance property under the scaling transformation
\begin{equation}\label{eq:scaling}	
  \rho(x,t)\rightsquigarrow \rho(\lambda x,\lambda^\alpha t),\quad
  u(x,t)\rightsquigarrow \lambda^{-(1-\alpha)}u(\lambda x,\lambda^\alpha t), \quad \forall~\lambda >0.
\end{equation}
Consequently, the criterion \eqref{eq:reg-cr} only necessitates that $u$ belongs to a slightly smoother space compared to the scale-invariant class $L^\infty(\R_+; C^{1-\alpha}(\T^d))$. Our result shares similarities with the works of Constantin and Wu \cite{constantin2008regularity} on the supercritical quasi-geostrophic equation and Silvestre \cite{silvestre2012differentiability} on the advection-diffusion equation. We employ the same modulus of continuity method to obtain our result.
However, we have not attempted to extend our regularity criterion \eqref{eq:reg-cr} to the case of $u\in L^\infty(\R_+; C^{1-\alpha}(\T^d))$. If this were the case, one would expect that $\rho$ becomes H\"older continuous \cite{silvestre2012holder}. Further regularization of the solution is possible. See relevant discussion in Remark \ref{rmk:supcrit}.

It is worth noting that the regularity criterion \eqref{eq:reg-cr} is also expected to hold for the fractional Burgers equation \eqref{eq:fBurgers}, as it satisfies the same scaling \eqref{eq:scaling}. Moreover, in \cite{kiselev2008blow}, solutions were constructed in such a way that the regularity criterion fails in finite time, resulting in the development of singularities. However, it remains unclear whether such blow-up phenomena occur in the context of the uni-directional Euler-alignment system \eqref{eq:Euni}. This intriguing question will serve as the focus of future investigations.

The outline of our paper is as follows.
In Section \ref{sec:prel}, we present the local well-posedness result for system \eqref{eq:Euni} and establish some fundamental a priori bounds for the quantities $(\rho,u)$ and the auxiliary quantity $G$.
Our general approach revolves around the method of modulus of continuity (MOC). In Subsection \ref{subsec:MOC}, we set up a framework for simultaneously propagating the MOCs of $\rho$ and $u$, while also identifying potential breakdown scenarios that could violate their preservation. In Subsections \ref{subsec:GE-rho} and \ref{subsec:GE-u}, we demonstrate the general estimates for the evolution of the MOCs by density $\rho$ and velocity $u$, respectively, covering the entire range of $0<\alpha <2$ under possible breakdown scenarios.
Then, in Sections \ref{sec:thm-sub}, \ref{sec:crit}, and \ref{sec:reg-cr}, we respectively prove that the breakdown scenarios cannot occur in the subcritical ($1< \alpha <2$), critical ($\alpha=1$), and supercritical ($0<\alpha<1$) regimes. For the critical regime, we carefully select a pair of MOCs for $\rho$ and $u$ to avoid the occurrence of breakdown scenarios. The preservation of MOCs implies the uniform Lipschitz regularity of $(\rho,u)$, leading to the proofs of Theorems \ref{thm.main}, \ref{thm.flocking}, and \ref{thm:reg-cr}.
Finally, we provide the proofs of two auxiliary lemmas in the appendix section.

\vskip1mm
\noindent$\textbf{Notations}$:
For convenience, we sometimes use $\R^d$ instead of $\T^d$ by periodically extending the domain to the whole space.
The constant $C$  may be different from line to line, and the notation $a\lesssim b$ means $a\leq Cb$.

\section{Preliminaries}\label{sec:prel}
In this section, we state a collection of known results on the unidirectional Euler-alignment system \eqref{eq:Euni} in the existing literature. The 1D theory was established in \cite{do2018global,shvydkoy2017eulerian,shvydkoy2017eulerian2}, and the multi-dimensional case was discussed in \cite{lear2021unidirectional}.

\subsection{Local well-posedness}
We begin with the local well-posedness result for smooth solutions to the Euler-alignment system \eqref{eq:Euni}.
\begin{theorem}[Local well-posedness]\label{thm.local2}
Let $0<\alpha<2$. Suppose that $m>\frac{d}{2} + 1$ and
\begin{align*}
  (\rho_0, u_0)\in H^{m+\alpha}(\T^d)\times H^{m+1}(\T^d),
\end{align*}
with $\rho_0(x)>0$. Then there exists a $T_0>0$ such that the Euler-alignment system \eqref{eq:Euni} with initial data $(\rho_0,u_0)$ has a unique non-vacuous solution $(\rho,u)$ on interval $[0,T_0 )$ in the class
\begin{align*}
 \rho\in C_w([0,T_0); H^{m+\alpha}(\T^d)),\quad u\in C_w([0,T_0); H^{m+1}(\T^d))\cap L^2 ([0,T_0); \dot H^{m+1+\frac\alpha2}(\T^d)).
\end{align*}
Moreover, let $T^*>0$ be the maximal existence time of the above constructed solution, then
\begin{align}\label{blow0}
  \textrm{if}\;\; T^*<\infty,\quad \Longrightarrow \quad
  \sup_{t\in [0,T^*)}\big(\|\nabla \rho(t)\|_{L^\infty}+\|\nabla u(t)\|_{L^\infty}\big) = \infty.
\end{align}
\end{theorem}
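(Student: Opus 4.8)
\textbf{Proof proposal for Theorem \ref{thm.local2}.}

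The plan is to obtain the local existence via energy estimates on the system \eqref{eq:Euni}, combined with a standard mollification / iteration scheme, and then establish the continuation criterion \eqref{blow0} by propagating the high-order Sobolev norms under a Lipschitz assumption on $(\rho, u)$. First I would set up the approximate system by mollifying the data and the nonlinearities (for instance via a Friedrichs mollifier $J_\epsilon$), producing a family of global smooth solutions $(\rho_\epsilon, u_\epsilon)$ of ODEs in a suitable Banach space; the a priori estimates below then yield a uniform time of existence $T_0 > 0$ and uniform bounds, after which a compactness argument (Aubin--Lions) produces a limiting solution in the stated class, with $C_w$ regularity coming from the standard Lions--Magenes interpolation argument and the $L^2_t \dot H^{m+1+\alpha/2}$ bound coming from the dissipative part of the $u$-equation.

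The core is the a priori estimate. For the density I would apply $\Lambda^{m+\alpha}$ (or $\partial^s$ with $|s| = m+\alpha$, reading $m+\alpha$ as a sum of an integer and fractional part and using the appropriate commutator estimates) to $\eqref{eq:Euni}_1$, pair with $\Lambda^{m+\alpha}\rho$, and use the Kato--Ponce commutator estimate to bound $\int \big(\Lambda^{m+\alpha}\partial_{x_1}(\rho u) - u\, \partial_{x_1}\Lambda^{m+\alpha}\rho\big) \Lambda^{m+\alpha}\rho\,\dd x$ by $C(\|\nabla u\|_{L^\infty} + \|\nabla \rho\|_{L^\infty})(\|\rho\|_{H^{m+\alpha}}^2 + \|u\|_{H^{m+1}}^2)$; the remaining term $\int u\,\partial_{x_1}\Lambda^{m+\alpha}\rho \cdot \Lambda^{m+\alpha}\rho\,\dd x = -\tfrac12\int (\partial_{x_1}u)(\Lambda^{m+\alpha}\rho)^2\,\dd x$ is controlled by $\|\partial_{x_1}u\|_{L^\infty}\|\rho\|_{H^{m+\alpha}}^2$. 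For the velocity I would similarly apply $\Lambda^{m+1}$ to $\eqref{eq:Euni}_2$ and pair with $\Lambda^{m+1}u$; the Burgers term is handled as above, and the key point is to extract the dissipation from the commutator $\mathcal{C}_\alpha(u,\rho) = -\Lambda^\alpha(\rho u) + (\Lambda^\alpha \rho)u$. Writing $\mathcal{C}_\alpha(u,\rho) = -\rho\,\Lambda^\alpha u + \big(\mathcal{C}_\alpha(u,\rho) - \rho\,\mathcal{C}_\alpha(u,1)\big)$ — recall $\mathcal{C}_\alpha(u,1) = -\Lambda^\alpha u$ — the first piece produces, after commuting $\Lambda^{m+1}$ past the multiplication by $\rho$ and using $\rho \geq \rho_{\min} > 0$ (which is propagated from the transport structure of $\eqref{eq:Euni}_1$), a good term $-c\,\rho_{\min}\|u\|_{\dot H^{m+1+\alpha/2}}^2$ plus lower-order commutator errors; the second piece is the symmetric double-difference quantity in \eqref{eq:difference}, which is $\alpha$-order smoothing in an averaged sense and, after applying $\Lambda^{m+1}$ and pairing, is absorbed into a fraction of the good term plus $C(\|\rho\|_{H^{m+\alpha}}^2 + \|u\|_{H^{m+1}}^2)^{p}$ via a fractional Leibniz / bilinear estimate together with the Sobolev embedding $H^{m+1}\hookrightarrow W^{1,\infty}$ and $H^{m+\alpha}\hookrightarrow W^{1,\infty}$ (valid since $m > d/2+1$). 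Summing, one gets $\tfrac{d}{dt}\big(\|\rho\|_{H^{m+\alpha}}^2 + \|u\|_{H^{m+1}}^2\big) + c\,\rho_{\min}\|u\|_{\dot H^{m+1+\alpha/2}}^2 \leq C\,\Phi\big(\|\rho\|_{H^{m+\alpha}}^2 + \|u\|_{H^{m+1}}^2\big)$ for a locally bounded $\Phi$, giving local existence by a Gronwall/bootstrap argument, and simultaneously showing that the norm can only blow up if $\int_0^{T^*}(\|\nabla\rho\|_{L^\infty} + \|\nabla u\|_{L^\infty})\,\dd t = \infty$, which yields \eqref{blow0} once we upgrade the time integral to a supremum by the usual logarithmic (Beale--Kato--Majda-type) refinement — or simply state \eqref{blow0} in its given form, since $\sup_{[0,T^*)}(\|\nabla\rho\|_{L^\infty}+\|\nabla u\|_{L^\infty}) < \infty$ certainly makes the right-hand side integrable.

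Uniqueness follows by a standard energy estimate on the difference of two solutions in a low norm (say $L^2 \times H^{1-\alpha/2}$ or $L^2\times L^2$ adapted to the dissipation), using the Lipschitz bounds already available for each solution to close a Gronwall inequality; the lower bound $\rho(x,t) \geq \rho_{\min}(t) > 0$ (non-vacuity) is preserved because along characteristics $\dot\rho = -\rho\,\partial_{x_1}u$, so $\rho$ stays positive on any interval where $\partial_{x_1}u$ is bounded. The main obstacle I anticipate is the careful treatment of the density-dependent commutator $\mathcal{C}_\alpha(u,\rho)$ at top order: one must show the term \eqref{eq:difference}, after differentiating $m+1$ times and testing against $\Lambda^{m+1}u$, does not overwhelm the dissipation $-c\,\rho_{\min}\|u\|_{\dot H^{m+1+\alpha/2}}^2$ — this requires a clean bilinear commutator estimate of the form $\|\Lambda^{m+1}\mathcal{C}_\alpha(u,\rho) - (\text{good term}) \|_{L^2} \lesssim (\|\rho\|_{H^{m+\alpha}} + \|u\|_{H^{m+1}})\,(\|\rho\|_{C^1} + \|u\|_{C^1})$, exploiting the fact that the kernel $|x-y|^{-(d+\alpha)}$ times the \emph{product} of two differences gains $\alpha$ derivatives distributed between the two factors, so that one never needs more than $m+\alpha$ derivatives on $\rho$ or more than $m+1+\alpha/2$ (in $L^2_t$) on $u$. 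This is essentially the estimate already carried out in \cite{lear2021unidirectional} for $\alpha \in (1,2)$, and it goes through verbatim for all $\alpha\in(0,2)$ at this high-regularity level; the delicate low-regularity issues are exactly what Theorems \ref{thm.main}--\ref{thm:reg-cr} are designed to address, and are not needed here.
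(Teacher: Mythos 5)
The paper does not actually prove Theorem~\ref{thm.local2}: immediately after the statement it says the proof can be found in \cite[Theorem 1.1]{lear2021unidirectional} and omits the details. So there is no ``paper's own proof'' to compare against line by line. That said, your sketch is a faithful reconstruction of the standard energy-method argument used in that reference: mollify to produce approximate solutions, run high-order Sobolev estimates via Kato--Ponce-type commutators on both equations, extract coercivity from the decomposition $\mathcal{C}_\alpha(u,\rho)=-\rho\Lambda^\alpha u + \big(\mathcal{C}_\alpha(u,\rho)-\rho\,\mathcal{C}_\alpha(u,1)\big)$ together with the lower bound $\rho\ge\rho_{\min}>0$ (propagated along characteristics), close by Gronwall/bootstrap, pass to the limit by Aubin--Lions, and get uniqueness in a low norm. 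This matches the expected strategy and the stated solution class, and your observation that the supremum version of the blow-up criterion \eqref{blow0} is an immediate consequence of the (stronger) time-integral criterion (no BKM refinement required) is correct.

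Two small cautionary remarks on the parts you flagged as delicate. First, when you commute $\Lambda^{m+1}$ past multiplication by $\rho$ to produce the dissipative term, the clean way is to first move half a power of $\Lambda^\alpha$ onto the test function and control $[\Lambda^{\alpha/2},\rho]$; simply writing ``$-c\rho_{\min}\|u\|_{\dot H^{m+1+\alpha/2}}^2$ plus lower-order errors'' papers over the fact that those commutators still cost $\alpha/2$ derivatives of $u$ at top order and must be absorbed by $\varepsilon$-Young into the dissipation. Second, your claim that the Lear--Shvydkoy estimates ``go through verbatim for all $\alpha\in(0,2)$'' is what the paper's citation implicitly asserts, but you should be aware that the cited work is framed for $\alpha\in(1,2)$; the local theory at this regularity ($m>d/2+1$) does extend to $\alpha\in(0,2)$ (the dissipation is only used qualitatively, not to close the estimates), but this extension is a genuine, if minor, step rather than a literal verbatim repetition. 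Neither point undermines the argument.
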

The proof of the theorem can be found in e.g. \cite[Theorem 1.1]{lear2021unidirectional}. We omit the details.

Throughout the remainder of this paper, we will use the notation $T^*$ to represent the maximal existence time of the local smooth solution $(\rho,u)$ constructed in Theorem \ref{thm.local2} for the unidirectional Euler-alignment system \eqref{eq:Euni}. This notation will be consistently employed in our subsequent analysis.

\subsection{A priori bounds}\label{subsec:mp}
We list some useful a priori bounds on the solution $(\rho,u)$ and the auxiliary quantity $G := \partial_{x_1}u-\Lambda^\alpha \rho$.

First, by integrating the continuity equation $\eqref{eq:Euni}_1$ with respect to $x$-variable, we have the conservation of mass
\begin{align}\label{eq:mass-pres}
  \int_{\T^d} \rho(x,t) \dd x = \int_{\T^d} \rho_0 (x) \dd x =: \bar{\rho}_0.
\end{align}
We also have the conservation of momentum:
\begin{align*}
  \int_{\T^d} (\rho u)(x,t) \dd x = \int_{\T^d} (\rho_0 u_0)(x) \dd x,
\end{align*}
which can be deduced from the integration over $\T^d$ of the momentum equation
\begin{equation*}
  \partial_t(\rho u) + \partial_{x_1}(\rho u^2) = \rho\,\mathcal{C}_\alpha(u,\rho),
\end{equation*}
and use the fact
\begin{equation*}
  \int_{\T^d}\rho(x)\,\mathcal{C}_\alpha(u,\rho)(x)\dd x=\int_{\T^d}\int_{\T^d} \phi(x-y) (u(y)-u(x)) \rho(x)\rho(y) \dd x \dd y=0.
\end{equation*}

Next, we define $F := \frac{G}{\rho}$. Using the equation of $G$ in \eqref{eq:G}, we find
\begin{align}\label{eq.F}
  \partial_t F + u\,\partial_{x_1}F=0, \quad F|_{t=0}(x)=F_0(x).
\end{align}
It directly yields that
\begin{align}\label{es.F}
  \|F(t)\|_{L^\infty(\T^d)} \leq \|F_0\|_{L^\infty(\T^d)} = \Big\|\frac{\partial_{x_1}u_0 - \Lambda^\alpha \rho_0}{\rho_0} \Big\|_{L^\infty}.
\end{align}

From the relation $\partial_{x_1}u = G + \Lambda^\alpha \rho = F \rho + \Lambda^\alpha \rho$, we can write the continuity equation $\eqref{eq:Euni}_1$ as
\begin{align*}
  \partial_t\rho + u\,\partial_{x_1}\rho=- F \rho^2 - \rho\Lambda^\alpha\rho.
\end{align*}
This leads to the following a priori bounds on $\rho$.
\begin{proposition}\label{prop:MP-rho}
 There exist positive constants $\overline{\rho}$ and $\underline{\rho}$, depending on $\alpha, \bar{\rho}_0$ and $\|F_0\|_{L^\infty}$, such that
\begin{equation}\label{eq:rho-MP}
  0< \underline{\rho} \leq \rho(x,t)\leq \overline{\rho} <\infty,\qquad\forall~x\in\T^d,\,\,t\in[0,T^*).
\end{equation}
\end{proposition}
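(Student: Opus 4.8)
The plan is to derive the two-sided bound \eqref{eq:rho-MP} by a maximum-principle argument applied to the transport-type equation
\[
  \partial_t\rho + u\,\partial_{x_1}\rho = -F\rho^2 - \rho\Lambda^\alpha\rho,
\]
exploiting the uniform bound $\|F(t)\|_{L^\infty}\le\|F_0\|_{L^\infty}=:M$ from \eqref{es.F} and the conservation of mass \eqref{eq:mass-pres}. Since $(\rho,u)$ is a smooth solution on $[0,T^*)$ (in particular $\rho$ is continuous on $\T^d\times[0,t]$ for each $t<T^*$, with $\rho_0>0$ so that $\rho>0$ on a maximal initial interval), for each fixed $t$ I will track the spatial maximum $\overline{\rho}(t):=\max_{x}\rho(x,t)$ and minimum $\underline{\rho}(t):=\min_x\rho(x,t)$, which are Lipschitz in $t$ and differentiable a.e. by Rademacher's theorem. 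The first key observation is the sign of $\Lambda^\alpha\rho$ at extremal points: at a point $x_t$ where $\rho(\cdot,t)$ attains its maximum one has $\Lambda^\alpha\rho(x_t,t)\ge 0$, and at a minimum point $\Lambda^\alpha\rho\le 0$; this is immediate from the singular-integral representation $\Lambda^\alpha\rho(x)=c_\alpha\,\mathrm{p.v.}\!\int \frac{\rho(x)-\rho(y)}{|x-y|^{d+\alpha}}\dd y$ since every term in the integrand has a definite sign. At such points the advection term $u\,\partial_{x_1}\rho$ vanishes as well.

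Thus, evaluating the equation at a maximum point and using $\Lambda^\alpha\rho(x_t,t)\ge0$, $\rho(x_t,t)=\overline{\rho}(t)>0$ gives, for a.e.\ $t$,
\[
  \frac{\dd}{\dd t}\overline{\rho}(t) \le -F(x_t,t)\,\overline{\rho}(t)^2 - \overline{\rho}(t)\,\Lambda^\alpha\rho(x_t,t) \le M\,\overline{\rho}(t)^2.
\]
This Riccati inequality by itself only yields a finite-time bound, so the second key step is to feed in the conservation of mass to prevent blow-up: since $\int_{\T^d}\rho(x,t)\dd x=\bar\rho_0$ and $\rho>0$, the maximum cannot stay large on a set of positive measure — more precisely, combining the $L^1$ bound with a lower bound on $\rho$ away from its maximum. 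The cleaner route, and the one I would actually carry out, is to notice that at the maximum point $\Lambda^\alpha\rho(x_t,t)\ge c\big(\overline\rho(t)-\tfrac{1}{|\T^d|}\int\rho\big)=c\big(\overline\rho(t)-c'\bar\rho_0\big)$ for a constant $c=c(\alpha,d)>0$ depending on the diameter of $\T^d$, because the average of $\rho(x_t)-\rho(y)$ over $y$ is exactly $\overline\rho(t)$ minus the mean, and the kernel is bounded below on the torus. Hence
\[
  \frac{\dd}{\dd t}\overline{\rho}(t) \le M\,\overline{\rho}(t)^2 - c\,\overline{\rho}(t)\big(\overline{\rho}(t)-c'\bar\rho_0\big),
\]
and when $M$ is such that... — here one sees the issue: if $M$ is large this does not close. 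The honest fix, which I expect is what the paper intends, is that the nonlinear dissipation $-\rho\Lambda^\alpha\rho$ at the maximum is in fact superquadratic relative to the overshoot; a standard lemma (as in Córdoba--Córdoba, or Kiselev--Nazarov--Volberg-type nonlinear lower bounds) gives $\Lambda^\alpha\rho(x_t)\ge c\,\overline\rho(t)^{1+\alpha/d}\|\rho\|_{L^1}^{-\alpha/d}$ once $\overline\rho(t)$ is large compared to the mean, so that the dissipative term $-\overline\rho\,\Lambda^\alpha\rho \lesssim -\overline\rho^{\,2+\alpha/d}$ beats $M\overline\rho^2$ for $\overline\rho$ large, pinning $\overline\rho(t)\le\overline\rho$ with $\overline\rho$ depending only on $\alpha$, $\bar\rho_0$, $M$.

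For the lower bound, at a minimum point $x_t'$ with value $\underline\rho(t)$ one has $\Lambda^\alpha\rho(x_t',t)\le0$, whence
\[
  \frac{\dd}{\dd t}\underline\rho(t) \ge -F(x_t',t)\,\underline\rho(t)^2 - \underline\rho(t)\Lambda^\alpha\rho(x_t',t) \ge -M\,\underline\rho(t)^2,
\]
which gives $\underline\rho(t)\ge \big(\underline\rho(0)^{-1}+Mt\big)^{-1}>0$ on any finite interval — again only a finite-time bound — and one upgrades it to a uniform positive lower bound $\underline\rho$ by the symmetric nonlinear estimate: now that $\overline\rho$ is known, the already-established upper bound bounds $\|\rho(t)\|_{L^1}$ and $\|\rho(t)\|_{L^\infty}$, so the nonlinear lower bound $-\Lambda^\alpha\rho(x_t')\ge c\,(\text{gap})^{1+\alpha/d}/\overline\rho^{\alpha/d}$ at the minimum gives a dissipation term pushing $\underline\rho$ up once it is small, and the resulting ODE differential inequality has a strictly positive stable equilibrium $\underline\rho=\underline\rho(\alpha,d,\bar\rho_0,M)>0$. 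Finally, a continuity/bootstrap argument in $t$ shows these bounds persist on all of $[0,T^*)$: the set of times on which $\underline\rho(t)\ge\underline\rho/2$ is relatively open and closed and nonempty, hence all of $[0,T^*)$. The main obstacle, and the place needing genuine care rather than routine bookkeeping, is precisely establishing the nonlinear (superlinear-in-the-overshoot) lower bound on $\Lambda^\alpha\rho$ at extremal points so that the Riccati contribution from $F$ is dominated — without it one only gets local-in-time bounds, which would not suffice; I would invoke the pointwise inequality of Córdoba--Córdoba type together with the $L^1$ conservation to supply the needed coercivity.
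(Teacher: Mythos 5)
Your approach matches the one the paper relies on: the paper itself does not reproduce a proof but cites Constantin--Vicol's nonlinear maximum principle (via \cite[Theorem 2.1]{do2018global}) for the upper bound and \cite[Lemma 3.1]{shvydkoy2017eulerian2} for the uniform lower bound, the latter indeed using mass conservation \eqref{eq:mass-pres} in exactly the way you describe. Two small inaccuracies worth flagging, neither a real gap: (a) the superquadratic pointwise lower bound $\Lambda^\alpha\rho(\bar x)\gtrsim \rho(\bar x)^{1+\alpha/d}\|\rho\|_{L^1}^{-\alpha/d}$ at the maximum is Constantin--Vicol's nonlinear maximum principle, not a C\'ordoba--C\'ordoba or Kiselev--Nazarov--Volberg estimate; and (b) for the uniform lower bound at the minimum you do not actually need the superlinear ``gap'' estimate you invoke — on $\T^d$ it suffices that the periodized kernel is bounded below by a positive constant $K_0$, which immediately gives $-\Lambda^\alpha\rho(x_t')\ge K_0\big(\bar\rho_0-\underline\rho(t)\,|\T^d|\big)$, and plugging this into the Riccati inequality yields a positive barrier for $\underline\rho$ depending only on $\alpha,d,\bar\rho_0,\|F_0\|_{L^\infty}$ (and $\underline\rho(0)$). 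Finally, as you wrote it the constants $\overline\rho,\underline\rho$ also depend on $\|\rho_0\|_{L^\infty}$ and $\min\rho_0$ through the initial values $\overline\rho(0),\underline\rho(0)$ — this matches what the cited lemmas actually prove, and the paper's stated dependence list is simply not exhaustive.
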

The upper bound can be obtained by using the nonlinear maximum principle introduced by Constantin and Vicol \cite{constantin2012nonlinear}. See e.g. \cite[Theorem 2.1]{do2018global} for applications to the 1D Euler-alignment system. A similar argument leads to a time-dependent lower bound $\rho(t)\gtrsim 1/t$. A uniform lower bound was first obtained in \cite{shvydkoy2017eulerian2}, making additional use of \eqref{eq:mass-pres}. We refer the detailed proof to \cite[Lemma 3.1]{shvydkoy2017eulerian2}.

In combination with \eqref{es.F}, we also get that for every $t\in [0,T^*)$,
\begin{align}\label{es:G}
  \|G(t)\|_{L^\infty(\T^d)} \leq \|F\rho(t)\|_{L^\infty(\T^d)}\leq \overline{\rho} \, \|F_0\|_{L^\infty(\T^d)}.
\end{align}

Finally, for the velocity $u$, let us recall
\begin{align}\label{eq:u}
  \partial_t u + u\, \partial_{x_1}u = c_\alpha\,\mathrm{p.v.} \int_{\R^d} \frac{u(y) - u(x)}{|x-y|^{d+\alpha}} \rho(y) \dd y.
\end{align}
The standard maximum principle yields the uniform bound
\begin{align*}
  \|u(t)\|_{L^\infty(\T^d)} \leq \|u_0\|_{L^\infty(\T^d)},\qquad \forall~ t\in[0,T^*).
\end{align*}
Moreover, we recall the following exponential decay estimate of $u$ (see \cite[Theorem 2.2]{tadmor2014critical}).
\begin{lemma}\label{lem:exp-decay}
  Let $\alpha\in (0,2)$. Assume that $u(x,t)$ is a smooth solution solving equation \eqref{eq:u} on $[0,T^*)$.
Denote by
\begin{align*}
  V(t) : = \sup_{x,y\in \mathrm{supp}\, \rho(\cdot ,t)} |u(x,t) - u(y,t)|.
\end{align*}
Then there exists a constant $c_0>0$ depending only on $\alpha,d$ such that for every
$t\in [0,T^*)$,
\begin{align}\label{eq:exp-decay}
  V(t) \leq V_0\, e^{-c_0 t}.
\end{align}
\end{lemma}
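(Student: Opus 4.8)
The plan is to control the oscillation of $u$ over the (transported) support of $\rho$ by a Grönwall argument, exploiting that the advection term is exactly the material derivative along characteristics and that the kernel $\phi$ is bounded below on $\T^d$. Set $M(t):=\max u(\cdot,t)$ and $m(t):=\min u(\cdot,t)$ over $\mathrm{supp}\,\rho(\cdot,t)$, so that $V(t)=M(t)-m(t)$. The uni-directional flow $\dot X(t)=u(X(t),t)\mathbf d$ transports $\rho$, hence carries its support; in the non-vacuous case of the paper $\mathrm{supp}\,\rho(\cdot,t)=\T^d$ for all $t$, and in any case the periodized kernel in \eqref{eq:u} is bounded below by a dimensional constant $\phi_{\min}=\phi_{\min}(\alpha,d)>0$ on $\T^d\times\T^d$. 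From $\partial_t u=-u\,\partial_{x_1}u+\mathcal C_\alpha(u,\rho)$ and the regularity class in Theorem~\ref{thm.local2}, $\partial_t u$ is bounded on compact time intervals, so $u$ is Lipschitz in $t$ uniformly in $x$; consequently $M$ and $m$ are Lipschitz and differentiable at a.e.\ $t$.

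Next comes the characteristic computation. Along the characteristic $X$ issuing from a maximizer $x_0$ of $u(\cdot,t_0)$, equation \eqref{eq:u} gives
\[
\frac{d}{dt}u(X(t),t)=c_\alpha\,\mathrm{p.v.}\!\int_{\R^d}\frac{u(y,t)-u(X(t),t)}{|X(t)-y|^{d+\alpha}}\,\rho(y,t)\,\dd y .
\]
At a.e.\ $t_0$ where $M$ is differentiable, comparing $t\mapsto u(X(t),t)$ (which lies below $M(t)$ and equals it at $t_0$) with $M$ shows $M'(t_0)$ equals the right-hand side evaluated at $x_0$, and since $u(y,t_0)\le M(t_0)$ on $\mathrm{supp}\,\rho$ this is $\le 0$. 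The same envelope (Danskin-type) argument at a minimizer $y_0$ of $u(\cdot,t_0)$ yields $m'(t_0)\ge 0$ with the analogous formula.

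For the rate, subtract the two identities: at a.e.\ $t_0$ (all quantities at $t_0$),
\[
V'(t_0)=c_\alpha\!\int_{\R^d}\Big[\frac{u(y)-M}{|x_0-y|^{d+\alpha}}-\frac{u(y)-m}{|y_0-y|^{d+\alpha}}\Big]\rho(y)\,\dd y .
\]
Because $M-u(y)\ge 0$ and $u(y)-m\ge 0$ for $y\in\mathrm{supp}\,\rho$, and the kernel is $\ge\phi_{\min}>0$, the bracket is bounded by $-\phi_{\min}\big[(M-u(y))+(u(y)-m)\big]=-\phi_{\min}\,V(t_0)$. Integrating against $\rho(y)\,\dd y$ and using mass conservation \eqref{eq:mass-pres} gives $V'(t_0)\le-\phi_{\min}\bar\rho_0\,V(t_0)$ a.e., and Grönwall's inequality yields \eqref{eq:exp-decay} with $c_0=\phi_{\min}\bar\rho_0$ (a constant fixed by $\alpha$, $d$ and the conserved total mass, in agreement with \cite{tadmor2014critical}).

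The only genuinely delicate point is the a.e.\ differentiability of $M$ and $m$ and the identification of their derivatives along characteristics; this is routine given the uniform Lipschitz-in-$t$ bound on $u$ and the envelope argument, as in \cite{tadmor2014critical}. Otherwise there is no obstacle: the strong singularity of $\phi$ enters only through the trivial lower bound $\phi\ge\phi_{\min}>0$, and the Burgers transport term disappears upon passing to the material derivative.
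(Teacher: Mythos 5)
The paper does not give its own proof of this lemma but cites \cite[Theorem~2.2]{tadmor2014critical}; your argument reproduces exactly that oscillation-decay maximum principle — differentiate the running max and min of $u$ along characteristics so that the Burgers term drops out as a material derivative, exploit the uniform positive lower bound of the periodized kernel on $\T^d$ together with mass conservation to obtain $V'(t)\le -c_0\,V(t)$ a.e., and close by Gr\"onwall — and it is correct. Two small remarks: the rate you obtain should also carry the normalization constant from \eqref{eq:u}, i.e.\ $c_0 = c_\alpha\,\phi_{\min}\,\bar\rho_0$ rather than $\phi_{\min}\bar\rho_0$; and, as you correctly observe, $c_0$ depends on the conserved mass $\bar\rho_0$ in addition to $\alpha,d$, a dependence that the paper's statement leaves implicit by fixing the initial data.
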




\section{General estimates on the evolution of the modulus of continuity}\label{sec:gwp}

Our primary analytical tool for studying the global well-posedness of the system is the innovative \emph{modulus of continuity method}. This method was initially developed by Kiselev et al. in \cite{kiselev2007global} for the critical quasi-geostrophic equation. It has proven effective in tackling various fluid equations with critical scalings and establishing global well-posedness results. Notably, the method has been successfully applied to the 1D Euler-alignment system in \cite{do2018global, kiselev2018global, miao2021global}, where global well-posedness is demonstrated for $0<\alpha<2$.

In this section, we establish the framework of the modulus of continuity method for our system \eqref{eq:Euni} and derive the necessary estimates to establish global well-posedness.

\subsection{The modulus of continuity}\label{subsec:MOC}
A function $\omega(\xi):(0,\infty)\rightarrow(0,\infty)$ is called a \emph{modulus of continuity} (MOC) if $\omega(\xi)$ is continuous, nondecreasing, concave, and piecewise $C^2$ with one-sided derivatives defined at every point in $(0,\infty)$.
We say a function $f$ obeys the modulus of continuity $\omega$ if
\begin{align*}
  |f(x)-f(y)|<\omega(|x-y|),\quad \mathrm{for\,\,all}\,\,x\neq y\in \R^d.
\end{align*}

We start with the following modulus of continuity
\begin{equation*}
\bar{\omega}^{\delta,\mu}(\xi): =
\left\{
\begin{array}{ll}
  \delta \big(\xi-\frac{1}{4}\xi^{1+ \mu}\big), & \quad \hbox{for \,\,$0<\xi\leq1$;} \\
  \frac{3}{4}\delta + \frac{\delta}{2}\log\xi, & \quad \hbox{for \,\,$\xi>1$,}
\end{array}
\right.
\end{equation*}
where $\mu\in (0, \min\{\alpha,1\})$ is fixed later and $\delta>0$ is a sufficiently small parameter to be chosen later.

Consider a family of MOC via scaling
\begin{equation}\label{def.omg}
\omega_\lambda^{\delta,\mu} (\xi) := \bar{\omega}^{\delta,\mu} (\tfrac{\xi}{\lambda})=
\left\{
\begin{array}{ll}
  \delta\lambda^{-1}\xi-\frac14\delta\lambda^{-1-\mu}\xi^{1+ \mu}, & \quad \hbox{for \,\,$0<\xi\leq\lambda$;} \\
  \frac{3}{4}\delta+ \frac{1}{2}\delta \log\frac\xi\lambda, & \quad \hbox{for \,\,$\xi>\lambda$}.
\end{array}
\right.
\end{equation}
The following lemma states that any bounded Lipschitz function obeys a MOC in this family. The proof can be found in e.g. \cite[Lemma 4.1]{miao2021global}.
\begin{lemma}\label{lem.data}
For any function $f\in W^{1,\infty}(\R^d)$ and for every $\lambda$ satisfying
\begin{align*}
  0<\lambda \leq \frac{2\|f\|_{L^\infty}}{\|\nabla f\|_{L^\infty}}e^{-4\delta^{-1}\|f\|_{L^\infty}},
\end{align*}
we have that $f$ obeys the MOC $\omega_\lambda^{\delta,\mu}$ defined in \eqref{def.omg}.
\end{lemma}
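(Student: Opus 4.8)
The goal is to show that any bounded Lipschitz function $f$ obeys the modulus of continuity $\omega_\lambda^{\delta,\mu}$ from \eqref{def.omg}, provided $\lambda$ is taken small enough in terms of $\|f\|_{L^\infty}$, $\|\nabla f\|_{L^\infty}$, and $\delta$.

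The plan is to split the verification of $|f(x)-f(y)| < \omega_\lambda^{\delta,\mu}(|x-y|)$ according to the size of $\xi := |x-y|$ relative to $\lambda$. First, for $\xi$ small (say $0<\xi\leq\lambda$), I would use the Lipschitz bound $|f(x)-f(y)|\leq \|\nabla f\|_{L^\infty}\,\xi$, and compare it with $\omega_\lambda^{\delta,\mu}(\xi) = \delta\lambda^{-1}\xi - \tfrac14\delta\lambda^{-1-\mu}\xi^{1+\mu}$. On $(0,\lambda]$ the correction term satisfies $\tfrac14\delta\lambda^{-1-\mu}\xi^{1+\mu}\leq\tfrac14\delta\lambda^{-1}\xi$, so $\omega_\lambda^{\delta,\mu}(\xi)\geq \tfrac34\delta\lambda^{-1}\xi$; hence it suffices that $\|\nabla f\|_{L^\infty}\,\xi < \tfrac34\delta\lambda^{-1}\xi$, i.e. $\lambda < \tfrac34\delta\|\nabla f\|_{L^\infty}^{-1}$. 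Since the stated hypothesis on $\lambda$ includes the factor $e^{-4\delta^{-1}\|f\|_{L^\infty}}<1$, the prescribed bound on $\lambda$ is indeed at most $2\|f\|_{L^\infty}\|\nabla f\|_{L^\infty}^{-1}$ times a number $<1$; a short check confirms this is smaller than $\tfrac34\delta\|\nabla f\|_{L^\infty}^{-1}$ once $\delta$ is small (or one simply notes that this is exactly the role of the two competing constraints and keeps whichever is more restrictive). The strict inequality is preserved since $\xi>0$.

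Second, for $\xi>\lambda$, I would use instead the crude bound $|f(x)-f(y)|\leq 2\|f\|_{L^\infty}$ and compare with $\omega_\lambda^{\delta,\mu}(\xi) = \tfrac34\delta + \tfrac12\delta\log\tfrac{\xi}{\lambda}$. Since $\log\tfrac{\xi}{\lambda}\geq 0$ on this range, it is enough to have $2\|f\|_{L^\infty}\leq\tfrac12\delta\log\tfrac{\xi}{\lambda}$ for all $\xi$ such that the Lipschitz bound does not already win; the borderline case is $\xi$ just above $\lambda$, where we must instead continue to rely on the Lipschitz bound. Concretely, the Lipschitz estimate $\|\nabla f\|_{L^\infty}\,\xi$ controls $f$'s oscillation as long as $\|\nabla f\|_{L^\infty}\,\xi\leq 2\|f\|_{L^\infty}$, i.e. $\xi\leq 2\|f\|_{L^\infty}\|\nabla f\|_{L^\infty}^{-1}$, and on this intermediate window $\lambda<\xi\leq 2\|f\|_{L^\infty}\|\nabla f\|_{L^\infty}^{-1}$ one checks $\|\nabla f\|_{L^\infty}\,\xi < \tfrac34\delta + \tfrac12\delta\log\tfrac{\xi}{\lambda}$ using monotonicity and the bound on $\lambda$. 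For $\xi> 2\|f\|_{L^\infty}\|\nabla f\|_{L^\infty}^{-1}$, the crude bound $2\|f\|_{L^\infty} < \tfrac34\delta + \tfrac12\delta\log\tfrac{\xi}{\lambda}$ holds precisely because $\log\tfrac{\xi}{\lambda}$ then exceeds $\log\!\big(2\|f\|_{L^\infty}\|\nabla f\|_{L^\infty}^{-1}\lambda^{-1}\big)\geq 4\delta^{-1}\|f\|_{L^\infty}$ by the hypothesis on $\lambda$, which makes $\tfrac12\delta\log\tfrac{\xi}{\lambda} > 2\|f\|_{L^\infty}$.

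The only mildly delicate point — and the step I expect to require the most care — is the intermediate window $\lambda < \xi \leq 2\|f\|_{L^\infty}\|\nabla f\|_{L^\infty}^{-1}$, where neither the purely linear estimate nor the purely logarithmic one is trivially dominant; here one must track the constants so that the Lipschitz bound transitions into the logarithmic bound without ever exceeding $\omega_\lambda^{\delta,\mu}$. This is a routine but slightly fiddly one-variable inequality, handled by verifying it at $\xi=\lambda^+$ (where it reduces to $\lambda\|\nabla f\|_{L^\infty} < \tfrac34\delta$, essentially the first-case constraint) and then checking that the right-hand side grows at least as fast as the left throughout the window, using $\tfrac{d}{d\xi}\big(\tfrac12\delta\log\tfrac{\xi}{\lambda}\big) = \tfrac{\delta}{2\xi}$ against $\|\nabla f\|_{L^\infty}$. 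Since the full argument of \LEM{lem.data} is standard and appears in \cite[Lemma 4.1]{miao2021global}, I would present the case split compactly and cite that reference for the elementary estimates, emphasizing only the role of the exponential factor $e^{-4\delta^{-1}\|f\|_{L^\infty}}$ in closing the large-$\xi$ regime.
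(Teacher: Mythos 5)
Your case division (small $\xi \leq \lambda$, intermediate window $\lambda < \xi \leq \xi_* := 2\|f\|_{L^\infty}/\|\nabla f\|_{L^\infty}$, and large $\xi > \xi_*$) is the right framework, and the small-$\xi$ and large-$\xi$ estimates are correct. But your proposed argument for the intermediate window contains a genuine gap: you claim to "check that the right-hand side grows at least as fast as the left throughout the window," i.e.\ to show $\frac{\delta}{2\xi}\ge\|\nabla f\|_{L^\infty}$ on $(\lambda,\xi_*]$. This fails in general: at $\xi=\xi_*$ we have $\frac{\delta}{2\xi_*}=\frac{\delta\|\nabla f\|_{L^\infty}}{4\|f\|_{L^\infty}}$, which is strictly less than $\|\nabla f\|_{L^\infty}$ whenever $\delta<4\|f\|_{L^\infty}$ — the typical situation, since $\delta<1$ and $\|f\|_{L^\infty}$ is arbitrary. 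So the function $g(\xi):=\tfrac34\delta+\tfrac12\delta\log\tfrac{\xi}{\lambda}-\|\nabla f\|_{L^\infty}\xi$ is not monotone on the window and you cannot conclude positivity from $g(\lambda^+)>0$ alone.

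The fix is to use concavity rather than monotonicity: $g$ is strictly concave (since $g''(\xi)=-\frac{\delta}{2\xi^2}<0$), so it suffices to verify $g>0$ at \emph{both} endpoints of the window. You already have $g(\lambda^+)=\tfrac34\delta-\lambda\|\nabla f\|_{L^\infty}>0$ (noting, incidentally, that this holds for \emph{all} $\delta>0$, not merely small $\delta$: writing $t=4\delta^{-1}\|f\|_{L^\infty}$ the hypothesis gives $\lambda\|\nabla f\|_{L^\infty}\le\tfrac{\delta}{2}te^{-t}\le\tfrac{\delta}{2e}<\tfrac34\delta$, so your hedge is unnecessary). And at $\xi=\xi_*$ the hypothesis on $\lambda$ gives $\log\tfrac{\xi_*}{\lambda}\ge 4\delta^{-1}\|f\|_{L^\infty}$, whence $g(\xi_*)\ge\tfrac34\delta+2\|f\|_{L^\infty}-2\|f\|_{L^\infty}=\tfrac34\delta>0$. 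Concavity then gives $g>0$ on all of $(\lambda,\xi_*]$, and your large-$\xi$ step finishes the proof. With this repair your outline is sound; the paper itself just cites \cite[Lemma 4.1]{miao2021global}, so I cannot compare against the precise argument there, but the concavity trick is the standard route for this family of logarithmic MOCs.
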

As $\rho_0$ and $u_0$ are Lipschitz functions, for any given parameters $\delta$ and $\mu$, we may pick a small enough $\lambda$ such that they both obey $\omega_\lambda^{\delta,\mu}$. We choose the following MOC for the density
\begin{align}\label{def:omeg1}
  \omega_1(\xi) := \omega_\lambda^{\delta_1,\mu}(\xi)
\end{align}
with some $0< \delta_1 < 1$. Our goal is to demonstrate that the density $\rho(t)$ obeys $\omega_1$ for all time. This result implies the desired Lipschitz bound:
\begin{align}\label{eq:rho-Lip}
  \|\nabla \rho(t)\|_{L^\infty} \leq \omega_1'(0^+)=\delta\lambda^{-1}<\infty,\quad \forall~ t\in [0,T^*).
\end{align}

As discussed in the introduction, our approach involves simultaneously propagating the MOCs on the density and velocity. For this purpose, we introduce the MOC on the velocity:
\begin{align}\label{def.omeg2}
  \omega_2(\xi) := \omega_\lambda^{\delta_2,\mu}(\xi),
\end{align}
with $0<\delta_2<1$, and our aim is to show that $u(t)$ satisfies the MOC $\omega_2$ for all time. In most cases, we can choose $\delta_1=\delta_2$, but we keep the flexibility of selecting different parameters $\delta_1$ and $\delta_2$. This flexibility will play a crucial role in the critical case when $\alpha=1$ (see Remark \ref{rmk:K}).

Furthermore, to obtain the decay estimate \eqref{eq:flock1}, we consider a time-dependent MOC on $u$:
\begin{align}\label{def.omeg2t}
  \omega_2(\xi,t) := e^{-c_0 \, t} \omega_2(\xi),
\end{align}
where $c_0>0$ is a constant appearing in Lemma \ref{lem:exp-decay}. If $u(t)$ satisfies the MOC $\omega_2(\xi,t)$, then we obtain
\begin{align}\label{eq:u-Lip}
  \|\nabla u(t)\|_{L^\infty}\leq \delta_2 \lambda^{-1}e^{- c_0 \,t}, \quad \forall t\in [0,T^*)
\end{align}
where the Lipschitz norm decays exponentially in time.


The following lemma characterizes the only possible \emph{breakthrough scenario} when the two MOCs are not satisfied simultaneously. We refer the reader to \cite{kiselev2007global,kiselev2011nonlocal} for the proof.

\begin{lemma}[Breakthrough scenarios]\label{lem.brk}
Let $\rho(x,t)$, $u(x,t)$ be smooth functions on $\T^d\times [0,T^*)$.
Assume that $\rho_0(x)$ and $u_0$ obeys the MOCs $\omega_1$ and $\omega_2$, defined in \eqref{def:omeg1} and \eqref{def.omeg2} respectively.
Let $t=t_1\in (0,T^*)$ be the first time that either $\rho(x,t)$ violates the MOC $\omega_1(\xi)$ given by \eqref{def:omeg1}
or $u(x,t)$ violates the MOC $\omega_2(\xi,t)$ given by \eqref{def.omeg2t}.
Then there exist two distinct points $x\neq y\in\T^d$ such that either
\begin{align}\label{break.mode0}
  \rho(x,t_1) - \rho(y,t_1) = \omega_1(\xi)\quad \mathrm{with}\,\,\xi=|x-y|,
\end{align}
or
\begin{align}\label{break.mode}
  u(x,t_1)-u(y,t_1)=\omega_2(\xi,t_1),
\end{align}
and also for any $\tilde{x},\tilde{y}\in \T^d$ and $t\in[0,t_1]$,
\begin{align}\label{break.mode1}
  |\rho(\tilde{x},t) -\rho(\tilde{y},t)|\leq \omega_1(|\tilde{x}-\tilde{y}|),
  \quad |u(\tilde{x},t)-u(\tilde{y},t)|\leq \omega_2(|\tilde{x}-\tilde{y}|,t).
\end{align}
\end{lemma}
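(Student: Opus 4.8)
The plan is to combine a continuity/first-time argument with the now-standard "pushing the moduli apart" technique of Kiselev–Nazarov–Volberg. First I would make precise the sense in which the initial data \emph{strictly} obey the moduli. Since $\rho_0,u_0\in W^{1,\infty}$ and $\omega_i'(0^+)=\delta_i\lambda^{-1}$ is finite and strictly positive, one checks that there is room to spare: by choosing $\lambda$ as in \LEM{lem.data} one actually gets strict inequalities $|\rho_0(x)-\rho_0(y)|<\omega_1(|x-y|)$ and $|u_0(x)-u_0(y)|<\omega_2(|x-y|,0)$ for $x\neq y$, uniformly away from the origin (where the strictness degenerates but is controlled by the gap between the Lipschitz constant and $\omega_i'(0^+)$). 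This initial strictness, together with smoothness of $(\rho,u)$ on $\T^d\times[0,T^*)$ and compactness of $\T^d$, guarantees that the set of times at which both moduli hold is relatively open, so a \emph{first} violation time $t_1\in(0,T^*)$ is well-defined and at $t_1$ the weaker conclusion \eqref{break.mode1} holds for all $t\le t_1$ by continuity.

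Next I would show that at $t_1$ the violated modulus is actually \emph{attained} at some pair of distinct points, giving \eqref{break.mode0} or \eqref{break.mode}. The two subtleties are (i) the near-diagonal regime $\xi\to0^+$ and (ii) the far regime $\xi\to\infty$. For (i): because $\omega_i$ is concave with $\omega_i(0^+)=0$ and $\omega_i'(0^+)=\delta_i\lambda^{-1}$, while $\|\nabla\rho(t)\|_{L^\infty},\|\nabla u(t)\|_{L^\infty}$ stay finite on $[0,t_1]$ (by the smoothness of the solution up to $t_1<T^*$), one shows the functions cannot touch their modulus as $\xi\to0$ unless the Lipschitz constant has already reached $\omega_i'(0^+)$ — but that would itself be a first-touch at a \emph{pair} obtained via compactness of a maximizing sequence $(x_n,y_n)$ with $|x_n-y_n|\to0$, after extracting a limit point $x_*=y_*$ and invoking Taylor expansion to see the derivative in the optimal direction equals $\omega_i'(0^+)$; alternatively one perturbs $\lambda$ slightly in the initial step to make $\|\nabla\rho_0\|_{L^\infty},\|\nabla u_0\|_{L^\infty}$ strictly below $\omega_i'(0^+)$ and then the diagonal is excluded outright on $[0,t_1]$. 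For (ii): since $\|\rho(t)\|_{L^\infty}\le\overline\rho$ and $\|u(t)\|_{L^\infty}\le\|u_0\|_{L^\infty}$ are bounded (Proposition \ref{prop:MP-rho} and the maximum principle), while $\omega_1(\xi),\omega_2(\xi,t_1)\to\infty$ logarithmically as $\xi\to\infty$, only finitely large separations $\xi$ are relevant, so the relevant pairs $(x,y)$ lie in a compact set. Combining (i)–(ii) with the continuity of $(x,y,t)\mapsto \rho(x,t)-\rho(y,t)-\omega_1(|x-y|)$ and its velocity analogue, a standard compactness argument produces the touching pair at $t=t_1$.

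The main obstacle is the diagonal case (i): ruling out a "breakthrough at zero separation" cleanly, i.e. confirming that the violation must happen at a genuinely distinct pair $x\neq y$ rather than degenerate into the Lipschitz seminorm merely catching up to $\omega_i'(0^+)$. The cleanest fix — which I would adopt — is to strengthen the initial choice of $\lambda$ so that $\|\nabla\rho_0\|_{L^\infty}$ and $\|\nabla u_0\|_{L^\infty}$ are \emph{strictly} smaller than $\delta_1\lambda^{-1}$ and $\delta_2\lambda^{-1}$ respectively (possible since $\lambda\mapsto\delta_i\lambda^{-1}$ blows up as $\lambda\to0$), and then to run the later dissipation estimates (Subsections \ref{subsec:GE-rho}, \ref{subsec:GE-u}) only at the touching pair with $\xi>0$; the breakthrough-at-zero scenario is then vacuous. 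For completeness I would also record, as in \cite{kiselev2007global,kiselev2011nonlocal}, that at the touching pair the first-order optimality conditions hold — namely $\nabla_x$ and $\nabla_y$ of the constrained difference vanish in the tangential directions and the separation vector $x-y$ is (anti)parallel to $\nabla\rho$ (resp.\ $\nabla u$) at $x$ and $y$ — since these are exactly the structural facts exploited in the subsequent sections; but as this lemma only asserts existence of the touching pair and the a priori bound \eqref{break.mode1}, the argument above suffices, and I would simply cite \cite{kiselev2007global,kiselev2011nonlocal} for the remaining routine details.
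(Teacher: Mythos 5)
The paper does not actually prove this lemma; it simply cites \cite{kiselev2007global,kiselev2011nonlocal}. Your outline follows the same Kiselev--Nazarov--Volberg route, which is the right one, and the continuity/first-time and far-field compactness parts are fine (indeed, on $\T^d$ the far-field regime is automatically compact). However, your treatment of the near-diagonal regime — the genuinely delicate point of the lemma — contains a real gap.

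The issue is that you need to show the Lipschitz seminorm of $\rho(t_1)$ (resp.\ $u(t_1)$) cannot have caught up to $\omega_1'(0^+)$ (resp.\ $\omega_2'(0^+,t_1)$) without a touching at a pair of \emph{distinct} points. Your first argument says that if $\|\nabla\rho(t_1)\|_{L^\infty}=\omega_1'(0^+)$ then a maximizing sequence $(x_n,y_n)$ with $|x_n-y_n|\to 0$ gives "a first-touch at a pair" with limit $x_*=y_*$. But $x_*=y_*$ is not a pair of distinct points, so this does not yield \eqref{break.mode0} or \eqref{break.mode}; it leaves exactly the scenario that needs to be excluded. The correct mechanism, from \cite{kiselev2007global}, is to use the fact that $\omega_i''(0^+)=-\infty$: since $\omega_i(h)=\omega_i'(0^+)h-\tfrac{\delta_i}{4}\lambda^{-1-\mu}h^{1+\mu}$ for small $h$ with $\mu\in(0,1)$, while the smooth function $\rho(\cdot,t_1)$ has a bounded second derivative, a Taylor expansion along the direction $e=\nabla\rho(x_*)/|\nabla\rho(x_*)|$ at a point $x_*$ of maximal gradient would force an $O(h^2)$ term to dominate a $-c\,h^{1+\mu}$ term as $h\to 0^+$, which is impossible. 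This shows the gradient strictly stays below $\omega_i'(0^+)$ whenever the MOC holds, and that is what allows the compactness argument to locate a genuine touching pair $x\neq y$. Your "alternative fix" of simply choosing $\lambda$ small enough that $\|\nabla\rho_0\|_{L^\infty}<\delta_1\lambda^{-1}$ does not substitute for this: it controls the gradient only at $t=0$, and nothing in that step prevents the Lipschitz seminorm from growing to $\omega_i'(0^+)$ by time $t_1$. The concavity/unbounded-second-derivative argument is what closes the diagonal, not the initial choice of $\lambda$, and it should be stated explicitly rather than replaced by the perturbation-of-$\lambda$ shortcut.
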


Hence, in order to show that for all time $t\in (0,T^*)$ the solution $\rho(x,t)$ obeys the MOC $\omega_1(\xi)$
and simultaneously $u(x,t)$ obeys the MOC $\omega_2(\xi,t)$, we only need to consider two cases:
\begin{enumerate}[(i)]
\item No breakthrough for the MOC of $\rho$: under the scenario \eqref{break.mode0}, \eqref{break.mode1}, it suffices to show that
\begin{align}\label{aim.1}
  \partial_t \big(\rho(x,t) -\rho (y,t) \big)\big|_{t=t_1} <0;
\end{align}
\item No breakthrough for the MOC of $u$: under the scenario \eqref{break.mode}-\eqref{break.mode1}, it suffices to show that
\begin{align*}
  \partial_t \bigg( \frac{u(x,t)-u(y,t)}{\omega_2(\xi,t)}\bigg)\bigg|_{t=t_1}<0,
\end{align*}
or equivalently,
\begin{align}\label{aim-equi}
  \partial_t\big(u(x,t) -u(y,t) \big)|_{t=t_1} + c_0\, \omega_2(\xi,t_1) <0.
\end{align}
\end{enumerate}
If these estimates \eqref{aim.1}--\eqref{aim-equi} are proven, it leads to a contradiction and thus implies that the breakthrough scenario in Lemma \ref{lem.brk} cannot occur at any time.

Let us provide further comments on the two cases mentioned above. In the case (i), if $\xi>\omega_1^{-1}(\overline{\rho})$, we recall \eqref{eq:rho-MP} and find that
\[\rho(x,t_1) - \rho(y,t_1) \leq \overline{\rho}<\omega_1(\xi).\]
Therefore, scenario \eqref{break.mode0} cannot occur. Thus, we only need to establish \eqref{aim.1} for
\begin{align}\label{def:Xi1}
  0<\xi \leq \Xi_1 := \omega_1^{-1}(\overline{\rho}) =\lambda e^{2\delta_1^{-1} \overline{\rho} -\frac{3}{2}}.
\end{align}

Similarly, in the case (ii), if $\xi>\omega_2^{-1}(V_0)$, we recall \eqref{eq:exp-decay} and find that
\[u(x,t_1) - u(y,t_1) \leq V(t_1)\leq V_0\, e^{-c_0 t_1}<\omega_2(\xi)e^{-c_0 t_1}=\omega_2(\xi,t).\]
Therefore, scenario \eqref{break.mode} cannot occur. Thus, we only need to establish \eqref{aim-equi} for
\begin{align}\label{def:Xi2}
  0<\xi\leq \Xi_2:= \omega_2^{-1}(V_0) = \lambda e^{2\delta_2^{-1}V_0 -\frac{3}{2}}.
\end{align}

We may further choose $\lambda$ to be sufficiently small as 
\begin{align*}
  \lambda\leq \tfrac{1}{2} e^{- \big(2\delta_1^{-1} \overline{\rho} + 2\delta_2^{-1}V_0\big)},
\end{align*}
to ensure $\Xi_1,\Xi_2\leq \frac12$. 

Before we proceed, let us introduce some notational conventions for the sake of convenience. Since there are several quantities related to both $\rho$ and $u$ that have similar expressions, we will use a subscript $i$ to denote the common representation. Specifically, we will use $i=1$ and $i=2$ to refer to the quantities related to $\rho$ and $u$, respectively.

\subsection{Evolution of the MOC on $\rho$}\label{subsec:GE-rho}
We begin by presenting general estimates that lead to \eqref{aim.1} under the scenario \eqref{break.mode0}, \eqref{break.mode1}. The analysis for the 1D Euler-alignment system has been conducted in \cite{do2018global}, and we follow a similar procedure. However, it is important to note that additional difficulties arise due to the higher dimension $d>1$.

Below we drop the dependence on the variable $t_1$ for simplicity.
Taking advantage of the equation $\eqref{eq:Euni}_1$ and the relations $\partial_{x_1}u = \Lambda^\alpha\rho+G$,
$G = F\rho$ (recalling \eqref{eq:G} and \eqref{eq.F}), we see that
\[
  \partial_t \rho = - \partial_{x_1} (u\,\rho) = - \rho \Lambda^\alpha \rho - \rho^2 F - u\,\partial_{x_1} \rho,
\]
and thus
\begin{align}\label{Lip.rho}
  \partial_t \rho(x)- \partial_t \rho(y)
  & = -\rho(y)\big(\Lambda^\alpha \rho(x)-\Lambda^\alpha\rho(y)\big)
  -\big(\rho(x)-\rho(y)\big)\partial_{x_1}u(x)\nonumber\\
  &\quad
  -\rho(y)F(x)\big(\rho(x)-\rho(y)\big) - \rho^2(y)\big(F(x)-F(y)\big)
  -\big((u\partial_{x_1}\rho)(x)-(u\partial_{x_1}\rho)(y)\big)\nonumber\\
  & =: N_1 + N_2 + N_3 + N_4 + N_5.
\end{align}

The first term $N_1$ in the estimate encodes the dissipation. Indeed, along the lines of \cite{kiselev2007global,kiselev2011nonlocal}, we have
\begin{equation}\label{eq:D}
  \Lambda^\alpha \rho(x)-\Lambda^\alpha \rho(y) = c_\alpha\,
  \mathrm{p.v.} \int_{\R^d} \frac{\omega_1(\xi) - \rho(x+z) + \rho(y+z)}{|z|^{d+\alpha}} \mathrm{d}z \geq D_{\alpha,1}(\xi)> 0,
\end{equation}
where we denote
\begin{equation}\label{es:Dalp}
\begin{split}
  D_{\alpha,i}(\xi)
  & := C_1 \bigg(\int_0^{\frac {\xi}{2}}
  \frac{2\omega_i(\xi)-\omega_i(\xi+2\eta) - \omega_i(\xi-2\eta)}{\eta^{1+\alpha}} \mathrm{d}\eta \\
  & \qquad\qquad + \int_{\frac{\xi}{2}}^\infty \frac{2\omega_i(\xi)- \omega_i(2\eta+\xi) + \omega_i(2\eta-\xi)}{\eta^{1+\alpha}}
  \mathrm{d}\eta \bigg)
\end{split}
\end{equation}
with the constant $C_1>0$ that depends only on $\alpha$ and $d$. Note that $D_{\alpha,i}(\xi)$ is strictly positive as $\omega_i$ is concave.
Clearly, \eqref{eq:D} implies
\begin{equation}\label{es:N3}
  N_1 \leq - \underline{\rho}\, D_{\alpha,1}(\xi).
\end{equation}

Next, for the term $N_2$, it follows from $\partial_{x_1}u = \Lambda^\alpha \rho + \rho F$ and \eqref{es.F} that
\begin{equation}\label{es:N2}
  N_2 \leq - \omega_1(\xi) \Lambda^\alpha \rho(x) +\overline{\rho}\, \|F_0\|_{L^\infty} \omega_1(\xi).
\end{equation}
Following \cite{do2018global}, we obtain the following bound on $-\Lambda^\alpha \rho(x)$,
\begin{equation}\label{es:Lam-al-rho}
  -\Lambda^\alpha \rho (x) = \, c_\alpha\,\mathrm{p.v.} \int_{\R^d} \frac{\big( \rho(x-z)-\rho(y)\big) - \big(\rho(x) - \rho(y)\big)}{|z|^{d+\alpha}} \mathrm{d}z \leq A_{\alpha,1}(\xi),
\end{equation}
where
\begin{equation}\label{def:A-alp}
  A_{\alpha,i}(\xi) : = c_\alpha\,
  \mathrm{p.v.} \int_{\R^d} \frac{ \omega_i(|\xi e_1 -z|) - \omega_i(\xi)}{|z|^{d+\alpha}} \mathrm{d}z,
\end{equation}
and $e_1 := (1,0,\dots,0)$.
Here, due to the rotation and translation invariance, we may assume without loss of generality that
\[
  x=\big(\tfrac{\xi}{2},0,\dots,0\big), \quad y=\big(-\tfrac{\xi}{2},0,\dots,0\big),
\]
so that $x-y = \xi e_1$.

The term $N_3$ can be easily controlled by
\begin{align}\label{es:N4}
  |N_3| \leq \overline{\rho}\, \|F_0\|_{L^\infty} \omega_1(\xi),
\end{align}
using \eqref{eq:rho-MP} and \eqref{es.F}.

For the term $N_4$, we have
\[|F(x)-F(y)|\leq\|\nabla F\|_{L^\infty}\xi.\]
To control $\partial_{x_1}F$, we introduce $H : = \frac{\partial_{x_1} F}{\rho}$, which satisfies
\[
  \partial_t H + u \partial_{x_1} H =0,\quad\text{with}\quad H_0 = \tfrac{\partial_{x_1} F_0}{\rho_0} = \tfrac{1}{\rho_0} \partial_{x_1}
  \Big(\tfrac{\partial_{x_1} u_0 - \Lambda^\alpha \rho_0}{\rho_0}\Big),
\]
which immediately implies, for every $t\in [0,T^*)$,
\[
  \|H(t)\|_{L^\infty} \leq \|H_0\|_{L^\infty},\quad\text{and}\quad
  \|\partial_{x_1} F(t)\|_{L^\infty} \leq \overline{\rho}\, \|H_0\|_{L^\infty}.
\]
For $d \geq 2$, additional control on the full gradient $\nabla F$ is required. To obtain this control, we use \eqref{eq.F} and compute
\[
  \partial_t \nabla F + u\,\partial_{x_1} \nabla F = - \nabla u\, \partial_{x_1}F, \quad \text{with}\quad\nabla F_0 = \nabla \Big(\tfrac{\partial_{x_1}u_0 -\Lambda^\alpha \rho_0}{\rho_0}\Big).
\]
This yields
\begin{align*}
  \|\nabla F(t)\|_{L^\infty} & \leq \|\nabla F_0\|_{L^\infty} + \int_0^t \|\nabla u(\tau)\|_{L^\infty}
  \|\partial_{x_1} F(\tau)\|_{L^\infty} \dd\tau \\
  & \leq \|\nabla F_0\|_{L^\infty} + \overline{\rho}\, \|H_0\|_{L^\infty} \int_0^t \|\nabla u(\tau)\|_{L^\infty} \dd \tau.
\end{align*}
Given the scenario \eqref{break.mode1}, $u(t)$ satisfies $\omega_2(\xi,t)$ as defined in \eqref{def.omeg2t}. Thus,
\begin{align*}
  \|\nabla u(t)\|_{L^\infty} \leq e^{- c_0 \, t} \omega_2'(0^+) = e^{-c_0\,t} \delta_2 \lambda^{-1}, \quad \forall\, t\in [0,t_1].
\end{align*}
By integrating over time, we obtain
\[
\int_0^{t_1} \|\nabla u(\tau)\|_{L^\infty} \dd \tau \leq \delta_2\lambda^{-1}\int_0^{t_1}e^{- c_0\,t}\dd t\leq \frac{\delta_2}{c_0\lambda}.
\]
Hence the term $N_4$ can be estimated as follows
\begin{align}\label{es:N5}
  |N_4| \leq \overline{\rho}^2 \|\nabla F(t_1)\|_{L^\infty} \xi \leq \overline{\rho}^2
  \Big(\|\nabla F_0\|_{L^\infty}  + \tfrac{\overline{\rho}}{c_0} \|H_0\|_{L^\infty} \delta_2 \lambda^{-1}\Big) \xi.
\end{align}

Finally, for the advection term $N_5$, we find (e.g. see \cite{kiselev2007global})
\begin{equation}\label{es:N1}
  |N_5| \leq |u(x) - u(y)|\, \omega_1'(\xi).
\end{equation}
\begin{remark}\label{rmk:drift}
In one dimension, one can take advantage of the relation
\[
u=\partial_x^{-1}(\Lambda^\alpha\rho+G)=-\partial_x\Lambda^{\alpha-2}\rho+\partial_x^{-1}G,
\]
and use $\omega_1$ to control the MOC of $u$ (see \cite[Lemma 4.4]{do2018global}). However, in higher dimensions, we cannot expect that the MOC of $u$ can be controlled by $\omega_1$ since the relation only involves the partial derivative of $u$ in the $e_1$ direction. Therefore, we will separately show that $u(t)$ obeys $\omega_2(\xi,t)$ as defined in \eqref{def.omeg2t}. It is worth noting that when $\alpha\in(0,1)$, the term $\omega_2(\xi,t)\omega_1'(\xi)$ cannot be controlled by the dissipation. We need additional assistance from the regularity condition \eqref{eq:reg-cr}. Detailed calculations to establish these estimates will be provided in the subsequent sections.	
\end{remark}

Combining the estimates \eqref{Lip.rho}, \eqref{es:N3}, \eqref{es:N2}, \eqref{es:Lam-al-rho}, \eqref{es:N4}, \eqref{es:N5}, \eqref{es:N1}, we can deduce that for every $0 < \alpha < 2$ and $\xi > 0$,
\begin{equation}\label{eq:GE-rho}
\begin{split}
  &\partial_t\rho(x,t_1)- \partial_t\rho(y,t_1)
  \leq -\underline{\rho}\, D_{\alpha,1}(\xi)+\omega_1(\xi) \big(A_{\alpha,1}(\xi)+ \overline{\rho} \|F_0\|_{L^\infty}\big)\\
  &\qquad+\overline{\rho}^2
  \Big(\|\nabla F_0\|_{L^\infty}  + \tfrac{\overline{\rho}}{c_0} \|H_0\|_{L^\infty} \delta_2 \lambda^{-1}\Big) \xi +|u(x) - u(y)|\, \omega_1'(\xi).
\end{split}
\end{equation}

Now we further estimate the terms on the right hand side of \eqref{eq:GE-rho}. The goal is to use the first term to control all the rest. We start with a lower bound on the dissipative term $D_{\alpha,1}$.

\begin{lemma}[Dissipation bound]\label{lem:D}
Let $\omega_i(\xi)$ be the modulus of continuity given by \eqref{def:omeg1} or \eqref{def.omeg2}. Then for every $\alpha\in (0,2)$ and for any $\xi>0$, we have
\begin{equation}\label{es:Dalp2}
  D_{\alpha,i}(\xi)\geq
\begin{cases}
  \frac{C_1\mu(\mu+1)2^{\alpha-1} }{4(2-\alpha)} \delta_i \lambda^{-1-\mu}\xi^{1+\mu-\alpha}, & \quad \mathrm{for} \,\,\;0<\xi\leq\lambda, \\
  \frac{C_1 2^{\alpha-1}}{\alpha} \omega_i(\xi)\xi^{-\alpha}, & \quad \mathrm{for} \,\,\;\xi>\lambda.
\end{cases}
\end{equation}
\end{lemma}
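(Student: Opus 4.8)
The plan is to bound $D_{\alpha,i}(\xi)$ from below by retaining only a convenient piece of the two integrals in \eqref{es:Dalp}, exploiting the explicit form \eqref{def.omg} of $\omega_i=\omega_\lambda^{\delta_i,\mu}$ and the concavity of $\omega_i$. Concavity guarantees that both integrands in \eqref{es:Dalp} are nonnegative, so any sub-integral we keep still yields a valid lower bound. I will treat the two regimes $0<\xi\le\lambda$ and $\xi>\lambda$ separately.

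For the regime $0<\xi\le\lambda$, I would focus on the first integral $\int_0^{\xi/2}\frac{2\omega_i(\xi)-\omega_i(\xi+2\eta)-\omega_i(\xi-2\eta)}{\eta^{1+\alpha}}\,\dd\eta$. Since $\xi+2\eta\le 2\xi\le 2\lambda$ is not necessarily $\le\lambda$, I will instead restrict the $\eta$-integral further, to $\eta\in(0,\xi/4)$ say, or note directly that on $[\xi-2\eta,\xi+2\eta]\subset(0,2\lambda)$ one can use the second-order Taylor remainder: for a $C^2$ function, $2\omega_i(\xi)-\omega_i(\xi+2\eta)-\omega_i(\xi-2\eta)=-\int$ of $\omega_i''$ over a symmetric window, and on $(0,\lambda)$ we have $\omega_i''(\xi)=-\tfrac{1}{4}\delta_i\lambda^{-1-\mu}\mu(\mu+1)\xi^{\mu-1}\le 0$, so $-\omega_i''(r)=\tfrac14\delta_i\lambda^{-1-\mu}\mu(\mu+1)r^{\mu-1}$. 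Keeping $\eta\in(0,\xi/4)$ ensures $r\in(\xi/2,3\xi/2)\subset(0,\lambda)$ wholly inside the power regime (after possibly shrinking the window), whence the numerator is $\gtrsim \delta_i\lambda^{-1-\mu}\mu(\mu+1)\xi^{\mu-1}\eta^2$, and dividing by $\eta^{1+\alpha}$ and integrating $\eta$ over $(0,\xi/4)$ produces $\int_0^{\xi/4}\eta^{1-\alpha}\,\dd\eta=\frac{(\xi/4)^{2-\alpha}}{2-\alpha}$, giving the claimed $\delta_i\lambda^{-1-\mu}\xi^{1+\mu-\alpha}$ scaling with a constant of the stated shape $\frac{C_1\mu(\mu+1)2^{\alpha-1}}{4(2-\alpha)}$ after bookkeeping of the numerical factors. (The precise constant matching is a routine but slightly delicate computation; I would carry it out carefully but it is not the conceptual core.)

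For the regime $\xi>\lambda$, I would instead use the second integral $\int_{\xi/2}^\infty\frac{2\omega_i(\xi)-\omega_i(2\eta+\xi)+\omega_i(2\eta-\xi)}{\eta^{1+\alpha}}\,\dd\eta$, and further restrict to large $\eta$, say $\eta\ge\xi$, where $2\eta-\xi\ge\xi>\lambda$ and $2\eta+\xi$ also lie in the logarithmic branch. There $\omega_i(2\eta+\xi)-\omega_i(2\eta-\xi)=\tfrac{\delta_i}{2}\log\frac{2\eta+\xi}{2\eta-\xi}$ is small (bounded by a constant, and $\to 0$ as $\eta\to\infty$), so the numerator is $\ge 2\omega_i(\xi)-\text{(something of order }\delta_i)$; more robustly, by concavity $2\omega_i(\xi)-\omega_i(2\eta+\xi)+\omega_i(2\eta-\xi)\ge 2\omega_i(\xi)-\omega_i(2\eta+\xi-(2\eta-\xi))=2\omega_i(\xi)-\omega_i(2\xi)\ge\omega_i(\xi)$, where the last inequality uses that $\omega_i(2\xi)\le\omega_i(\xi)+\omega_i'(\xi)\xi$ together with the explicit log form — actually simplest is to use concavity to get $2\omega_i(\xi) - \omega_i(2\eta+\xi) + \omega_i(2\eta-\xi)\ge \omega_i(\xi)$ directly from a chord comparison, since $\omega_i(2\eta+\xi)-\omega_i(2\eta-\xi)\le \omega_i(2\xi)-\omega_i(0^+)\le\omega_i(\xi)$ by subadditivity of concave functions vanishing at $0$. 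Then $\int_\xi^\infty\eta^{-1-\alpha}\,\dd\eta=\frac{\xi^{-\alpha}}{\alpha}$, and pulling out $\omega_i(\xi)$ yields $D_{\alpha,i}(\xi)\ge \frac{C_1 2^{\alpha-1}}{\alpha}\omega_i(\xi)\xi^{-\alpha}$ after adjusting where the lower endpoint is taken (one gets an extra factor handling $\int_{\xi/2}^\xi$ versus $\int_\xi^\infty$, hence the $2^{\alpha-1}$).

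The main obstacle I anticipate is \emph{not} any deep idea but rather the precise matching of numerical constants — choosing the exact sub-interval of integration ($\xi/4$ vs $\xi/2$ vs $\xi$), deciding whether to use the Taylor-remainder bound or a chord/concavity bound in each regime, and tracking the powers of $2$ — so that the final constants come out exactly as $\frac{C_1\mu(\mu+1)2^{\alpha-1}}{4(2-\alpha)}$ and $\frac{C_1 2^{\alpha-1}}{\alpha}$. A secondary subtlety is ensuring the windows $[\xi-2\eta,\xi+2\eta]$ stay within a single branch of the piecewise definition of $\omega_i$ so that the explicit formula for $\omega_i''$ (or for $\omega_i$) applies; this is why restricting $\eta$ to a small fraction of $\xi$ in the first regime is essential, and why in the second regime one pushes $\eta$ past $\xi$. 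Both fixes are standard and the estimate is robust to the exact choices, so I expect the lemma to follow cleanly once these bookkeeping details are pinned down.
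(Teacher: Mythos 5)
Your overall strategy (retain the first integral for $0<\xi\le\lambda$ and use the second derivative; retain the second integral for $\xi>\lambda$ and use the log branch) is the same as the paper's, but both of your key bounds have gaps.

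In the regime $\xi>\lambda$ the "subadditivity" step is wrong. Subadditivity of a concave function vanishing at $0$ gives $\omega_i(2\xi)\le 2\omega_i(\xi)$, \emph{not} $\omega_i(2\xi)-\omega_i(0^+)\le\omega_i(\xi)$. Plugging $\omega_i(2\xi)\le 2\omega_i(\xi)$ into your chain only yields $2\omega_i(\xi)-\omega_i(2\eta+\xi)+\omega_i(2\eta-\xi)\ge 0$, which is useless for a positive lower bound. The paper instead uses the \emph{explicit} logarithmic form of $\omega_i$ on $(\lambda,\infty)$: since $\omega_i(2\xi)=\omega_i(\xi)+\tfrac{\delta_i}{2}\log 2$ and $\omega_i(\xi)\ge\tfrac34\delta_i$ for $\xi\ge\lambda$, one gets $\omega_i(2\xi)\le\tfrac32\omega_i(\xi)$ and hence a numerator lower bound of $\tfrac12\omega_i(\xi)$ for all $\eta\ge\xi/2$. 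Your chord/subadditivity idea does not recover this factor, and the final constant $\tfrac{C_1 2^{\alpha-1}}{\alpha}$ comes precisely from integrating over $\eta\ge\xi/2$ with that $\tfrac12$ numerator; restricting to $\eta\ge\xi$ as you propose drops a factor of $2^\alpha$ and changes the constant.

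In the regime $0<\xi\le\lambda$ the inclusion $(\xi/2,3\xi/2)\subset(0,\lambda)$ you invoke fails for $\xi>2\lambda/3$, and no fixed shrinkage of the $\eta$-window fixes this uniformly over $\xi\in(0,\lambda]$ (if $\eta\le c\xi$ you need $\xi(1+2c)\le\lambda$, which excludes $\xi$ near $\lambda$). The paper's device is to use concavity together with the specific sign and monotonicity properties of $\omega_i''$: since $\omega_i''\le 0$ everywhere and $\omega_i''$ is nondecreasing on $(0,\lambda]$ (because $\mu<1$), one has $\omega_i''(\xi+s)+\omega_i''(\xi-s)\le 0+\omega_i''(\xi)=\omega_i''(\xi)$ even when $\xi+s>\lambda$, and then the Taylor-remainder identity gives $\omega_i(\xi+2\eta)+\omega_i(\xi-2\eta)-2\omega_i(\xi)\le 2\omega_i''(\xi)\eta^2$ with no need to confine the argument to the power branch. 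This is what lets the paper integrate all the way to $\eta=\xi/2$ and obtain the stated constant $\tfrac{C_1\mu(\mu+1)2^{\alpha-1}}{4(2-\alpha)}$. Your proposal as written does not supply that observation, so the step where you invoke the explicit power formula for $\omega_i''$ at $\xi+2\eta$ would fail when $\xi$ is close to $\lambda$.
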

The dissipation bound was originally derived in \cite{kiselev2007global}. We include a proof under our notations in the Appendix for self-consistency.

The next lemma provides a bound on the term $A_{\alpha,1}$. In the case when $d=1$, this bound was derived in \cite[Lemma 4.5]{do2018global}. However, in the multi-dimensional case, a significant enhancement is required specifically for the directions orthogonal to $e_1$.

\begin{lemma}\label{lem:Aalp}
Let $\omega_i(\xi)$ be the modulus of continuity given by \eqref{def:omeg1} or \eqref{def.omeg2}. Then for every $\alpha\in (0,2)$ and for any $\xi>0$, we have
\begin{equation}\label{es:Aalp}
  A_{\alpha,i}(\xi) \leq
  \begin{cases}
    C_2 \delta_i \lambda^{-\mu} \xi^{\mu - \alpha},\quad & \mathrm{for}\;\; 0<\xi \leq \lambda, \\
    C_2 \delta_i \xi^{-\alpha}, \quad & \mathrm{for}\;\; \xi \geq \lambda,
  \end{cases}
\end{equation}
where $C_2>0$ depends only on $\alpha$, $d$ and $\mu$.
\end{lemma}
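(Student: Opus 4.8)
The plan is to estimate the singular integral defining $A_{\alpha,i}(\xi)$ in \eqref{def:A-alp} by splitting $\R^d$ into regions where $\omega_i$ behaves differently, exploiting the rotational reduction $x-y = \xi e_1$ so the integrand only depends on $|\xi e_1 - z|$ relative to $\xi$ and $\lambda$. Write $z = (z_1, z')$ with $z' \in \R^{d-1}$, and recall that $\omega_i$ is concave, increasing, with $\omega_i(\eta) = \delta_i \lambda^{-1}\eta - \tfrac14\delta_i\lambda^{-1-\mu}\eta^{1+\mu}$ on $(0,\lambda]$ and logarithmic growth beyond. The key observation is that $\omega_i(|\xi e_1 - z|) - \omega_i(\xi)$ changes sign: it is negative when $z$ is roughly aligned with $e_1$ (so $|\xi e_1 - z| < \xi$) and positive otherwise, and by concavity the negative contributions near the $e_1$-axis can be bounded in terms of the positive ones only partially, so one must keep track of cancellation carefully — this is precisely the one-dimensional computation from \cite[Lemma 4.5]{do2018global} applied to the slice $z' = 0$.

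The main new point in dimension $d\geq 2$, flagged in the statement, is the enhancement from the directions orthogonal to $e_1$. Here I would argue as follows. For $z'$ with $|z'|$ of order $\xi$ or larger, one has $|\xi e_1 - z| \gtrsim |z'|$, and since $\omega_i$ is sublinear (concave, vanishing at $0$), the difference $\omega_i(|\xi e_1 - z|) - \omega_i(\xi)$ is controlled by $\omega_i'(\xi)\,(|\xi e_1 - z| - \xi)$ when $|\xi e_1 - z|$ is comparable to $\xi$, and by $\omega_i(|z'|)$ when $|z'| \gg \xi$; integrating $\int_{|z'|\gtrsim \xi} \frac{\omega_i(|z'|)}{|z'|^{d+\alpha}}\,\dd z'$ in the $(d-1)$-dimensional transverse variable converges precisely because $\omega_i$ grows slower than any power $\xi^{\alpha}$, giving a contribution of the order $\delta_i \xi^{-\alpha}$ (for $\xi \geq \lambda$) or $\delta_i \lambda^{-\mu}\xi^{\mu-\alpha}$ (for $\xi \leq \lambda$, since on that scale $\omega_i(\eta)\sim \delta_i\lambda^{-1}\eta$ and the relevant transverse integral is over $|z'| \lesssim \lambda$). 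The contribution from $|z'| \lesssim \xi$ and $z_1$ arbitrary reduces, after integrating out $z'$, to a one-dimensional singular integral of the same type as in \cite{do2018global}, producing the same bounds. One then collects the pieces and checks that each falls below $C_2 \delta_i \lambda^{-\mu}\xi^{\mu-\alpha}$ (resp. $C_2\delta_i\xi^{-\alpha}$) with a constant depending only on $\alpha, d, \mu$.

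Concretely, the steps in order: (1) reduce to $x - y = \xi e_1$ by rotation/translation invariance and write the integral in coordinates $(z_1, z')$; (2) split $\R^d$ into an inner region $\{|z| \leq \xi\}$, a middle region $\{\xi < |z| \leq \lambda\}$ (nonempty only when $\xi < \lambda$), and an outer region $\{|z| > \max(\xi,\lambda)\}$; (3) in the inner region use the second-order Taylor/concavity bound $\omega_i(|\xi e_1 - z|) - \omega_i(\xi) \leq \omega_i'(\xi)(|\xi e_1 - z| - \xi) \leq \omega_i'(\xi)\,\tfrac{|z'|^2}{2\xi}$ together with the principal-value cancellation along $z_1$ (which kills the linear-in-$z_1$ part), so the surviving inner contribution is $\lesssim \omega_i'(\xi)\,\xi^{-1}\int_{|z|\leq\xi}\frac{|z'|^2}{|z|^{d+\alpha}}\dd z \lesssim \omega_i'(\xi)\,\xi^{1-\alpha}$; (4) in the outer region bound $\omega_i(|\xi e_1 - z|) - \omega_i(\xi) \leq \omega_i(|\xi e_1 - z|) \leq \omega_i(2|z|)$ and integrate $\int_{|z|>\max(\xi,\lambda)} \frac{\omega_i(2|z|)}{|z|^{d+\alpha}}\dd z$, using the explicit form of $\omega_i$ to get $\lesssim \delta_i \max(\xi,\lambda)^{-\alpha}$ up to logarithmic factors that are absorbed since the exponent is strictly below $0$; (5) handle the middle region similarly using $\omega_i(\eta) \leq \delta_i\lambda^{-1}\eta$ there; (6) combine, using $\omega_i'(\xi) = \delta_i\lambda^{-1}(1 - \tfrac{1+\mu}{4}\lambda^{-\mu}\xi^{\mu}) \leq \delta_i\lambda^{-1}$ for $\xi \leq \lambda$ and $\omega_i'(\xi) = \tfrac{\delta_i}{2}\xi^{-1}$ for $\xi > \lambda$, to match the stated powers. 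The main obstacle is step (3)–(4) in the regime $\xi \leq \lambda$: one must confirm that the transverse (orthogonal) integration genuinely produces the claimed gain $\xi^{\mu - \alpha}$ rather than a worse power, which requires being careful that the sublinear correction $-\tfrac14\delta_i\lambda^{-1-\mu}\eta^{1+\mu}$ in $\omega_i$ is what supplies the decay — the leading linear term $\delta_i\lambda^{-1}\eta$ alone would give a divergent or borderline integral, so the cancellation/concavity must be tracked exactly as in \cite{do2018global}, now in $d$ dimensions.
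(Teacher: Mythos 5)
Your decomposition by spherical shells $\{|z|\leq\xi\}$, $\{\xi<|z|\leq\lambda\}$, $\{|z|>\max(\xi,\lambda)\}$ differs from the paper's, which first separates the one-dimensional singular integral $\omega_i(|\xi-z_1|)-\omega_i(\xi)$ from the transverse correction $\omega_i(|\xi e_1 - z|)-\omega_i(|\xi-z_1|)$ and then splits in $|z_1|$ rather than in $|z|$. Either decomposition can in principle work, but your outer-region estimate (step 4) has a genuine gap for $\xi\geq\lambda$. There you throw away the subtracted term, bounding $\omega_i(|\xi e_1-z|)-\omega_i(\xi)\leq\omega_i(2|z|)$. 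But for $\xi\geq\lambda$ the MOC is in its logarithmic regime, so
\[
\int_{|z|>\xi}\frac{\omega_i(2|z|)}{|z|^{d+\alpha}}\,\dd z
= \sigma_{d-1}\delta_i\,\xi^{-\alpha}\Big(\tfrac{3}{4\alpha}+\tfrac{1}{2\alpha}\log\tfrac{2\xi}{\lambda}+C_\alpha\Big)
\sim \omega_i(\xi)\,\xi^{-\alpha},
\]
not $\delta_i\,\xi^{-\alpha}$. The extra factor $\omega_i(\xi)$ (equivalently $\log(\xi/\lambda)$) cannot be ``absorbed''; it is precisely what must be lost. In the application at \eqref{eq:GE-rho} the bound is multiplied by $\omega_1(\xi)$ and compared to the dissipation $D_{\alpha,1}(\xi)\gtrsim\omega_1(\xi)\xi^{-\alpha}$ for $\xi>\lambda$. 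With your bound you get $\omega_1(\xi)^2\xi^{-\alpha}$, whose prefactor is bounded only by $\overline\rho$, not by $\delta$, so it cannot be made small and the proof does not close. The fix is to keep the subtraction: for $|z|\geq 2\xi\geq 2\lambda$ both $|\xi e_1 - z|$ and $\xi$ are $\geq\lambda$, so $\omega_i(|\xi e_1-z|)-\omega_i(\xi)=\tfrac{\delta_i}{2}\log\tfrac{|\xi e_1-z|}{\xi}\leq\tfrac{\delta_i}{2}\log\tfrac{2|z|}{\xi}$, and the resulting integral scales cleanly to $C\delta_i\xi^{-\alpha}$ — exactly the computation carried out in the paper's estimate of $I_{i,3}$ in the case $\xi>\lambda$.

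A secondary, more cosmetic issue: in step (3) the chained inequality $\omega_i(|\xi e_1-z|)-\omega_i(\xi)\leq\omega_i'(\xi)(|\xi e_1-z|-\xi)\leq\omega_i'(\xi)\tfrac{|z'|^2}{2\xi}$ is not literally correct; the middle quantity contains a linear-in-$z_1$ term $-\omega_i'(\xi)\,z_1$ which is not pointwise $\leq\omega_i'(\xi)\tfrac{|z'|^2}{2\xi}$ (consider $z_1<0$). You need to first split off $\omega_i(|\xi-z_1|)-\omega_i(\xi)$ (whose symmetrization in $z_1$ is $\leq0$ by concavity, as in the paper's treatment of $I_{i,1}$) and only then bound the genuinely transverse piece $\omega_i(|\xi e_1-z|)-\omega_i(|\xi-z_1|)$ by $\omega_i'(0^+)\cdot\tfrac{|z'|^2}{|\xi-z_1|+\sqrt{(\xi-z_1)^2+|z'|^2}}$. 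As written you are using a pointwise upper bound and then invoking cancellation of that upper bound, which is not a valid chain of inequalities, although the correct version of the argument does give the claimed inner-region bound.
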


\begin{proof}[Proof of Lemma \ref{lem:Aalp}]


Let us denote $z=(z_1,z_h)$ with $z_h =(z_2,\cdots,z_d)$.
We split $A_{\alpha,i}(\xi)$ given by \eqref{def:A-alp} as follows
\begin{align}\label{split:Aalp}
  A_{\alpha,i}(\xi)
  & = c_\alpha \,\mathrm{p.v.} \int_{|z_1|\leq 2\xi} \int_{\R^{d-1}}
  \frac{ \omega_i(|\xi -z_1|) -\omega_i(\xi) }{|z|^{d+\alpha}} \dd z_h \dd z_1 \nonumber \\
  & \quad + c_\alpha \, \mathrm{p.v.} \int_{|z_1|\leq 2\xi} \int_{\R^{d-1}}
  \frac{\omega_i(|\xi e_1 -z|) - \omega_i(|\xi-z_1|) }{|z|^{d+\alpha}} \dd z_h \dd z_1 \nonumber \\
  & \quad + c_\alpha \,\mathrm{p.v.} \int_{|z_1|\geq 2\xi} \int_{\R^{d-1}}
  \frac{- \omega_i(\xi) + \omega_i(|\xi e_1 -z|)  }{|z|^{d+\alpha}} \dd z_h \dd z_1 \nonumber \\
  & = : I_{i,1} + I_{i,2} + I_{i,3}.
\end{align}
For $I_{i,1}$, using symmetry, we get
\begin{align}
  I_{i,1} = & \,c_\alpha\,\mathrm{p.v.} \int_0^\xi \int_{\R^{d-1}}
  \frac{-2\omega_i(\xi) + \omega_i(\xi-z_1) + \omega_i(\xi+z_1) }{|z|^{d+\alpha }}
  \mathrm{d}z_h \mathrm{d}z_1  \nonumber\\
  & + c_\alpha\, \mathrm{p.v.}\int_\xi^{2\xi}\int_{\R^{d-1}}
  \frac{ \omega_i(\xi-z_1) -\omega_i(\xi)}{|z|^{d+\alpha }}
  \mathrm{d}z_h \mathrm{d}z_1 \nonumber \\
  & + c_\alpha \,\mathrm{p.v.}\int_\xi^{2\xi}\int_{\R^{d-1}}
  \frac{\omega_i(\xi+z_1) - \omega_i(\xi)}{|z|^{d+\alpha}} \mathrm{d}z_h \mathrm{d}z_1,\label{eq:Ii1}
\end{align}
and the first two integrals on the right-hand side of the above formula are both negative due to the concavity of $\omega_i$ ($i=1,2$),
which gives
\begin{align*}
  I_{i,1} \leq c_\alpha \,\int_\xi^{2\xi} \int_{\R^{d-1}} \frac{\omega_i(\xi +z_1) -\omega_i(\xi)}{|z|^{d+\alpha}}
  \dd z_h \dd z_1
  \leq c_\alpha C_{d,\alpha} \int_\xi^{2\xi} \frac{\omega_i(\xi +z_1) -\omega_i(\xi)}{z_1^{1+\alpha}} \dd z_1 ,
\end{align*}
with $C_{d,\alpha} = \int_{\R^{d-1}} \frac{1}{(1+ |z_h|^2)^{\frac{d+\alpha}{2}}} \dd z_h <\infty$.
Arguing as the the estimation in \cite[Lemma 4.4]{miao2021global} (with $\gamma = \frac{\delta_i}{2}$), we get
\begin{equation}\label{es:Aalp-1}
  I_{i,1} \leq
  \begin{cases}
    c_\alpha C_{d,\alpha} \delta_i \overline{M}_\alpha(\xi,\lambda),
    \quad & \textrm{for}\;\; 0<\xi \leq \lambda, \\
    \frac{ c_\alpha C_{d,\alpha}}{\alpha} \delta_i \xi^{-\alpha},\quad & \textrm{for}\;\;  \xi \geq \lambda ,
  \end{cases}
\end{equation}
where
\begin{equation*}
\overline{M}_\alpha(\xi,\lambda) : =
\begin{cases}
  \frac{1}{\alpha^2 (1-\alpha)} \lambda^{-\alpha},\quad & \textrm{for}\;\; 0<\alpha <1, \\
  \lambda^{-1}\big(\log \frac{\lambda}{\xi} + \frac{5}{4} \big), \quad & \textrm{for}\;\; \alpha=1, \\
  \big(\frac{1}{\alpha-1} + \frac{5}{4}\big) \lambda^{-1} \xi^{1-\alpha}, \quad & \textrm{for}\;\; 1<\alpha <2.
\end{cases}
\end{equation*}
In order to compare with the dissipation contribution, we state the following inequality,
where we only use the fact that $\frac{\xi}{\lambda} \in (0,1]$ and $\mu\in (0,\min\{1,\alpha\})$,
\begin{equation}\label{eq:overC-alp}
  \overline{M}_\alpha(\xi,\lambda)
  \leq \overline{C}_{\alpha,\mu} \lambda^{-\mu} \xi^{\mu-\alpha},\quad
  \textrm{with}\;\;
  \overline{C}_{\alpha,\mu}
  : = \begin{cases}
    \frac{1}{\alpha^2(1-\alpha)},\quad & \textrm{for}\;\;0<\alpha<1, \\
    \frac{5}{4} + \sup\limits_{r\geq 1}\frac{\log r}{r^{1-\mu}},\quad & \textrm{for}\;\; \alpha=1 ,\\
    \frac{1}{\alpha-1} + \frac{5}{4}, \quad & \textrm{for}\;\; 1<\alpha <2.
  \end{cases}
\end{equation}

For $I_{i,2}$ given by \eqref{split:Aalp}, we separately consider two cases: for every $0<\xi \leq 2\lambda$,
noting that
\begin{align}\label{eq:fact0}
  \sqrt{(\xi-z_1)^2+|z_h|^2}-|\xi-z_1| = \tfrac{|z_h|^2}{ \sqrt{|\xi-z_1|^2+|z_h|^2}+ |\xi-z_1|},
\end{align}
and using the fact that $\omega_i'(\eta)\leq \omega_i'(0^+)= \delta_i \lambda^{-1}$ for all $\eta\in \R_+$, we get
\begin{align}\label{es.Aalp-2low}
  I_{i,2}
  & \leq c_\alpha \,\mathrm{p.v.} \int_{|z_1|\leq 2\xi} \int_{\R^{d-1}} \frac{\delta_i \lambda^{-1}}{|z|^{d+\alpha}}
  \frac{|z_h|^2}{ \sqrt{|\xi-z_1|^2+|z_h|^2}+|\xi-z_1|} \mathrm{d}z_h\mathrm{d}z_1 \nonumber\\
  & \leq c_\alpha  \,\mathrm{p.v.}\int_{|z_1|\leq\frac{\xi}{2}}\int_{|z_h|\leq\xi}
  \frac{\delta_i \lambda^{-1}}{|z|^{d+\alpha}} \frac {|z_h|^2}{\xi/2}
  \mathrm{d}z_h \mathrm{d} z_1 +  c_\alpha \,\mathrm{p.v.} \int_{|z_1|\leq 2\xi}\int_{|z_h|\ge\xi}
  \frac{\delta_i \lambda^{-1}}{|z|^{d+\alpha }} |z_h|
  \mathrm{d}z_h\mathrm{d}z_1 \nonumber\\
  & \quad + c_\alpha \,\mathrm{p.v.} \int_{\frac{\xi}{2} \leq |z_1| \leq 2\xi} \int_{|z_h|\leq\xi}
  \frac{\delta_i \lambda^{-1} }{|z|^{d+\alpha}} |z_h| \mathrm{d} z_h\mathrm{d}z_1 \nonumber\\
  &\leq C_0 c_\alpha \delta_i \lambda^{-1} \bigg( \xi^{-1}\int_{|z|\leq 3\xi} \frac{1}{|z|^{d-2 +\alpha}} \dd z
  +  \xi \int_{|z_h|\geq \xi} \frac{1}{|z_h|^{d-1+\alpha}} \dd z_h
  +  \sigma_{d-2} \xi^{1-\alpha} \bigg) \nonumber \\
  & \leq C_0 c_\alpha \frac{\sigma_{d-1} +\sigma_{d-2}}{\alpha(2-\alpha)} \delta_i \lambda^{-1} \xi^{1-\alpha},
\end{align}
with $\sigma_n$ denoting the area of $n$-dimensional sphere for $n\ge1$ (setting $\sigma_0 = 1$);
whereas for every $ \xi\geq 2\lambda$, we have
\begin{align}\label{split:Aalp-12}
  I_{i,2} & = c_\alpha \,\mathrm{p.v.}
  \int_{|z_1|\leq 2\xi} \int_{|z_h|\geq \xi} \frac{\omega_i(|(\xi-z_1,z_h)|) -\omega_i(|\xi-z_1|)}{|z|^{d+\alpha}}
  \dd z_h \dd z_1 \nonumber\\
  & \quad + c_\alpha \, \mathrm{p.v.}
  \int_{|z_1|\leq \frac{\xi}{2}} \int_{|z_h|\leq \xi} \frac{\omega_i(|(\xi-z_1,z_h)|)
  -\omega_i(|\xi-z_1|)}{|z|^{d+\alpha}}  \dd z_h \dd z_1 \nonumber \\
  & \quad +  c_\alpha \,\mathrm{p.v.}
  \int_{\{\frac{\xi}{2}\leq |z_1|\leq 2\xi\}\cap \{|z_1 -\xi|\geq \lambda\}} \int_{|z_h|\leq \xi} \frac{\omega_i(|(\xi-z_1,z_h)|) -\omega_i(|\xi-z_1|)}{|z|^{d+\alpha}}
  \dd z_h \dd z_1 \nonumber\\
  & \quad + c_\alpha \, \mathrm{p.v.}
  \int_{|z_1-\xi|\leq \lambda} \int_{|z_h|\leq \xi} \frac{\omega_i(|(\xi-z_1,z_h)|)
  -\omega_i(|\xi-z_1|)}{|z|^{d+\alpha}}  \dd z_h \dd z_1 \nonumber \\
  & = : I_{i,2,1} + I_{i,2,2} + I_{i,2,3} + I_{i,2,4}.
\end{align}
By using \eqref{def.omg}, \eqref{eq:fact0} and the following fact that
$\sup_{r\in [1/2,\infty)} r^{-\frac{1}{2}} \log\sqrt{1+r^2} \leq C_0$,
we find
\begin{align}\label{es:Aalp-21}
  I_{i,2,1} & = c_\alpha \,\mathrm{p.v.} \int_{\{|z_1|\leq 2\xi\}\cap \{|z_1-\xi|\geq \lambda\}}
  \int_{|z_h|\geq \xi} \frac{\omega_i(|(\xi-z_1,z_h)|) -\omega_i(|\xi-z_1|)}{|z|^{d+\alpha}}\dd z_h \dd z_1 \nonumber \\
  &\quad + c_\alpha \,\mathrm{p.v.} \int_{|z_1-\xi|\leq \lambda}
  \int_{|z_h|\geq \xi} \frac{\omega_i(|(\xi-z_1,z_h)|) -\omega_i(|\xi-z_1|)}{|z|^{d+\alpha}}\dd z_h \dd z_1 \nonumber \\
  & \leq c_\alpha \,\mathrm{p.v.} \int_{|z_1|\leq 2\xi} \int_{|z_h|\geq \xi} \frac{\frac{\delta_i}{2}
  \log \sqrt{1+ \frac{|z_h|^2}{|\xi-z_1|^2}}}{|z|^{d+\alpha}}   \dd z_h \dd z_1 \nonumber \\
  & \quad + c_\alpha \,\mathrm{p.v.} \int_{|z_1-\xi|\leq \lambda} \int_{|z_h|\geq \xi}
  \frac{\delta_i \lambda^{-1}}{2|z|^{d+\alpha}}
  \frac{|z_h|^2}{ \sqrt{|\xi-z_1|^2+|z_h|^2}+ |\xi-z_1|} \dd z_h \dd z_1 \nonumber \\
  & \leq C_0 c_\alpha \delta_i \int_{|z_1|\leq 2\xi} \int_{|z_h|\geq \xi} \frac{1}{|z_h|^{d+\alpha}}
  \frac{|z_h|^{1/2}}{|\xi-z_1|^{1/2}} \dd z_h \dd z_1
  + C_0 c_\alpha \delta_i \int_{|z_h|\geq \xi} \frac{1}{|z_h|^{d+\alpha}} |z_h| \dd z_h \nonumber \\
  & \leq C_0 c_\alpha \sigma_{d-2} \delta_i \bigg(\xi^{-\frac{1}{2} -\alpha} \int_{|z_1|\leq 2\xi} |\xi-z_1|^{-\frac{1}{2}} \dd z_1
  + \frac{1}{\alpha} \xi^{-\alpha}\bigg)
  \leq \frac{C_0 c_\alpha \sigma_{d-2}}{\alpha} \delta_i \xi^{-\alpha}.
\end{align}
By virtue of \eqref{def.omg}, \eqref{eq:fact0} and the fact that
\begin{align*}
  \log |(\eta,z_h)| -\log |\eta| \leq |\eta|^{-1} \big(|(\eta,z_h)| - |\eta| \big),\quad  \forall |\eta|>0,z_h\in\R^{d-1},
\end{align*}
we estimate $I_{i,2,2}$ as follows that for every $\xi \geq 2\lambda$,
\begin{align*}
  I_{i,2,2} & = c_\alpha \,\mathrm{p.v.} \int_{|z_1|\leq \frac{\xi}{2}}
  \int_{|z_h|\leq \xi}
  \frac{\delta_i \big(\log |(\xi-z_1,z_h)| - \log |\xi-z_1|\big)}{2|z|^{d+\alpha}} \dd z_h \dd z_1
  \nonumber \\
  & \leq c_\alpha \,\mathrm{p.v.} \int_{|z_1|\leq \frac{\xi}{2}} \int_{|z_h|\leq \xi}
  \frac{\delta_i |\xi-z_1|^{-1}}{2|z|^{d+\alpha}}
  \frac{|z_h|^2}{ \sqrt{|\xi-z_1|^2+|z_h|^2}+ |\xi-z_1|} \dd z_h \dd z_1 \nonumber \\
  & \leq C_0 c_\alpha \delta_i \xi^{-2} \int_{|z|\leq 2\xi} \frac{1}{|z|^{d-2+\alpha}} \dd z
  \leq \frac{C_0 c_\alpha \sigma_{d-1}}{2-\alpha} \delta_i \xi^{-\alpha}.
\end{align*}
Arguing as \eqref{es:Aalp-21} gives
\begin{align*}
  I_{i,2,3} & = c_\alpha \,\mathrm{p.v.} \int_{\{\frac{\xi}{2}\leq |z_1| \leq 2\xi\}\cap \{|z_1-\xi|\geq \lambda\}}
  \int_{|z_h|\leq \xi}
  \frac{\delta_i \big(\log |(\xi-z_1,z_h)| - \log |\xi-z_1|\big)}{2|z|^{d+\alpha}} \dd z_h \dd z_1
  \nonumber \\
  & \leq c_\alpha \int_{\frac{\xi}{2}\leq|z_1|\leq 2\xi} \int_{|z_h|\leq \xi}
  \frac{\delta_i \log \sqrt{1 + \frac{\xi^2}{|\xi-z_1|^2}}}{2(\xi/2)^{d+\alpha}} \dd z_h \dd z_1 \nonumber \\
  & \leq C_0 c_\alpha \sigma_{d-2} \delta_i \xi^{-1-\alpha}  \int_{\frac{\xi}{2}\leq |z_1|\leq 2\xi }
  \frac{\xi^{1/2}}{|\xi-z_1|^{1/2}} \dd z_1 \leq C_0 c_\alpha \sigma_{d-2} \delta_i \xi^{-\alpha};
\end{align*}
Note that $|z|\geq |z_1| \geq \frac{\xi}{2}$ for every $ \xi \geq 2\lambda$ and $|z_1-\xi|\leq \lambda$, we directly have
\begin{align*}
  I_{i,2,4} &  \leq c_\alpha\, \mathrm{p.v.} \int_{|z_1-\xi|\leq \lambda} \int_{|z_h|\leq \xi}
  \frac{\delta_i \lambda^{-1}}{ |z|^{d+\alpha}} \frac{|z_h|^2}{ \sqrt{|\xi-z_1|^2+|z_h|^2}+ |\xi-z_1|} \dd z_h \dd z_1
  \nonumber \\
  & \leq c_\alpha \int_{|z_1-\xi|\leq \lambda} \int_{|z_h|\leq \xi}
  \frac{\delta_i \lambda^{-1}}{(\xi/2)^{d+\alpha}} |z_h| \dd z_h \dd z_1 \leq C_0 2^d c_\alpha \sigma_{d-2} \delta_i \xi^{-\alpha}.
\end{align*}
Gathering \eqref{es.Aalp-2low}, \eqref{split:Aalp-12} and the above estimates on $I_{i,2,1}$ - $I_{i,2,4}$ leads to that
\begin{equation}\label{es:Aalp-2}
  I_{i,2} \leq
  \begin{cases}
    C \delta_i \lambda^{-1} \xi^{1-\alpha},\quad & \textrm{for}\;\; 0<\xi \leq \lambda, \\
    C \delta_i \xi^{-\alpha}, \quad & \textrm{for}\;\; \xi\geq \lambda,
  \end{cases}
\end{equation}
with $C>0$ depending only on $\alpha,d$.

For $I_{i,3}$ given by \eqref{split:Aalp}, we in fact can control a larger quantity $\widetilde{I}_{i,3}$ given by
\begin{align}\label{def:tild-Ii3}
  \widetilde{I}_{i,3} : = c_\alpha \int_{ |z|\geq 2\xi}
  \frac{\omega_i(|\xi e_1 -z|) -\omega_i(\xi) }{|z|^{d+\alpha}} \dd z.
\end{align}
Noting that from concavity $\omega_i(|\xi e_1-z|) -\omega_i(\xi) \leq \omega_i(\xi+|z|)
-\omega_i(\xi) \leq \omega_i(|z|)$, and exactly arguing as \cite[(5.8)]{miao2021global} and \eqref{eq:overC-alp},
we have that for every $0<\xi \leq \lambda$,
\begin{align*}
  I_{i,3} \leq \widetilde{I}_{i,3} 
  \leq c_\alpha \, \mathrm{p.v.} \int_{|z|\geq 2 \xi} \frac{\omega_i(|z|)}{|z|^{d+\alpha}} \dd z
  \leq c_\alpha \, \sigma_{d-1} \int_\xi^\infty \frac{\omega_i(\eta)}{\eta^{1+\alpha}} \dd \eta
  \leq c_\alpha \sigma_{d-1} \overline{C}_{\alpha,\mu} \delta_i \lambda^{- \mu}
  \xi^{\mu- \alpha},
\end{align*}
with $\overline{C}_{\alpha,\mu}$ the constant appearing in \eqref{eq:overC-alp};
while for every $ \xi > \lambda$, by using \eqref{def.omg} and the change of variables, we infer that
\begin{align*}
  I_{i,3} \leq \widetilde{I}_{i,3} = c_\alpha \,\mathrm{p.v.}\int_{|z|\geq 2\xi}
  \frac{\delta_i \big(\log |\xi e_1 -z | -\log \xi \big)}{2|z|^{d+\alpha}} \dd z_h \dd z_1
  \leq c_\alpha \widetilde{C}_{\alpha,d}  \delta_i \xi^{-\alpha} ,
\end{align*}
where (noting that $|e_1 - z|\leq 2 |z|$)
\begin{align*}
  \widetilde{C}_{\alpha,d} := &\, \mathrm{p.v.} \int_{|z|\geq 2}
  \frac{ \log |e_1 - z| }{2|z|^{d+\alpha}} \dd z_h \dd z_1  \\
  \leq &\, \mathrm{p.v.} \int_{|z|\geq 2} \frac{\log(2|z|)}{2|z|^{d+\alpha}} \dd z
  = \sigma_{d-1} \,\mathrm{p.v.}\int_2^\infty \frac{\log(2r)}{2r^{1+\alpha}} \dd r < +\infty.
\end{align*}
Thus combining the above two estimates yields
\begin{equation}\label{es:Aalp-3}
  I_{i,3}\leq \widetilde{I}_{i,3} \leq
  \begin{cases}
    c_\alpha \sigma_{d-1} \overline{C}_{\alpha,\mu} \,\delta_i \lambda^{- \mu}
    \xi^{\mu - \alpha},\quad & \textrm{for}\;\; 0<\xi \leq \lambda, \\
    c_\alpha \widetilde{C}_{\alpha,d}  \,\delta_i \xi^{-\alpha}, \quad & \textrm{for}\;\; \xi \geq \lambda.
  \end{cases}
\end{equation}
Collecting \eqref{split:Aalp}, \eqref{es:Aalp-1}, \eqref{eq:overC-alp}, \eqref{es:Aalp-2}, \eqref{es:Aalp-3} yields the desired estimate
\eqref{es:Aalp}.
\end{proof}

\subsection{Evolution of the MOC on $u$}\label{subsec:GE-u}
Next, we provide general estimates that lead to \eqref{aim-equi} under the scenario \eqref{break.mode}--\eqref{break.mode1}. From equation $\eqref{eq:Euni}_2$, we observe that
\begin{align}\label{eq:par-t-u}
  &\partial_t\big(u(x)-u(y)\big)= -\big( {u\partial_{x_1}u }(x)-{u\partial_{x_1}u }(y)\big)
  + \big(\mathcal{C}_\alpha( u,\rho)(x)- \mathcal{C}_\alpha( u,\rho)(y)\big).
\end{align}
The first term on the right-hand side of \eqref{eq:par-t-u} represents the advection term, which can be estimated similarly to \eqref{es:N1} as
\begin{equation}\label{es.upxu}
  \big| {u\partial_{x_1}u }(x)-{u\partial_{x_1}u }(y)\big|
   \leq |u(x)-u(y)|\, \partial_\xi\omega_2 (\xi,t_1)
   = e^{- c_0\,t_1} |u(x)-u(y)|\, \omega_2'(\xi).
\end{equation}
We will apply different estimates to $|u(x)-u(y)|$ in different cases, as mentioned in Remark \ref{rmk:drift}.


Our main focus is on the latter term of \eqref{eq:par-t-u}. Without loss of generality, we can again assume that
\begin{align*}
  x=\big(\tfrac{\xi}{2},0,\dots,0\big), \quad y=\big(-\tfrac{\xi}{2},0,\dots,0\big).
\end{align*}
We split the term as follows:
\begin{align}\label{split:Calp}
  & \quad \mathcal{C}_\alpha( u,\rho)(x)- \mathcal{C}_\alpha( u,\rho)(y)\nonumber\\
  & =c_\alpha \,\mathrm{p.v.}\int_{\R^d} \frac{ \rho(x+z)\big(u(x+z)-u(x)\big) -
  \rho(y+z)\big(u(y+z)-u(y)\big) }{|z|^{d+\alpha}} \dd z \nonumber \\
  & = c_\alpha\,\int_{\{z: \rho(x+z) \leq \rho(y+z)\}}
  \frac{\rho(x+z)[(u(x+z)-u(y+z))-(u(x)-u(y))]}{|z|^{d+\alpha}} \dd z\nonumber\\
  &\quad  + c_\alpha \, \int_{\{z: \rho(x+z) \leq \rho(y+z)\}} \frac{\big(\rho(x+z)-\rho(y+z)\big) \big(u(y+z)-u(y)\big)}{|z|^{d+\alpha}}
  \mathrm{d}z \nonumber \\
  & \quad + c_\alpha\,\int_{\{z: \rho(x+z) > \rho(y+z)\}}
  \frac{\rho(y+z)[(u(x+z)-u(y+z))-(u(x)-u(y))]}{|z|^{d+\alpha}} \dd z\nonumber\\
  &\quad  + c_\alpha \, \int_{\{z: \rho(x+z) > \rho(y+z)\}}
  \frac{\big(\rho(x+z)-\rho(y+z)\big) \big(u(x+z)-u(x)\big)}{|z|^{d+\alpha}}
  \mathrm{d}z \nonumber \\
  & =: J_1 + J_2 + J_3 + J_4.
\end{align}

For terms $J_1$ and $J_3$, using the scenario \eqref{break.mode}--\eqref{break.mode1} and \eqref{eq:rho-MP}, we have
\begin{align*}
  J_1 + J_3 \leq - \underline{\rho}\, c_\alpha\,\mathrm{p.v.}\int_{\mathbb{R}^d} \frac{\omega_2(\xi,t_1) - u(x+z) + u(y+z)}{|z|^{d+\alpha}} \mathrm{d}z,
\end{align*}
and similar to the treatment of $D_{\alpha,1}(\xi)$ in \eqref{eq:D} above, we can infer that
\begin{align}\label{es:J2-13}
  J_1 + J_3 \leq - \underline{\rho} \,e^{-c_0 t_1}\, D_{\alpha,2}(\xi),
\end{align}
where $D_{\alpha,2}(\xi)$ is defined in \eqref{es:Dalp} and satisfies \eqref{es:Dalp2}.

For the terms $J_2$ and $J_4$, using the scenario \eqref{break.mode}--\eqref{break.mode1}, we have
\begin{align}\label{es:J2-24}
  J_2 + J_4 & = c_\alpha \,\int_{\{z: \rho(x+z) \leq \rho(y+z)\}}
  \big(\rho(y+z)-\rho(x+z) \big) \frac{u(x) - u(y+z) - \omega_2(\xi,t_1)}{|z|^{d+\alpha}} \dd z \nonumber \\
  & \quad + c_\alpha \, \int_{\{z: \rho(x+z) > \rho(y+z)\}}
  \big(\rho(x+z) - \rho(y+z) \big) \frac{u(x+z) - u(y) - \omega_2(\xi,t_1) }{|z|^{d+\alpha}} \dd z \nonumber \\
  & \leq c_\alpha \,
  \int_{\{z:\rho(x+z) \leq \rho(y+z)\}} |\rho(x+z) - \rho(y+z)|
  \frac{\omega_2(|\xi e_1 - z|,t_1) - \omega_2(\xi,t_1)}{|z|^{d+\alpha}} \dd z
  \nonumber \\
  & \quad + c_\alpha \,
  \int_{\{z:\rho(x+z) > \rho(y+z)\}} |\rho(x+z) - \rho(y+z)|
  \frac{\omega_2(|\xi e_1 + z|,t_1) - \omega_2(\xi,t_1)}{|z|^{d+\alpha}} \dd z
  \nonumber \\
  & \leq  c_\alpha \,e^{-c_0 t_1}
  \int_{\mathbb{R}^d} |\rho(x+ z) - \rho(y+z)|
  \frac{\omega_2(|\xi e_1 - z|) - \omega_2(\xi)}{|z|^{d+\alpha}} \dd z =: \mathcal{J}.
\end{align}

\begin{remark}\label{rmk:comparison}
 Let us compare the alignment force $\mathcal{C}_\alpha(u,\rho)$ and the linear fractional dissipation $\mathcal{C}_\alpha(u,1)$. We can repeat the calculation in \eqref{split:Calp} to estimate $\mathcal{C}_\alpha(u,1)(x)-\mathcal{C}_\alpha(u,1)(y)$ by setting $\rho\equiv1$. In this case, the terms $J_1+J_3$ still represent the dissipation, and we have the estimate \eqref{es:J2-13}. However, for $\mathcal{C}_\alpha(u,1)$, we find that $J_2+J_4=0$. Hence, the difference between the alignment force $\mathcal{C}_\alpha(u,\rho)$ and the linear fractional dissipation $\mathcal{C}_\alpha(u,1)$ is reflected in the term $\mathcal{J}$. It is crucial to control this term using the dissipation.
\end{remark}

The integrand in $\mathcal{J}$ exhibits a similar structure to $A_{\alpha,2}(\xi)$, which was defined in \eqref{def:A-alp}. One might expect that it can be controlled by $\omega_1(\xi)A_{\alpha,2}(\xi)$. However, this is not the case. To illustrate this point, we decompose $\mathcal{J}$ in a similar manner as \eqref{split:Aalp}, obtaining the following decomposition:
\begin{align}\label{split:K}
  \mathcal{J} & = c_\alpha \,e^{-c_0 t_1}
  \int_{|z|\geq 2\xi} |\rho(x+ z) - \rho(y+z)|
  \frac{\omega_2(|\xi e_1 - z|) - \omega_2(\xi)}{|z|^{d+\alpha}} \dd z \nonumber \\
  & \quad + c_\alpha \,e^{-c_0 t_1} \int_{|z|\leq 2\xi} |\rho(x+ z) - \rho(y+ z)|
  \frac{\omega_2(|\xi e_1 - z|) - \omega_2(|\xi-z_1|)}{|z|^{d+\alpha}} \dd z \nonumber \\
  & \quad + c_\alpha \,e^{-c_0 t_1}
  |\rho(x) - \rho(y)|\int_{|z|\leq 2\xi}
  \frac{\omega_2(|\xi - z_1|) - \omega_2(\xi)}{|z|^{d+\alpha}} \dd z \nonumber \\
  & \quad + c_\alpha \,e^{-c_0 t_1}
  \int_{|z|\leq 2\xi} \Big( |\rho(x+ z) - \rho(y+z)| - |\rho(x) -\rho(y)| \Big)
  \frac{\omega_2(|\xi - z_1|) - \omega_2(\xi)}{|z|^{d+\alpha}} \dd z \nonumber \\
  & = : J_5 + J_6 + J_7 + J_8.
\end{align}

For $J_5$, $J_6$, and $J_7$, we extract $\omega_1(\xi)$ and treat the remaining terms using similar estimates as for $A_{\alpha,2}(\xi)$. Specifically, for $J_5$, we observe that $\omega_2(|\xi e_1 -z|) - \omega_2(\xi) > 0$ for every $|z|\geq 2\xi$. Utilizing \eqref{break.mode1} and \eqref{def:tild-Ii3}, we deduce that
\begin{align}\label{es:K1}
  J_5 \leq c_\alpha \, e^{-c_0 t_1} \omega_1(\xi) \int_{|z|\geq 2\xi} \frac{\omega_2(|\xi e_1-z |) -\omega_2(\xi)}{|z|^{d+\alpha}} \dd z
  \leq e^{-c_0 t_1} \omega_1(\xi) \widetilde{I}_{2,3},
\end{align}
where $\widetilde{I}_{2,3}$ satisfies \eqref{es:Aalp-3}.
For $J_6$, we recall that $I_{2,2}$ is given by \eqref{split:Aalp}, and using \eqref{break.mode1}, we have
\begin{align}\label{es:K2}
  J_6 \leq e^{-c_0 t_1} \omega_1(\xi) I_{2,2},
\end{align}
where $I_{2,2}$ satisfies \eqref{es:Aalp-2}.
Regarding $J_7$, the estimate is similar to that of $I_{i,1}$, and by discarding the negative contributions in the estimate of $I_{i,1}$, we obtain
\begin{equation}\label{es:K3}
  J_7 \leq e^{-c_0 t_1} \omega_1(\xi)\times
  \begin{cases}
    c_\alpha C_{d,\alpha}\overline{C}_{\alpha,\mu} \delta_2 \lambda^{-\mu} \xi^{\mu-\alpha} ,
    \quad & \textrm{for}\;\; 0<\xi \leq \lambda, \\
    \frac{ c_\alpha C_{d,\alpha}}{\alpha} \delta_2 \xi^{-\alpha},\quad & \textrm{for}\;\;  \xi \geq \lambda .
  \end{cases}
\end{equation}
Gathering \eqref{es:K1}, \eqref{es:K2}, \eqref{es:K3} yields
\begin{align}\label{es:K1-2-3}
  J_5 + J_6 + J_7 \leq e^{-c_0 t_1} \omega_1(\xi) \times
  \begin{cases}
    C_2 \delta_2 \lambda^{-\mu} \xi^{\mu - \alpha},\quad & \mathrm{for}\;\; 0<\xi \leq \lambda, \\
    C_2 \delta_2 \xi^{-\alpha}, \quad & \mathrm{for}\;\; \xi \geq \lambda,
  \end{cases}
\end{align}
where $C_2>0$ is the constant appearing in Lemma \ref{lem:Aalp}.

The most challenging term is $J_8$, which is related to the dangerous singular integral \eqref{eq:difference} near $y=x$ (or $z=0$). We notice that in the estimate \eqref{eq:Ii1} for the corresponding term $I_{i,1}$ in $A_{\alpha,i}$, by symmetrizing $z_1$ around 0 and utilizing the concavity of $\omega_i$, we can obtain a favorable negative sign for the first term in \eqref{eq:Ii1}. However, this is no longer the case with the prefactor $|\rho(x+ z) - \rho(y+z)| - |\rho(x) -\rho(y)|$ in $J_8$, which does not have a specific sign. Therefore, we can only establish bounds on this term.
Using the triangle inequality, we have
\begin{equation}\label{es:K4}
\begin{split}
  J_8 & \leq c_\alpha \, e^{-c_0 t_1}
  \int_{|z|\leq 2\xi} \big|\rho(x+z)-\rho(x) - (\rho(y+z)- \rho(y)) \big|
  \frac{\big|\omega_2(|\xi -z_1|) - \omega_2(\xi)\big|}{|z|^{d+\alpha}} \dd z =: \mathcal{K}.
\end{split}
\end{equation}

The following Lemma provides estimates on the bound $\mathcal{K}.$
\begin{lemma}\label{lem:K5}
  Let $\alpha\in (0,2)$ and $\omega_i(\xi)$ ($i=1,2$) be the modulus of continuity given by \eqref{def:omeg1}
and \eqref{def.omeg2}. There exists a positive constant $C_3$ depends only on $\alpha$ and $d$ such that for every
$0<\xi \leq \lambda$,
\begin{equation}\label{es:K5-1}
  \mathcal{K} \leq C_3 e^{-c_0 t_1} \delta_1 \delta_2 \lambda^{-2} \xi^{2-\alpha},
\end{equation}
and for every $\lambda < \xi\leq \Xi_2$,
\begin{equation}\label{es:K5-2}
  \mathcal{K} \leq e^{-c_0 t_1}\times
  \begin{cases}
    C_3 \big( \delta_2 (\lambda^{-1}\xi)^{\alpha-1} + V_0 \big)\,\omega_1(\xi)\xi^{-\alpha},
    \quad & \mathrm{for}\quad 1<\alpha<2, \\
    C_3 (\delta_2 + V_0)\, \omega_1(\xi) \xi^{-\alpha}, \quad & \mathrm{for}\quad 0<\alpha \leq1.
  \end{cases}
\end{equation}
\end{lemma}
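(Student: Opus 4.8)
The plan is to estimate $\mathcal{K}$ by exploiting two different bounds on the increment $\big|\rho(x+z)-\rho(x)-(\rho(y+z)-\rho(y))\big|$: on the one hand, the triangle inequality together with the MOC $\omega_1$ gives $\big|\rho(x+z)-\rho(x)-(\rho(y+z)-\rho(y))\big|\leq 2\omega_1(|z|)$ (and also $\leq 2\omega_1(\xi)$ when $\xi$ is large, via \eqref{eq:rho-MP}); on the other hand, for small $|z|$ we use the Lipschitz bound $\omega_1'(0^+)=\delta_1\lambda^{-1}$ to get $\big|\rho(x+z)-\rho(x)-(\rho(y+z)-\rho(y))\big|\leq 2\delta_1\lambda^{-1}|z|$. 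Similarly, the factor $\big|\omega_2(|\xi-z_1|)-\omega_2(\xi)\big|$ is bounded by $\delta_2\lambda^{-1}|z_1|\leq\delta_2\lambda^{-1}|z|$ via the uniform Lipschitz bound on $\omega_2$, and alternatively (for the regime $|z|\gtrsim\xi$) by $\omega_2(|z|)$ using concavity and monotonicity. Decomposing the domain $\{|z|\leq 2\xi\}$ into the near part $\{|z|\leq\xi\}$ (or some fixed fraction of $\xi$) and the far part $\{\xi\lesssim|z|\leq 2\xi\}$, and matching the available bounds on each piece, is the skeleton of the argument.

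First I would treat the case $0<\xi\leq\lambda$. Here every relevant $|z|\leq 2\xi\leq 2\lambda$ lies (up to a harmless constant) in the regime where $\omega_1$ and $\omega_2$ are both essentially linear, so I use $\big|\rho(x+z)-\rho(x)-(\rho(y+z)-\rho(y))\big|\leq 2\delta_1\lambda^{-1}|z|$ and $\big|\omega_2(|\xi-z_1|)-\omega_2(\xi)\big|\leq \delta_2\lambda^{-1}|z|$, which yields
\[
  \mathcal{K}\leq 2c_\alpha e^{-c_0 t_1}\delta_1\delta_2\lambda^{-2}\int_{|z|\leq 2\xi}\frac{|z|^2}{|z|^{d+\alpha}}\,\dd z
  = C\, e^{-c_0 t_1}\delta_1\delta_2\lambda^{-2}\int_0^{2\xi}r^{1-\alpha}\,\dd r,
\]
and the last integral converges (since $\alpha<2$) to a constant multiple of $\xi^{2-\alpha}$, giving \eqref{es:K5-1}. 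This case is essentially routine once the two linear bounds are invoked.

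The case $\lambda<\xi\leq\Xi_2$ is the substantive one, and it is where the dichotomy in $\alpha$ enters. On the near region $\{|z|\leq\lambda\}$ one still has both linearizations available, producing a contribution of order $\delta_1\delta_2\lambda^{-2}\int_0^\lambda r^{1-\alpha}\dd r$, which one must re-express in terms of $\omega_1(\xi)\xi^{-\alpha}$; since $\omega_1(\xi)\sim\delta_1\log(\xi/\lambda)$ for $\xi>\lambda$, this matching is where the ``$\delta_2$'' coefficient in \eqref{es:K5-2} is born, and for $1<\alpha<2$ the extra $(\lambda^{-1}\xi)^{\alpha-1}$ factor appears because $\int_0^\lambda r^{1-\alpha}\dd r$ scales like $\lambda^{2-\alpha}$ rather than $\xi^{2-\alpha}$. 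On the intermediate/far region $\{\lambda\leq|z|\leq 2\xi\}$, I would bound $\big|\rho(x+z)-\rho(x)-(\rho(y+z)-\rho(y))\big|$ crudely by $2\omega_1(\xi)$ when $|z|\sim\xi$ (using that $\rho$ obeys $\omega_1$ and that $\xi\leq\Xi_2\leq\frac12$, hence $\omega_1(|z|)\leq\omega_1(2\xi)\lesssim\omega_1(\xi)$ by the log-structure of $\omega_1$), and then bound $\big|\omega_2(|\xi-z_1|)-\omega_2(\xi)\big|$ by $\omega_2(|z|)\lesssim\omega_2(\xi)\lesssim\delta_2$; integrating $|z|^{-d-\alpha}$ over an annulus of inner radius $\sim\lambda$ gives the factor $\lambda^{-\alpha}$, which after the same $\omega_1(\xi)\xi^{-\alpha}$-normalization contributes another $\delta_2$-type term. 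Finally, the region near $z_1=\xi$ (where $\omega_2$ has its kink but is continuous) needs the same care as in the proof of Lemma \ref{lem:Aalp}: split off $\{|z_1-\xi|\leq\lambda\}$, on which $|z|\gtrsim\xi$, so the kernel is harmless and one simply uses $\big|\omega_2(|\xi-z_1|)-\omega_2(\xi)\big|\leq\omega_2(\lambda)\lesssim\delta_2$ together with $\big|\rho(x+z)-\rho(x)-(\rho(y+z)-\rho(y))\big|\leq 2V_0$ (the oscillation bound on $\rho$-differences via the support of $\rho$ and \eqref{eq:rho-MP})—this is what produces the ``$V_0$'' term in \eqref{es:K5-2}. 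Summing the three regional contributions and re-expressing everything against $\omega_1(\xi)\xi^{-\alpha}$ gives \eqref{es:K5-2}.

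The main obstacle I anticipate is the bookkeeping in the second case: one must simultaneously (i) choose the splitting radii so that on each piece the stronger of the two available bounds on each factor is used, (ii) keep track of whether integrals like $\int r^{1-\alpha}\dd r$ accumulate near $0$ or near $\xi$ (which flips between $\lambda^{2-\alpha}$ and $\xi^{2-\alpha}$ and is the source of the $(\lambda^{-1}\xi)^{\alpha-1}$ factor for $\alpha>1$), and (iii) convert all the resulting powers of $\lambda$ and $\xi$ into the target form $\omega_1(\xi)\xi^{-\alpha}$ using $\omega_1(\xi)\gtrsim\delta_1$ for $\xi\geq\lambda$, being careful that the implied constant $C_3$ depends only on $\alpha,d$ and not on $\delta_1,\delta_2,\lambda$. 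The handling of the kink of $\omega_2$ at $\xi$ is a minor technical point that parallels $I_{i,2,4}$ in the previous lemma and should not cause real difficulty.
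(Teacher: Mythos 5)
Your Case 1 ($0<\xi\leq\lambda$) matches the paper and is fine. Case 2 ($\lambda<\xi\leq\Xi_2$), however, has a genuine gap that I do not think your decomposition can close as written. On your near region $\{|z|\leq\lambda\}$ you bound $\big|\omega_2(|\xi-z_1|)-\omega_2(\xi)\big|\leq\delta_2\lambda^{-1}|z_1|$ using the global Lipschitz constant $\omega_2'(0^+)=\delta_2\lambda^{-1}$. This gives a contribution of order $\delta_1\delta_2\lambda^{-\alpha}\sim\delta_2\,\omega_1(\xi)\,(\xi/\lambda)^\alpha\,\xi^{-\alpha}$, with an extraneous factor $(\xi/\lambda)^\alpha$ that is unbounded over $(\lambda,\Xi_2]$ and cannot be absorbed. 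The essential observation you are missing is that when $\xi\geq2\lambda$ and $|z_1|\leq\xi/2$, both $|\xi-z_1|$ and $\xi$ lie in the logarithmic branch of $\omega_2$, where $\omega_2'(\eta)=\tfrac{\delta_2}{2\eta}$, so the relevant Lipschitz constant is $\sim\delta_2/\xi$ rather than $\delta_2/\lambda$. This yields $\big|\omega_2(|\xi-z_1|)-\omega_2(\xi)\big|\lesssim\tfrac{\delta_2}{\xi}|z_1|$, which is smaller by exactly the factor $\lambda/\xi$ you need. This is also why the paper splits on $\{|z_1|\leq\xi/2\}$ versus $\{|z_1|\geq\xi/2\}$ rather than on $\{|z|\leq\lambda\}$ versus $\{|z|\geq\lambda\}$: the first set is precisely where the improved $\omega_2$-Lipschitz bound is available, and on the complement $|z|\geq\xi/2$ so the kernel is bounded by $(\xi/2)^{-d-\alpha}$ and no cancellation is needed.

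Two further claims in your far-region treatment are simply wrong. First, $\omega_2(\xi)\lesssim\delta_2$ is false: for $\xi$ near $\Xi_2$ one has $\omega_2(\xi)$ close to $\omega_2(\Xi_2)=V_0$, which is a fixed data-dependent quantity unrelated to the small parameter $\delta_2$; this is exactly why a $V_0$ must appear in \eqref{es:K5-2}. Second, the $V_0$ term does not come from bounding the $\rho$-increment by an oscillation of $\rho$ — $V_0$ is the oscillation of $u_0$, not of $\rho_0$ — and indeed no such identity would be available. In the paper's argument $V_0$ enters through $\omega_2(\xi)\leq\omega_2(\Xi_2)=V_0$ when estimating the $\{|z_1|\geq\xi/2\}$ piece, while the $\rho$-increment there is still bounded by $2\omega_1(|z|)\leq 2\omega_1(2\xi)\lesssim\omega_1(\xi)$. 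If you correct the near-region Lipschitz constant and re-derive where $V_0$ enters, your decomposition of the $\{\lambda\leq|z|\leq 2\xi\}$ annulus can likely be made to work, but as written the argument would overshoot the target by an unbounded factor.
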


\begin{remark}\label{rmk:K}
 In the estimate \eqref{es:K5-2}, the term $\mathcal{K}$ is controlled by
  \[\mathcal{K}\lesssim\omega_1(\xi)\xi^{-\alpha},\]
 with a potentially large constant coefficient. On the other hand, the dissipation \eqref{es:J2-13} has a lower bound, as shown in \eqref{es:Dalp2},
 \[J_1+J_3\lesssim-\omega_2(\xi)\xi^{-\alpha}.\]
To ensure that $\mathcal{K}$ is controlled by the dissipation, we choose $\delta_1$ to be much smaller than $\delta_2$. This choice allows us to control $\mathcal{K}$ effectively through the dissipation term.

 This idea of choosing different values for $\delta_1$ and $\delta_2$ seems to play a critical role in controlling $\mathcal{K}$, particularly in the critical case when $\alpha=1$.
 \end{remark}

\begin{proof}[Proof of Lemma \ref{lem:K5}]
  For every $0<\xi \leq 2\lambda$, noting that
$\omega_i(\eta) \leq \omega_i'(0^+) \eta = \delta_i \lambda^{-1} \eta$ for every $\eta>0$,
we see that
\begin{align*}
  \mathcal{K} & \leq 2 c_\alpha e^{-c_0 t_1} \int_{|z|\leq 2\xi}
  \omega_1(|z|) \frac{\omega_2(|z|)}{|z|^{d+\alpha}} \dd z \\
  & \leq 2 c_\alpha e^{-c_0 t_1} \delta_1 \delta_2 \lambda^{-2}
  \int_{|z|\leq 2\xi} |z|^{-d-\alpha +2}\dd z
  \leq e^{-c_0 t_1} \tfrac{8 c_\alpha \sigma_{d-1}}{2-\alpha} \delta_1 \delta_2 \lambda^{-2} \xi^{2-\alpha} .
\end{align*}
In particular, by virtue of the fact that $\omega_1(\xi)\geq \omega_1(\lambda) = \frac{3}{4}\delta_1$
for every $\xi\geq \lambda$, the above estimate gives that for every $\lambda< \xi\leq 2\lambda$,
\begin{align*}
  \mathcal{K} \leq e^{-c_0 t_1} \tfrac{64 c_\alpha \sigma_{d-1}}{2-\alpha} \delta_2 \omega_1(\xi) \xi^{-\alpha}.
\end{align*}
For every $2\lambda \leq \xi \leq  \Xi_2$ (with no loss of generality assuming that $\Xi_2 > 2\lambda$),
we have the following splitting
\begin{equation*}
\begin{split}
  \mathcal{K} & \leq  2 c_\alpha  e^{-c_0 t_1}
  \int_{\{z: |z|\leq 2\xi, |z_1|\leq \frac{\xi}{2}\}} \omega_1(|z|)
  \frac{\big|\omega_2(|\xi -z_1|) - \omega_2(\xi) \big|}{|z|^{d+\alpha}} \dd z \\
  & \quad + 2 c_\alpha  e^{-c_0 t_1}
  \int_{\{z: |z|\leq 2\xi, |z_1| \geq \frac{\xi}{2}\}} \omega_1(|z|)
  \frac{\big|\omega_2(|\xi -z_1|) - \omega_2(\xi) \big|}{|z|^{d+\alpha}} \dd z
  = : \mathcal{K}_1 + \mathcal{K}_2.
\end{split}
\end{equation*}
Noting that $|\xi -z_1|\geq \frac{\xi}{2}\geq \lambda$ for every $|z_1|\leq \frac{\xi}{2}$ and $\xi\geq 2\lambda$,
and by using \eqref{def:omeg1}, \eqref{def.omeg2} and the change of variables,
we infer that for every $2\lambda \leq \xi \leq \Xi_2$,
\begin{align*}
  \mathcal{K}_1 & = 2 c_\alpha e^{-c_0 t_1} \int_{\{z: |z|\leq 2\xi, |z_1|\leq \frac{\xi}{2}\}}
  \omega_1(|z|) \frac{\delta_2 \big| \log |1 - \frac{z_1}{\xi} |  \big|}{2|z|^{d +\alpha}} \dd z \\
  & \leq c_\alpha e^{-c_0 t_1} \delta_2 \xi^{-1} \int_{|z|\leq 2\xi} \frac{\omega_1(|z|)}{|z|^{d-1+\alpha}} \dd z \\
  & \leq e^{-c_0 t_1} c_\alpha \sigma_{d-1} \delta_2 \xi^{-1} \int_0^{2\xi} \frac{\omega_1(\eta)}{\eta^\alpha} \dd \eta \\
  & \leq e^{-c_0 t_1} c_\alpha \sigma_{d-1} \delta_2 \xi^{-1}  \Big(\int_0^\lambda \frac{\delta_1 \lambda^{-1}\eta }{\eta^\alpha} \dd \eta
  + \omega_1(2\xi) \int_\lambda^{2\xi} \frac{1}{\eta^\alpha} \dd \eta \Big) \\
  & \leq e^{-c_0 t_1} c_\alpha \sigma_{d-1} \delta_2 \xi^{-1} \times
  \begin{cases}
    \frac{1}{2-\alpha} \delta_1 \lambda^{1-\alpha} +\frac{2}{\alpha-1} \omega_1(\xi) \lambda^{1-\alpha},
    \quad & \textrm{for}\quad 1<\alpha<2, \\
    \delta_1 +  2\omega_1(\xi) \log \frac{2\xi}{\lambda}, \quad & \textrm{for}\quad \alpha=1, \\
    \frac{1}{2-\alpha} \delta_1 \lambda^{1-\alpha} + \frac{2}{1-\alpha} \omega_1(\xi) (2\xi)^{1-\alpha},
    \quad & \textrm{for}\quad 0<\alpha<1  ,
  \end{cases}\\
  & \leq e^{-c_0 t_1} c_\alpha \sigma_{d-1}  \times
  \begin{cases}
    \frac{2}{(2-\alpha)(\alpha-1)} \delta_2 \omega_1(\xi) \xi^{-1} \lambda^{1-\alpha},
    \quad & \textrm{for}\quad 1<\alpha<2, \\
    4\big(\delta_2 + V_0 \big)\omega_1(\xi) \xi^{-1}, \quad & \textrm{for}\quad \alpha=1, \\
    \frac{6}{1-\alpha} \delta_2\omega_1(\xi) \xi^{-\alpha},
    \quad & \textrm{for}\quad 0<\alpha<1 ,
  \end{cases}
\end{align*}
where in the last inequality we have used the facts that $\delta_1 \leq \frac{4}{3}\omega_1(\xi)$,
and in particular for $\alpha=1$ (using \eqref{def:Xi2})
\[\log \tfrac{2\xi}{\lambda}\leq \log 2 + \log \tfrac{\Xi_2}{\lambda} \leq 1 + 2 \delta_2^{-1} V_0.\]

For $\mathcal{K}_2$, in view of the fact $|z|\geq |z_1|\geq \frac{\xi}{2}$ and the concavity of $\omega_i(\xi)$ ($i=1,2$), we have
\begin{align*}
  \mathcal{K}_2 & \leq 2 c_\alpha e^{-c_0 t_1} \int_{\{z: |z|\leq 2\xi, |z_1|\geq \frac{\xi}{2}\}}
  \omega_1(|z|) \frac{\omega_2(|z|)}{(\xi/2)^{d+\alpha}} \dd z \\
  & \leq e^{-c_0 t_1} 2^{2d+1+\alpha} c_\alpha \sigma_{d-1} \omega_1(2\xi) \omega_2(2\xi) \xi^{-\alpha} \\
  & \leq e^{-c_0 t_1} 2^{2d +3+\alpha} c_\alpha \sigma_{d-1} V_0\,
  \omega_1(\xi) \xi^{-\alpha},
\end{align*}
where in the last line we have used that $\omega_2(\xi) \leq \omega_2(\Xi_2) = V_0$.
Collecting the above estimates on $\mathcal{K}_1$ and $\mathcal{K}_2$ leads to the inequality \eqref{es:K5-2}
in the case $2\lambda \leq \xi \leq \Xi_2$, as desired.
\end{proof}

When $\alpha\in(1,2)$, the estimate in \eqref{es:K5-2} is not ideal, as $\lambda^{-1}\xi$ can be very big in the region $\xi\gg\lambda$. To overcome this difficulty, we derive a refined MOC for $\rho$, replacing $\omega_1$ in the estimate. This refinement utilizes the relation \eqref{eq:Grho}, which we recall here:
\begin{equation}\label{eq:Grho}
	\rho = \partial_{x_1}\Lambda^{-\alpha}u-\Lambda^{-\alpha}G.
\end{equation}
We state the following lemma, which demonstrates that the MOC of $\rho$ can be controlled by the MOCs of $u$ and $G$. The proof follows the approach in \cite[Lemma 4.4]{do2018global}, and it will be provided in the Appendix.

\begin{lemma}\label{lem.Omg}
Let $1<\alpha<2$. Suppose $u(t)$ obeys $\omega_2(\xi,t)$ defined in \eqref{def.omeg2t}.
Then, for any $\tilde{x}$, $\tilde{y}\in \R^d$ with $\tilde{\xi} = |\tilde{x} - \tilde{y}|$, we have
\begin{align}\label{es:Omg}
  |\rho(\tilde{x},t_1) - \rho(\tilde{y},t_1)| \leq  \, \widetilde{C}_4
  \bigg(\int_0^{\tilde{\xi}}\frac{\omega_2(\eta,t)}{\eta^{2-\alpha}}\dd\eta
  + \tilde{\xi}\int_{\tilde{\xi}}^\infty \frac{\omega_2(\eta,t)}{\eta^{3-\alpha}}\dd \eta\bigg)
  + C_0\overline{\rho} \|F_0\|_{L^\infty} \tilde{\xi},
\end{align}
where $\widetilde{C}_4>0$ depends only on $\alpha,d$ and $C_0>0$ an absolute constant. In particular, for every $\tilde\xi>\lambda$,
\begin{align}\label{es.Omg.11}
  |\rho(\tilde{x},t_1) - \rho(\tilde{y},t_1)| \leq
  \tfrac{2\widetilde{C}_4}{(\alpha-1)(2-\alpha)} \omega_2(\tilde\xi)\tilde\xi^{\alpha-1} + C_0\overline{\rho} \|F_0\|_{L^\infty} \tilde{\xi}.
\end{align}
\end{lemma}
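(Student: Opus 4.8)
The plan is to exploit the representation \eqref{eq:Grho}, namely $\rho = \partial_{x_1}\Lambda^{-\alpha}u - \Lambda^{-\alpha}G$, and estimate the increment $\rho(\tilde x,t_1)-\rho(\tilde y,t_1)$ by splitting into the two contributions. For the $\Lambda^{-\alpha}G$ piece, since $1<\alpha<2$ the operator $\Lambda^{-\alpha}$ has a nonnegative, integrable kernel $c_{d,\alpha}|z|^{-(d-\alpha)}$, and from \eqref{es:G} we have $\|G(t_1)\|_{L^\infty}\leq \overline\rho\,\|F_0\|_{L^\infty}$; a standard kernel-difference estimate then gives $|\Lambda^{-\alpha}G(\tilde x)-\Lambda^{-\alpha}G(\tilde y)|\leq C_0\,\overline\rho\,\|F_0\|_{L^\infty}\,\tilde\xi$, which is exactly the last term in \eqref{es:Omg}. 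For the main term $\partial_{x_1}\Lambda^{-\alpha}u$, I would use that this is a Fourier multiplier of order $-( \alpha-1)<0$ whose (real-variable) kernel $K(z)$ is smooth away from the origin and satisfies the pointwise bounds $|K(z)|\lesssim |z|^{-(d+1-\alpha)}$ and $|\nabla K(z)|\lesssim |z|^{-(d+2-\alpha)}$ — these follow from the homogeneity and smoothness of the symbol $i\zeta_1|\zeta|^{-\alpha}$ together with the fact that its only singularity is at $\zeta=0$, so the kernel decays rapidly at infinity and one only needs the local behavior.

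With $w(z):=\partial_{x_1}\Lambda^{-\alpha}u(z)$ written as $w = K * u$, I would subtract a constant (say $u(\tilde x)$, using that $K$ has zero mean in the appropriate principal-value sense after the cancellation coming from oddness of $\partial_{x_1}$) to rewrite
\[
w(\tilde x) - w(\tilde y) = \int_{\R^d}\big(K(\tilde x - z') - K(\tilde y - z')\big)\big(u(z') - u(\tilde x)\big)\,\dd z',
\]
and then split the integration region into the near zone $\{|z'-\tilde x|\leq 2\tilde\xi\}$ and the far zone $\{|z'-\tilde x|> 2\tilde\xi\}$. In the near zone I bound each kernel term separately by $|z'-\tilde x|^{-(d+1-\alpha)}$ (resp.\ with $\tilde y$) and use the MOC bound $|u(z')-u(\tilde x)|\leq \omega_2(|z'-\tilde x|,t)$ (which holds by \eqref{break.mode1}, or rather by the running hypothesis that $u(t_1)$ obeys $\omega_2(\cdot,t_1)$); integrating in polar coordinates produces $\int_0^{C\tilde\xi}\omega_2(\eta,t)\,\eta^{\alpha-2}\,\dd\eta$, matching the first term of \eqref{es:Omg}. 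In the far zone I use the mean-value bound $|K(\tilde x-z')-K(\tilde y-z')|\lesssim \tilde\xi\,|z'-\tilde x|^{-(d+2-\alpha)}$ together with $|u(z')-u(\tilde x)|\leq\omega_2(|z'-\tilde x|,t)$, giving $\tilde\xi\int_{2\tilde\xi}^\infty \omega_2(\eta,t)\,\eta^{\alpha-3}\,\dd\eta$, which is the second term of \eqref{es:Omg}. Combining the three contributions yields \eqref{es:Omg} with $\widetilde C_4$ depending only on $\alpha,d$.

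For the specialization \eqref{es.Omg.11}, I would insert the explicit form \eqref{def.omg}--\eqref{def.omeg2t} of $\omega_2$. Using $\omega_2(\eta,t)=e^{-c_0 t}\omega_2(\eta)$ with $\omega_2$ given by \eqref{def.omeg2}, one has $\omega_2(\eta)\leq \delta_2\lambda^{-1}\eta$ for $\eta\leq\lambda$ and $\omega_2(\eta)=\tfrac34\delta_2+\tfrac12\delta_2\log(\eta/\lambda)$ for $\eta>\lambda$; since $\tilde\xi>\lambda$, the two integrals $\int_0^{\tilde\xi}\omega_2(\eta)\eta^{\alpha-2}\dd\eta$ and $\tilde\xi\int_{\tilde\xi}^\infty\omega_2(\eta)\eta^{\alpha-3}\dd\eta$ both converge (here $1<\alpha<2$ is used to make the first integral converge at $0$ after the linear bound, and to make the second converge at $\infty$ against the logarithmic growth of $\omega_2$), and a direct computation bounds each by a constant multiple of $\tfrac{1}{(\alpha-1)(2-\alpha)}\omega_2(\tilde\xi)\tilde\xi^{\alpha-1}$, using the monotonicity of $\omega_2$ to compare $\omega_2(\eta)$ with $\omega_2(\tilde\xi)$ on the relevant ranges. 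The linear term $C_0\overline\rho\|F_0\|_{L^\infty}\tilde\xi$ is kept as is. The main technical obstacle is getting clean pointwise bounds on $K$ and $\nabla K$ for the multiplier $i\zeta_1|\zeta|^{-\alpha}$ — but since this is exactly the type of Riesz-type kernel estimate used in \cite[Lemma~4.4]{do2018global}, I would follow that argument (the one-dimensional version is $K(z)\sim \mathrm{sign}(z)|z|^{\alpha-1}$ up to constants), and the only new point relative to the $d=1$ case is the routine polar-coordinate integration of the radial MOC against $|z|^{-(d+1-\alpha)}$, which I would carry out as above.
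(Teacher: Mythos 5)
Your proposal follows essentially the same route as the paper: split $\rho = \partial_{x_1}\Lambda^{-\alpha}u - \Lambda^{-\alpha}G$ via \eqref{eq:Grho}, bound the $G$-piece by a Lipschitz estimate on $\Lambda^{-\alpha}G$ using \eqref{es:G}, bound the $u$-piece by a kernel near/far decomposition against the MOC, and then specialize to $\tilde\xi>\lambda$ by plugging in the explicit form of $\omega_2$. The only difference is that where you re-derive the kernel estimate for $\partial_{x_1}\Lambda^{-\alpha}u$ from scratch, the paper simply invokes \cite[Lemma 3.2]{miao2012global}, whose proof is precisely the $|K(z)|\lesssim|z|^{-(d+1-\alpha)}$, $|\nabla K(z)|\lesssim|z|^{-(d+2-\alpha)}$ near/far splitting you describe, so the two are equivalent in substance.

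Two small factual slips, neither load-bearing: the Riesz potential kernel $c_{d,\alpha}|z|^{-(d-\alpha)}$ is locally integrable but \emph{not} integrable at infinity on $\R^d$ (on $\T^d$ one uses the $\dot W^{1,\infty}$ mapping property of $\Lambda^{-\alpha}$ for $\alpha>1$, as the paper does, rather than kernel integrability); and in $d=1$ the kernel of $\partial_x\Lambda^{-\alpha}$ is $\sim\mathrm{sign}(z)|z|^{\alpha-2}$, not $|z|^{\alpha-1}$ (homogeneity $-(d+1-\alpha)=\alpha-2$). These do not affect the argument.
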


By utilizing \eqref{es.Omg.11} to replace $\omega_1$ in the estimate \eqref{es:K5-2}, we obtain an improved estimate for $\mathcal{K}$.
\begin{lemma}\label{lem:K5-subc}
  Let $1<\alpha <2$ and $\omega_2(\xi,t)$ be the modulus of continuity given by \eqref{def.omeg2}.
There exists a constant $C_4>0$ depending only on $\alpha$ and $d$ such that for every $\lambda< \xi \leq \Xi_2$,
\begin{align}\label{es:K5-subc2}
  \mathcal{K} \leq e^{-c_0 t_1} C_4 \Big(\overline{\rho} \|F_0\|_{L^\infty}
  \omega_2(\xi) \lambda^{1-\alpha} + V_0\, \omega_2(\xi) \lambda^{-1} \Big).
\end{align}
\end{lemma}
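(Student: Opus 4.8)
The plan is to revisit the decomposition $\mathcal{K} \leq \mathcal{K}_1 + \mathcal{K}_2$ from the proof of Lemma~\ref{lem:K5}, but now replace the crude bound $|\rho(x+z) - \rho(y+z)| \leq \omega_1(|z|)$ (and the bound on $|\rho(x)-\rho(y)|$) by the refined modulus-of-continuity estimate for $\rho$ furnished by Lemma~\ref{lem.Omg}. The starting point is the inequality \eqref{es:K4}, namely
\[
\mathcal{K} = c_\alpha\, e^{-c_0 t_1}\int_{|z|\leq 2\xi} \big|\rho(x+z)-\rho(x) - (\rho(y+z)-\rho(y))\big|\,\frac{\big|\omega_2(|\xi-z_1|)-\omega_2(\xi)\big|}{|z|^{d+\alpha}}\,\dd z,
\]
and we bound the density increment prefactor by $|\rho(x+z)-\rho(x)| + |\rho(y+z)-\rho(y)|$, each term of which is controlled by the right-hand side of \eqref{es:Omg} with $\tilde\xi = |z|$. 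The two pieces in \eqref{es:Omg} — the local integral $\int_0^{|z|}\omega_2(\eta,t)\eta^{\alpha-2}\dd\eta$ and the tail $|z|\int_{|z|}^\infty \omega_2(\eta,t)\eta^{\alpha-3}\dd\eta$, together with the linear term $C_0\overline\rho\|F_0\|_{L^\infty}|z|$ — each need to be inserted and estimated separately.

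\textbf{First step: the region $|z_1|\geq \xi/2$.} Here $|z|\geq |z_1|\geq\xi/2\geq\lambda$, so by \eqref{es.Omg.11} the density increment is bounded by $\frac{2\widetilde C_4}{(\alpha-1)(2-\alpha)}\omega_2(|z|)|z|^{\alpha-1} + C_0\overline\rho\|F_0\|_{L^\infty}|z|$; since $\omega_2$ is concave and $|z|\leq 2\xi\leq 2\Xi_2$, one has $\omega_2(|z|)\leq \omega_2(2\xi)\leq 2\omega_2(\xi)$ and $|z|^{\alpha-1}\lesssim \xi^{\alpha-1}$. For the $\omega_2$-factor one uses concavity to write $|\omega_2(|\xi-z_1|)-\omega_2(\xi)|\leq \omega_2(|z_1|)\leq\omega_2(2\xi)\lesssim\omega_2(\xi)$, and the remaining kernel $\int_{\xi/2\leq|z_1|\leq 2\xi,\,|z|\leq 2\xi}|z|^{-d-\alpha}\dd z$ behaves like $\xi^{-\alpha}$. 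This produces contributions of the shape $\overline\rho\|F_0\|_{L^\infty}\omega_2(\xi)\xi^{1-\alpha}$ and $V_0\,\omega_2(\xi)\xi^{-\alpha}$ (using $\omega_2(\xi)\xi^{\alpha-1}\lesssim V_0$, which follows from $\omega_2(\xi)\leq V_0$ combined with $\xi\leq\Xi_2$), both of which are dominated by the right-hand side of \eqref{es:K5-subc2} since $\xi>\lambda$ makes $\xi^{1-\alpha}\leq\lambda^{1-\alpha}$ and $\xi^{-\alpha}\leq\lambda^{-\alpha}$, wait — more carefully, for $1<\alpha<2$ and $\xi>\lambda$ one has $\lambda^{1-\alpha}>\xi^{1-\alpha}$, so these terms are indeed absorbed.

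\textbf{Second step: the region $|z_1|\leq\xi/2$.} This is the delicate piece, mirroring $\mathcal{K}_1$ in Lemma~\ref{lem:K5}. Here $|\xi-z_1|\geq\xi/2\geq\lambda$, so $|\omega_2(|\xi-z_1|)-\omega_2(\xi)| = \frac{\delta_2}{2}|\log(1-z_1/\xi)|\lesssim \delta_2|z_1|/\xi$, but for the density increment one must now split the $z$-integral into $|z|\leq\lambda$ and $\lambda<|z|\leq 2\xi$, using the small-scale bound from \eqref{es:Omg} (where $\omega_2(\eta,t)\approx \delta_2\lambda^{-1}e^{-c_0t_1}\eta$ near $0$, giving the local integral $\lesssim\delta_2\lambda^{-1}|z|^{\alpha-1}$ for $\alpha<2$ and the tail $\lesssim\delta_2\lambda^{-1}|z|$... ) plus the linear piece. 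Carrying the $|z|\leq\lambda$ part and the $\lambda<|z|\leq2\xi$ part through, integrating in $z_h$ and $z_1$ separately as in the proof of Lemma~\ref{lem:K5}, one collects terms of order $\omega_2(\xi)\lambda^{1-\alpha}$ (from the linear $\overline\rho\|F_0\|_{L^\infty}$ piece) and $\omega_2(\xi)\lambda^{-1}$-type quantities after bounding the surviving $\xi$-powers using $\lambda<\xi\leq\Xi_2$ and $\omega_2(\xi)\leq V_0$. The key bookkeeping is that every occurrence of $\xi$ with a negative or small exponent that arises in the integration can be traded for $\lambda$ at the cost of a constant depending only on $\alpha,d$, because $\xi\in(\lambda,\Xi_2]$ and $\Xi_2 = \lambda e^{2\delta_2^{-1}V_0 - 3/2}$ does \emph{not} help us here — rather, it is the monotonicity of the relevant power functions on $(\lambda,\infty)$ together with $\omega_2(\xi)\xi^{\alpha-1}\leq 2^{2-\alpha}V_0$ (valid since $\omega_2$ is increasing and $\omega_2(\Xi_2)=V_0$ only gives $\omega_2(\xi)\leq V_0$, so one needs $\xi^{\alpha-1}\leq\Xi_2^{\alpha-1}$; to avoid the $\Xi_2$-dependence one instead keeps the bound in the form $\omega_2(\xi)\lambda^{1-\alpha}$ and $V_0\omega_2(\xi)\lambda^{-1}$ as written) that does the work.

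\textbf{Main obstacle.} The hard part is the near-diagonal region $|z_1|\leq\xi/2$ with $|z|$ small: there one cannot use \eqref{es.Omg.11} (which requires $\tilde\xi>\lambda$) and must instead go back to the full estimate \eqref{es:Omg}, tracking how the local integral $\int_0^{|z|}\omega_2(\eta,t)\eta^{\alpha-2}\dd\eta$ interacts with the logarithmic factor $|\log(1-z_1/\xi)|$ and the singular kernel $|z|^{-d-\alpha}$ — the exponent arithmetic must come out so that the $z_h$-integral converges ($d-1$ dimensions, needing the integrand to decay faster than $|z_h|^{-(d-1)}$ at infinity and be integrable at $0$) and the resulting $|z|$-power is $> -1$ after including the $\eta$-integration. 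Getting these exponents right for the whole range $1<\alpha<2$, and confirming that the final collected constant depends only on $\alpha,d$ (not on $\delta_1,\delta_2,\lambda,V_0$), is where the care is required; the rest is a routine repetition of the splitting already performed in the proofs of Lemma~\ref{lem:Aalp} and Lemma~\ref{lem:K5}.
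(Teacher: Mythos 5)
Your proposal correctly identifies the key ingredient (replacing the crude bound $\omega_1(|z|)$ on the density increment by the refined bound from Lemma~\ref{lem.Omg}), but it is a plan rather than a proof, and the parts you do write out contain a genuine slip and an over-complication.

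\textbf{Over-complication.} You propose to reuse the decomposition $\mathcal{K}\leq\mathcal{K}_1+\mathcal{K}_2$ into $|z_1|\lessgtr\xi/2$ from Lemma~\ref{lem:K5}, which forces you to track the logarithmic factor $|\log(1-z_1/\xi)|$ and the $z_h$-integration, and leads you to designate the near-diagonal small-$|z|$ region as the ``main obstacle.'' The paper's proof avoids all of this: since $\omega_2$ is concave with $\omega_2(0^+)=0$ it is subadditive, so $|\omega_2(|\xi-z_1|)-\omega_2(\xi)|\leq\omega_2(|z_1|)\leq\omega_2(|z|)$, and $\mathcal{K}$ collapses to a single radial integral
\[
\mathcal{K}\lesssim\int_{|z|\leq2\xi}\Big(\overline\rho\|F_0\|_{L^\infty}|z|+\tfrac{1}{(\alpha-1)(2-\alpha)}\omega_2(|z|)\,|z|^{\alpha-1}\Big)\frac{\omega_2(|z|,t_1)}{|z|^{d+\alpha}}\,\dd z,
\]
which is then evaluated directly after splitting $\int_0^\lambda+\int_\lambda^{2\xi}$. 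There is no $z_1/z_h$ anisotropy, no log factor, and no delicate exponent bookkeeping.

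\textbf{Arithmetic slip in the first step.} In the region $|z_1|\geq\xi/2$ you obtain a contribution from the $\omega_2(|z|)|z|^{\alpha-1}$ part of Lemma~\ref{lem.Omg}'s bound of order $\omega_2(\xi)\xi^{\alpha-1}\cdot\omega_2(\xi)\cdot\xi^{-\alpha}=\omega_2(\xi)^2\xi^{-1}$. You then factor this as $\omega_2(\xi)\cdot(\omega_2(\xi)\xi^{\alpha-1})\cdot\xi^{-\alpha}\lesssim V_0\,\omega_2(\xi)\xi^{-\alpha}$, but this goes in the wrong direction: for $\alpha>1$ and small $\lambda$ one has $\xi^{-\alpha}\leq\lambda^{-\alpha}\not\lesssim\lambda^{-1}$, so this does \emph{not} absorb into the target $V_0\,\omega_2(\xi)\lambda^{-1}$. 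The correct estimate is simply $\omega_2(\xi)^2\xi^{-1}\leq V_0\,\omega_2(\xi)\xi^{-1}\leq V_0\,\omega_2(\xi)\lambda^{-1}$ since $\omega_2(\xi)\leq V_0$ and $\xi>\lambda$. Your ``wait --- more carefully'' remark addresses only the $\|F_0\|_{L^\infty}$-piece and leaves this discrepancy unresolved.

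\textbf{The ``main obstacle'' is left open.} You are right that \eqref{es.Omg.11} is only stated for $\tilde\xi>\lambda$, so the region $|z|\leq\lambda$ must be handled through the general estimate \eqref{es:Omg}. (The paper's own proof glosses over this point by applying \eqref{es.Omg.11} throughout; it turns out the end result is the same but the pointwise bound it uses for $|z|\leq\lambda$ is not literally justified.) The resolution is short: for $\tilde\xi\leq\lambda$ the tail integral in \eqref{es:Omg} dominates and gives $|\rho(\tilde x)-\rho(\tilde y)|\lesssim(\delta_2\lambda^{\alpha-2}+\overline\rho\|F_0\|_{L^\infty})\tilde\xi$; plugging this into the radial integral over $|z|\leq\lambda$ with $\omega_2(|z|)\leq\delta_2\lambda^{-1}|z|$ produces $\delta_2^2\lambda^{-1}+\overline\rho\|F_0\|_{L^\infty}\delta_2\lambda^{1-\alpha}$, which is absorbed using $\delta_2\leq\tfrac43\omega_2(\xi)\leq\tfrac43 V_0$. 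But you stop at ``this is where the care is required'' and never carry it out, so the proposal as written does not establish the lemma.

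In short: the right lemma is invoked, but the simpler radial reduction is missed, the part you do compute has a wrong power of $\xi$, and the hardest step is acknowledged rather than done.
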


%

\begin{proof}[Proof of Lemma \ref{lem:K5-subc}]
The proof of this lemma mainly relies on the following result, which states that other than the MOC $\omega_1(\cdot)$,
one can obtain an additional control on the quantity $|\rho(\tilde{x},t) - \rho(\tilde{y},t)|$ for every $\tilde{x},\tilde{y}\in \mathbb{R}^d$.

Now we can use \eqref{es.Omg.11} to replace the estimate of $\big|(\rho(x+z)-\rho(x)) - (\rho(y+z)-\rho(y))\big|$
in $\mathcal{K}$, so that for every $\lambda< \xi \leq \Xi_2$,
\begin{align}
  \mathcal{K} & \leq 2 c_\alpha \mathrm{p.v.} \int_{|z|\leq 2\xi}
  \Big(C_0 \overline{\rho}\|F_0\|_{L^\infty} |z| + \tfrac{2\widetilde{C}_4}{(\alpha-1)(2-\alpha)} \omega_2(|z|) |z|^{\alpha-1} \Big)
  \frac{\omega_2(|z|, t_1)}{|z|^{d+\alpha}} \dd z \nonumber \\
  & \leq 2 e^{-c_0 t_1} c_\alpha  \sigma_{d-1} \bigg( C_0 \overline{\rho} \|F_0\|_{L^\infty} \int_0^{2\xi}
  \frac{\omega_2(\eta)}{\eta^\alpha} \dd \eta
  + \tfrac{2\widetilde{C}_4}{(\alpha-1)(2-\alpha)}
  \int_0^{2\xi} \omega_2(\eta) \frac{\omega_2(\eta)}{\eta^2} \dd \eta \bigg)
  \nonumber \\
  & \leq 2 e^{-c_0 t_1} c_\alpha\sigma_{d-1} C_0 \overline{\rho} \|F_0\|_{L^\infty} \bigg( \int_0^\lambda
  \delta_2 \lambda^{-1}\eta^{1-\alpha} \dd \eta
  + \omega_2(2\xi)\int_\lambda^{2\xi} \eta^{-\alpha} \dd \eta \bigg)
  \nonumber \\
  & \quad + 2 e^{-c_0 t_1} c_\alpha\sigma_{d-1} \tfrac{2\widetilde{C}_4}{(\alpha-1)(2-\alpha)}
  \bigg(\int_0^\lambda \delta_2^2  \lambda^{-2}  \dd \eta
  + \omega_2^2(2\xi) \int_\lambda^{2\xi} \frac{1}{\eta^2} \dd \eta \bigg)
  \nonumber \\
  & \leq 2 e^{-c_0 t_1} c_\alpha\sigma_{d-1} C_0 \bar{\rho}\|F_0\|_{L^\infty}  \Big(\tfrac{1}{2-\alpha} \delta_2 \lambda^{1-\alpha} + 2\omega_2(\xi)\tfrac{1}{\alpha-1} \lambda^{1-\alpha} \Big)
  \nonumber \\
  & \quad + e^{-c_0 t_1} \tfrac{4\widetilde{C}_4 c_\alpha \sigma_{d-1} }{(\alpha-1)(2-\alpha)}
  \Big(\delta_2^2 \lambda^{-1} + 4\omega_2^2(\xi) \lambda^{-1} \Big)
  \nonumber \\
  & \leq e^{-c_0 t_1} \tfrac{32 c_\alpha \sigma_{d-1}}{(\alpha-1)(2-\alpha)} \Big(C_0\overline{\rho} \|F_0\|_{L^\infty}
  \omega_2(\xi) \lambda^{1-\alpha} + \widetilde{C}_4 V_0\, \omega_2(\xi) \lambda^{-1} \Big), \nonumber
\end{align}
where in the last line we have used the facts that $\delta_2 \leq \frac{4}{3} \omega_2(\xi)$
and $\omega_2(\xi) \leq \omega_2(\Xi_2) = V_0$.
\end{proof}


Therefore, collecting \eqref{aim-equi} and \eqref{eq:par-t-u}, \eqref{es.upxu}, \eqref{es:J2-13}, \eqref{es:J2-24}, \eqref{split:K}, \eqref{es:K1-2-3},
\eqref{es:K4} leads to that for every $\alpha\in (0,2)$ and $0<\xi \leq \Xi_2$,
\begin{equation}\label{eq:GE-u}
\begin{split}
  & \quad \partial_t\big(u(x,t) -u(y,t) \big)|_{t=t_1} + c_0\, \omega_2(\xi,t_1) \\
  & \leq e^{- c_0\,t_1} \Big(-\underline{\rho} D_{\alpha,2}(\xi)
  + |u(x) - u(y)| \omega_2'(\xi) + c_0 \, \omega_2(\xi) + \mathcal{K} \Big) \\
  & \quad + e^{-c_0 t_1} \times
  \begin{cases}
    C_2 \delta_2 \omega_1(\xi) \lambda^{-\mu} \xi^{\mu - \alpha},\quad & \mathrm{for}\;\; 0<\xi \leq \lambda, \\
    C_2 \delta_2 \omega_1(\xi) \xi^{-\alpha}, \quad & \mathrm{for}\;\; \xi \geq \lambda,
  \end{cases}
\end{split}
\end{equation}
where $D_{\alpha,2}(\xi)$ satisfies \eqref{es:Dalp2} and $\mathcal{K}$ given by \eqref{es:K4} satisfies the estimates in Lemmas \ref{lem:K5} and \ref{lem:K5-subc}.

\section{The subcritical regime: global well-posedness and asymptotic behavior}\label{sec:thm-sub}
In this section, we finalize the application of the MOC method and provide a proof of Theorems \ref{thm.main} and \ref{thm.flocking}
for the subcritical regime when $\alpha\in(1,2)$. The same result has been previously established in \cite{lear2021unidirectional},
and here we present an alternative proof employing a different analytical approach.

Our objective is to demonstrate that both $\rho(t)$ and $u(t)$ satisfy the MOCs $\omega_1(\xi)$ and $\omega_2(\xi,t)$, respectively, as defined in \eqref{def:omeg1} and \eqref{def.omeg2t}. By leveraging Lemma \ref{lem.brk}, our task reduces to proving the validity of \eqref{aim.1} and \eqref{aim-equi}. For the sake of simplicity, we assume $\delta_1=\delta_2=:\delta$, resulting in $\omega_1(\xi)=\omega_2(\xi)=:\omega(\xi)$.

We will now proceed to establish the validity of \eqref{aim.1} for any $x\neq y\in\T^d$ with $\xi=|x-y|\in (0,\Xi_1]$
(recalling $\Xi_1$ is given by \eqref{def:Xi1}). The proof can be divided into two parts.


\medskip\noindent $\bullet$
For every $0<\xi\leq \lambda$, we apply \eqref{def.omg}, \eqref{es:Dalp2}, and \eqref{es:Aalp} to \eqref{eq:GE-rho}, resulting in the following estimate:
\begin{align}
  &\partial_t \rho(x,t_1) - \partial_t \rho(y,t_1)
  \leq \delta\lambda^{-1-\mu} \xi^{1+\mu- \alpha }
  \Big(- \tfrac {C_1 \mu \underline{\rho}}{4}   +  C_2 \delta
  + \overline{\rho} \|F_0\|_{L^\infty} \lambda^\mu \xi^{\alpha-\mu}+ \nonumber\\
  &\qquad\qquad + \overline{\rho}^2  \|\nabla F_0\|_{L^\infty}
  \delta^{-1} \lambda^{1+\mu} \xi^{\alpha-\mu}
  + \tfrac{\overline{\rho}^3}{c_0} \|H_0\|_{L^\infty} \lambda^\mu
  \xi^{\alpha-\mu} + \delta \lambda^{-1+\mu}\xi^{\alpha-\mu}\Big).\label{Lip.rho.1}
\end{align}
Note that for the last term of \eqref{eq:GE-rho}, we have used
\begin{equation}\label{eq:uMOC}
	|u(x,t_1)-u(y,t_1)|\leq\omega_2(\xi,t_1) \leq \omega_2(\xi). 
\end{equation}
The right-hand side of \eqref{Lip.rho.1} can be made to be strictly negative by choosing $\delta$ and $\lambda$ small enough. Set $\mu=\frac{\alpha}{2}$. First, we choose $\delta$ such that $C_2\delta<\frac{C_1\alpha\underline{\rho}}{16}$.  All the rest terms are scaling subcritical and can be made small by choosing $\lambda$ small enough. Indeed, we have
\[\lambda^{\frac\alpha2}\xi^{\frac\alpha2}+\delta^{-1}\lambda^{1+\frac\alpha2}\xi^{\frac\alpha2}+\delta\lambda^{-1+\frac\alpha2}\xi^{\frac\alpha2}\leq \lambda^{\alpha-1}(1+\delta^{-1}).\]
Taking a small $\lambda$, depending on $\delta, \alpha$, and $C_1,\overline{\rho},\underline{\rho},\|F_0\|_{L^\infty},\|\nabla F_0\|_{L^\infty},\|H_0\|_{L^\infty}$, the rest terms can be made smaller than $\frac{C_1\alpha\underline{\rho}}{16}$.

\medskip\noindent $\bullet$ For every $\lambda<\xi\leq \Xi_1 = \lambda e^{2\delta^{-1} \overline{\rho}-\frac{3}{2}} $,
we apply \eqref{def.omg}, \eqref{es:Dalp2}, \eqref{es:Aalp}, and \eqref{eq:uMOC} to \eqref{eq:GE-rho} and use the facts that $\frac{3}{4}\delta\leq \omega(\xi)$.
 It yields
\begin{align}
  &\partial_t \rho(x,t_1) - \partial_t \rho(y,t_1)
  \leq \tfrac{\omega(\xi)}{\xi^\alpha}\Big(- \tfrac{C_1 \underline{\rho}}{2}
  + C_2 \delta +\overline{\rho} \|F_0\|_{L^\infty}\xi^\alpha+\nonumber\\
  & \qquad\qquad + \tfrac{4}{3} \overline{\rho}^2 \|\nabla F_0\|_{L^\infty}\delta^{-1}\xi^{1+\alpha}
  + \tfrac{4}{3}\tfrac{\overline{\rho}^3}{c_0} \|H_0\|_{L^\infty}\lambda^{-1}\xi^{1+\alpha} + \tfrac{\delta}{2}\xi^{\alpha-1}\Big).\label{Lip.rho.2} 
\end{align}
The right-hand side of \eqref{Lip.rho.2} can be made to be strictly negative by choosing $\delta$ and $\lambda$ small enough. First, we choose $\delta$ such that $C_2\delta<\frac{C_1\underline{\rho}}{4}$. All the rest terms are scaling subcritical and can be made small by choosing $\lambda$ small enough. Indeed, we have
\[\xi^\alpha+\delta^{-1}\xi^{1+\alpha}+\lambda^{-1}\xi^{1+\alpha}+\xi^{\alpha-1}\leq \lambda^{\alpha-1}\big(1+e^{4\delta^{-1}\overline{\rho}-3}(1+\delta^{-1})\big).\]
Taking a small $\lambda$, depending on $\delta, \alpha, \overline{\rho}$, and $C_1,\underline{\rho},\|F_0\|_{L^\infty},\|\nabla F_0\|_{L^\infty},\|H_0\|_{L^\infty}$, the rest terms can be made smaller than $\frac{C_1\underline{\rho}}{4}$.

Next we justify that \eqref{aim-equi} holds for any $x\neq y\in\T^d$ with $\xi=|x-y|\in (0,\Xi_2]$
(recalling $\Xi_2$ is given by \eqref{def:Xi2}). We also consider two cases.

\medskip\noindent $\bullet$
For every $0<\xi\leq\lambda$,
by using \eqref{def.omg} and \eqref{es:Dalp2}, \eqref{es:K5-1}, \eqref{eq:GE-u}, \eqref{eq:uMOC}, we have
\begin{align}
  & \quad \partial_t\big(u(x,t) -u(y,t) \big)|_{t=t_1} + c_0\, \omega_2(\xi,t_1) \nonumber\\
  & \leq e^{-c_0 t_1} \Big( - \tfrac{C_1 \mu \underline{\rho}}{4} \delta \lambda^{-1-\mu} \xi^{1+\mu- \alpha}
  + \delta^2 \lambda^{ -2} \xi  + c_0 \delta \lambda^{-1}\xi 
  + C_3 \delta^2 \lambda^{-2} \xi^{2-\alpha}
  + C_2 \delta^2 \lambda^{-1-\mu} \xi^{1+\mu- \alpha}   \Big) \nonumber\\
  & \leq e^{- c_0\,t_1} \delta \lambda^{-1-\frac{\alpha}{2}} \xi^{1-\frac{\alpha}{2}}
  \Big( - \tfrac{C_1 \alpha \underline{\rho}}{8} + \delta \lambda^{-1+\frac{\alpha}{2}} \xi^{\frac{\alpha}{2}}
  + c_0 \lambda^{\frac{\alpha}{2}} \xi^{\frac{\alpha}{2}} + C_3 \delta \lambda^{-1+\frac{\alpha}{2}} \xi^{1-\frac{\alpha}{2}}  + C_2 \delta \Big),\label{eq:Lip-u-1}
\end{align}
where in the last inequality, we take $\mu =\frac{\alpha}{2}$.
The right-hand side of \eqref{eq:Lip-u-1} can be made to be strictly negative by choosing $\delta$ and $\lambda$ small enough. First, we choose $\delta$ such that 
\[C_3 \delta \lambda^{-1+\frac{\alpha}{2}} \xi^{1-\frac{\alpha}{2}}  + C_2 \delta\leq (C_2+C_3)\delta<\tfrac{C_1\alpha\underline{\rho}}{16}.\]
The remaining two terms are scaling subcritical, we may pick $\lambda$ small such that
\[\delta \lambda^{-1+\frac{\alpha}{2}} \xi^{\frac{\alpha}{2}}
  + c_0 \lambda^{\frac{\alpha}{2}} \xi^{\frac{\alpha}{2}}\leq \delta\lambda^{\alpha-1}+c_0\lambda^\alpha<\tfrac{C_1\alpha\underline{\rho}}{16}.\]

\medskip\noindent $\bullet$
For every $\lambda < \xi \leq \Xi_2 = \lambda e^{2\delta_1^{-1}V_0 -\frac{3}{2}}$,
using \eqref{def.omg}, \eqref{es:Dalp2}, \eqref{es:K5-subc2}, \eqref{eq:GE-u}, \eqref{eq:uMOC},
we have
\begin{align}
  & \quad \partial_t\big(u(x,t) -u(y,t) \big)|_{t=t_1} + c_0\, \omega_2(\xi,t_1) \nonumber \\
  & \leq e^{-c_0 t_1} \tfrac{\omega(\xi)}{\xi^\alpha}\Big(- \tfrac{C_1 \underline{\rho}}{2}   + \tfrac{\delta}{2} \xi^{\alpha-1} + c_0\xi^\alpha 
   + C_4  \Big(\overline{\rho} \|F_0\|_{L^\infty}
  \lambda^{1-\alpha} + V_0\, \lambda^{-1} \Big)\xi^\alpha + C_2 \delta\Big). \label{eq:Lip-u-2b}
\end{align}
The right-hand side of \eqref{eq:Lip-u-2b} can be made to be strictly negative by choosing $\delta$ and $\lambda$ small enough. First, we choose $\delta$ such that $C_2\delta<\frac{C_1\underline{\rho}}{4}$. All the rest terms are scaling subcritical and can be made small by choosing $\lambda$ small enough. Indeed, we have
\[\delta\xi^{\alpha-1}+\xi^{\alpha}+(\lambda^{1-\alpha}+\lambda^{-1})\xi^{\alpha}\leq \lambda^{\alpha-1}\big(1+e^{4\delta^{-1}V_0-3}\big).\]
Taking a small $\lambda$, depending on $\delta, \alpha, V_0$, and $C_1,\underline{\rho},\overline{\rho},\|F_0\|_{L^\infty}$, the rest terms can be made smaller than $\frac{C_1\underline{\rho}}{4}$.

Collecting all the estimates above and applying Lemma \ref{lem.brk}, we obtain the desired Lipschitz bounds \eqref{eq:rho-Lip} and \eqref{eq:u-Lip}. In combination with the blowup criterion \eqref{blow0}, we conclude the global wellposedness of smooth solution for the system \eqref{eq:Euni} in the subcritical regime $1<\alpha <2$. 

Moreover, the estimate \eqref{eq:u-Lip}, together with Lemma \ref{lem:exp-decay} directly implies the exponential decay of the velocity \eqref{eq:flock1}. The strong flocking estimate \eqref{eq:flock2} follows, see e.g. \cite[Pg. 827]{lear2021unidirectional}.


\section{The critical regime: global well-posedness and asymptotic behavior}\label{sec:crit}
In this section, we delve into the critical regime characterized by $\alpha=1$. Given the critical scaling \eqref{eq:criticalscaling} for both $\rho$ and $u$, the task of establishing a global well-posedness theory becomes notably more challenging compared to the subcritical regime.

It is worth addressing a key challenge in applying the framework presented in Section \ref{sec:thm-sub}. With $\alpha=1$, certain terms in the estimates, such as those in equations \eqref{Lip.rho.1}, \eqref{Lip.rho.2}, \eqref{eq:Lip-u-1}, and \eqref{eq:Lip-u-2b}, transition from subcritical to critical. 
For instance, the last term in \eqref{Lip.rho.1} becomes $\delta$, which can not be made small by choosing a small $\lambda$. Nonetheless, we may still control the term by choosing $\delta$ small.

However, there is one critical term that does not become small through diminutive $\delta$ and $\lambda$ values. It is the penultimate term in \eqref{eq:Lip-u-2b}: $C_4V_0\lambda^{-1}\xi$, coming from the term $\mathcal{K}$. To compound the challenge, estimate \eqref{es:K5-subc2} is inapplicable in the case of $\alpha=1$ due to the coefficient $C_4$ growing infinitely large as $\alpha$ approaches $1$. In fact, it is well-known that the Reisz transform $\partial_{x_1}\Lambda^{-1}$ does not preserve the MOC $\omega_2(\xi)$, thereby precluding the validity of \eqref{es:K5-subc2} for the case of $\alpha=1$.

Our main idea is to simultaneously propagate the MOCs of $\rho$ and $u$. To control the term $\mathcal{K}$, we may use the fact that $\rho$ obeys the MOC $\omega_1$, that is preserved in time. This leads to a stronger bound \eqref{es:K5-2}. The aforementioned penultimate term in \eqref{eq:Lip-u-2b} becomes $C_3V_0$ (with a finite $C_3$, in oppose to an infinite $C_4$). However, it is still not guaranteed that this term can be controlled by the dissipation $\tfrac{C_1\underline{\rho}}{2}$, for arbitrary initial data.

In light of this, as elaborated in Remark \ref{rmk:K}, we introduce the relation:
\begin{align}\label{eq:del1-del2}
  \delta_1 = \kappa \delta_2,\quad \textrm{with some}\;\; \kappa\in(0,1) \;\;\textrm{chosen later}.
\end{align}
Thus $\omega_1(\xi) = \kappa\, \omega_2(\xi)$, where $\omega_1(\xi)$ and $\omega_2(\xi)$ are given by
\eqref{def:omeg1} and \eqref{def.omeg2}. By taking a small auxiliary parameter $\kappa$, we are able to control the aforesaid term by the dissipation, for any smooth initial data.

Let us repeat the estimates in Section \ref{sec:thm-sub}, using \eqref{eq:del1-del2}.

We first prove that \eqref{aim.1} holds for any $x\neq y\in\T^d$ with $\xi=|x-y|\in (0,\Xi_1]$. Similarly as \eqref{Lip.rho.1} and \eqref{Lip.rho.2}, we set $\mu=\frac12$ and get the following. 

\medskip\noindent $\bullet$ For every $0<\xi \leq \lambda$,
\begin{align}
  &\partial_t \rho(x,t_1) - \partial_t \rho(y,t_1)
  \leq \delta_1\lambda^{-\frac32} \xi^{\frac12}
  \Big(- \tfrac {C_1\underline{\rho}}{8}   +  C_2 \delta_1
  + \overline{\rho} \|F_0\|_{L^\infty} \lambda^{\frac12} \xi^{\frac12}+ \nonumber\\
  &\qquad\qquad + \overline{\rho}^2  \|\nabla F_0\|_{L^\infty}
  \delta_1^{-1} \lambda^{\frac32} \xi^{\frac12}
  + \tfrac{\overline{\rho}^3}{c_0} \|H_0\|_{L^\infty} \lambda^{\frac12}
  \xi^{\frac12} + \delta_2 \lambda^{-\frac12}\xi^{\frac12}\Big),\label{Lip.rho.3}
\end{align}

\medskip\noindent $\bullet$ For every $\lambda<\xi\leq \Xi_1 = \lambda e^{2\delta_1^{-1} \overline{\rho}-\frac{3}{2}} $,
\begin{align}
  &\partial_t \rho(x,t_1) - \partial_t \rho(y,t_1)
  \leq \tfrac{\omega_1(\xi)}{\xi}\Big(- \tfrac{C_1 \underline{\rho}}{2}
  + C_2 \delta_1 +\overline{\rho} \|F_0\|_{L^\infty}\xi+\nonumber\\
  & \qquad\qquad + \tfrac{4}{3} \overline{\rho}^2 \|\nabla F_0\|_{L^\infty}\delta_1^{-1}\xi^{2}
  + \tfrac{4}{3}\tfrac{\overline{\rho}^3}{c_0} \|H_0\|_{L^\infty}\lambda^{-1}\xi^{2} + \tfrac{\delta_2}{2}\Big).\label{Lip.rho.4} 
\end{align}
Note that the last terms in \eqref{Lip.rho.3} and \eqref{Lip.rho.4} are scaling critical, but can be made small as long as $\delta_2$ is small.
Following the same procedure as in Section \ref{sec:thm-sub}, we may take $\delta_2$, $\kappa$ and then $\lambda$ small enough to make sure the right hand-side of \eqref{Lip.rho.3} and \eqref{Lip.rho.4} are negative, finishing the proof of \eqref{aim.1}.

Next we prove that \eqref{aim-equi} holds for any $x\neq y\in\T^d$ with $\xi=|x-y|\in (0,\Xi_2]$.

\medskip\noindent $\bullet$ For every $0<\xi\leq\lambda$, arguing as \eqref{eq:Lip-u-1}, we have
\begin{align}
  & \quad \partial_t\big(u(x,t) -u(y,t) \big)|_{t=t_1} + c_0\, \omega_2(\xi,t_1) \nonumber\\
  & \leq e^{- c_0\,t_1} \delta_2 \lambda^{-\frac32} \xi^{\frac12}
  \Big( - \tfrac{C_1 \underline{\rho}}{8} + \delta_2 \lambda^{-\frac12} \xi^{\frac12}
  + c_0 \lambda^{\frac12} \xi^{\frac12} + C_3 \delta_1 \lambda^{-\frac12} \xi^{\frac12}  + C_2 \delta_1 \Big).\label{eq:Lip-u-3}
\end{align}
There are three terms $\delta_2 \lambda^{-\frac12} \xi^{\frac12}, C_3 \delta_1 \lambda^{-\frac12} \xi^{\frac12}$ and $C_2 \delta_1 $ that are critical, all of which can be made small by choosing $\delta_2$ and $\kappa$ small enough. The remain subcritical term can be made small by choosing $\lambda$ small enough. 

\medskip\noindent $\bullet$ For every $\lambda<\xi\leq \Xi_2=\lambda e^{2 \delta_2^{-1} V_0 -\frac{3}{2}}$, we follow \eqref{eq:Lip-u-2b} but replace the estimate on $\mathcal{K}$ by \eqref{es:K5-2}. This leads to
\begin{align}
  & \quad \partial_t\big(u(x,t) -u(y,t) \big)|_{t=t_1} + c_0\, \omega_2(\xi,t_1) \nonumber \\
  & \leq e^{-c_0 t_1} \Big(- \tfrac{C_1 \underline{\rho}}{2}\,\tfrac{\omega_2(\xi)}{\xi}   + \tfrac{\delta_2}{2\xi}\,\omega_2(\xi) + c_0\omega_2(\xi) 
   + C_3  \big(\delta_2+V_0 \big)\tfrac{\omega_1(\xi)}{\xi} + C_2 \delta_2\tfrac{\omega_1(\xi)}{\xi}\Big)\nonumber \\
  & \leq e^{-c_0 t_1} \tfrac{\omega_2(\xi)}{\xi}\Big(- \tfrac{C_1 \underline{\rho}}{2}   + \tfrac{\delta_2}{2} + c_0\xi 
   + C_3  \big(\delta_2+V_0 \big)\kappa + C_2 \delta_2\kappa\Big). \label{eq:Lip-u-2}
\end{align}
Notably, the most dangerous term $C_3V_0\kappa$ can be made small by choosing a small enough parameter $\kappa$. The rest of the terms can be controlled by the dissipation by taking $\delta_2$ and $\lambda$ small enough, similarly as before.

Thus, by choosing $\delta_2, \kappa$ and $\lambda$, the right hand-side of \eqref{eq:Lip-u-3} and \eqref{eq:Lip-u-2} can be made negative, finishing the proof of \eqref{aim-equi}.

Now we apply Lemma \ref{lem.brk} to obtain Lipschitz bounds on $\rho$ and $u$. Global well-posdness and asymptotic strong flocking behavior follows from the same argument in Section \ref{sec:thm-sub}. This completes the proof of Theorems \ref{thm.main} and \ref{thm.flocking}.


\section{The supercritical regime: refined regularity criterion}\label{sec:reg-cr}
In this section, our focus is on proving Theorem \ref{thm:reg-cr}, which concerns the refined regularity criterion for the system \eqref{eq:Euni} in the supercritical regime $0<\alpha<1$.

The main challenge in establishing a global well-posedness theory lies in controlling the advection term. It has a supercritical scaling under our framework. We impose an additional regularity criterion \eqref{eq:reg-cr} that allows us to obtain enough control to the advection term. Notably, our criterion \eqref{eq:reg-cr} only requires a certain H\"older regularity on $u$, which represents a significant improvement over existing criteria such as \eqref{blow0}.

We utilize the criterion \eqref{eq:reg-cr} to obtain an improved bound
\begin{align}\label{es:N1-2}
  |u(x)-u(y)| \leq \|u\|_{C^\sigma} \xi^\sigma.
\end{align}
for any $x, y\in\T^d$ and $\xi=|x-y|$. The bound \eqref{es:N1-2} replaces \eqref{eq:uMOC} in controlling the advection term.

In the following, we will verify \eqref{aim.1} and \eqref{aim-equi}. We make use of \eqref{es:N1-2} to handle the advection term, and we claim that the rest of the terms can be treated through the same procedure as the critical regime in Section \ref{sec:crit}.

Recalling that $\omega_1(\xi)$ and $\omega_2(\xi,t)$ are given by \eqref{def:omeg1} and \eqref{def.omeg2t} respectively, we also assume that \eqref{eq:del1-del2} holds and thus $\omega_1(\xi) = \kappa\, \omega_2(\xi)$ with a small parameter $\kappa\in(0,1)$ to be chosen later.

We first prove \eqref{aim.1} holds for any $x\neq y\in\T^d$ with $\xi=|x-y|\in (0,\Xi_1]$. 

\medskip\noindent $\bullet$ For every $0<\xi\leq\lambda$, similar to \eqref{Lip.rho.1}, we have 
\begin{align}
  &\partial_t \rho(x,t_1) - \partial_t \rho(y,t_1)
  \leq \delta_1\lambda^{-1-\mu} \xi^{1+\mu- \alpha }
  \Big(- \tfrac {C_1 \mu \underline{\rho}}{4}   +  C_2 \delta_1
  + \overline{\rho} \|F_0\|_{L^\infty} \lambda^\mu \xi^{\alpha-\mu}+ \label{Lip.rho.1.sup}\\
  &\qquad\qquad + \overline{\rho}^2  \|\nabla F_0\|_{L^\infty}
  \delta_1^{-1} \lambda^{1+\mu} \xi^{\alpha-\mu}
  + \tfrac{\overline{\rho}^3}{c_0} \|H_0\|_{L^\infty} \lambda^\mu
  \xi^{\alpha-\mu} + \|u(t_1)\|_{C^\alpha}\lambda^{\mu}\xi^{\sigma-(1-\alpha)-\mu}\Big),\nonumber
\end{align}
where we have used \eqref{es:N1-2} for the last term. Under the regularity assumption \eqref{eq:reg-cr}, $\sigma>1-\alpha$.
We set $\mu=\frac{\sigma-(1-\alpha)}{2}>0$. Then the last term in \eqref{Lip.rho.1.sup}
\begin{equation}\label{eq:u-sup-1}
\|u(t_1)\|_{C^\alpha}\lambda^{\mu}\xi^{\sigma-(1-\alpha)-\mu}\leq
\|u(t_1)\|_{C^\alpha}\lambda^{\sigma-(1-\alpha)}	
\end{equation}
is subcritical and can be made small by taking $\lambda$ sufficiently small. All other terms in \eqref{Lip.rho.1.sup} can be treated the same as \eqref{Lip.rho.1}.

\medskip\noindent $\bullet$ For $\lambda<\xi\leq \Xi_1 = \lambda e^{2\delta_1^{-1} \overline{\rho}-\frac{3}{2}}$, we follow \eqref{Lip.rho.2} and use \eqref{es:N1-2} to get
\begin{align}
  &\partial_t \rho(x,t_1) - \partial_t \rho(y,t_1)
  \leq \tfrac{\omega_1(\xi)}{\xi^\alpha}\Big(- \tfrac{C_1 \underline{\rho}}{2}
  + C_2 \delta_1 +\overline{\rho} \|F_0\|_{L^\infty}\xi^\alpha+\nonumber\\
  & \qquad\qquad + \tfrac{4}{3} \overline{\rho}^2 \|\nabla F_0\|_{L^\infty}\delta_1^{-1}\xi^{1+\alpha}
  + \tfrac{4}{3}\tfrac{\overline{\rho}^3}{c_0} \|H_0\|_{L^\infty}\lambda^{-1}\xi^{1+\alpha}\Big)+\|u(t_1)\|_{C^\alpha}\tfrac{\delta_1}{2}\xi^{\sigma-1}.\label{Lip.rho.2.sup} 
\end{align}
To control the last term, we use the fact $\omega_i(\xi)\geq\omega_i(\lambda)=\frac34\delta_i$ and obtain
\begin{equation}\label{eq:u-sup-2}
\|u(t_1)\|_{C^\alpha}\tfrac{\delta_i}{2}\xi^{\sigma-1}\leq \tfrac{\omega_i(\xi)}{\xi^\alpha}\cdot\tfrac{4}{3\delta_i}\cdot\|u(t_1)\|_{C^\alpha}\tfrac{\delta_i}{2}\xi^{\sigma-(1-\alpha)}
\leq\tfrac{\omega_i(\xi)}{\xi^\alpha}\cdot\tfrac23\|u(t_1)\|_{C^\alpha}e^{2\delta_i^{-1} \overline{\rho}-\frac{3}{2}}\cdot\lambda^{\sigma-(1-\alpha)}.	
\end{equation}
Taking $i=1$, we deduce that the last term in \eqref{Lip.rho.2.sup} is scaling subcritical, and can be controlled by the dissipation term by choosing $\lambda$ small enough.
All other terms in \eqref{Lip.rho.2.sup} can be treated the same as \eqref{Lip.rho.2}.

Next we show that \eqref{aim-equi} holds for any $x\neq y\in\T^d$ with $\xi=|x-y|\in (0,\Xi_2]$.

\medskip\noindent $\bullet$
For every $0<\xi \leq \lambda$, arguing similarly as \eqref{eq:Lip-u-3} and using \eqref{es:N1-2}, we obtain
\begin{align}
  & \quad \partial_t\big(u(x,t) -u(y,t) \big)|_{t=t_1} + c_0\, \omega_2(\xi,t_1) \nonumber\\
  & \leq e^{- c_0\,t_1} \delta_2 \lambda^{-1-\mu} \xi^{1+\mu-\alpha}
  \Big( - \tfrac{C_1 \mu \underline{\rho}}{16} + \|u(t_1)\|_{C^\sigma} \lambda^\mu \xi^{\sigma-(1-\alpha) -\mu} +\nonumber\\
  &\qquad\qquad\qquad~~+ c_0 \lambda^{\mu} \xi^{\alpha-\mu} + C_3 \delta_1 \lambda^{-1+\frac{\alpha}{2}} \xi^{1-\frac{\alpha}{2}}  + C_2 \delta_1 \Big).\label{eq:Lip-u-1-sup}
\end{align}
Taking $\mu=\tfrac{\sigma-(1-\alpha)}{2}$ and applying \eqref{eq:u-sup-1}, we know the advection term can be controlled by the dissipation by choosing a small $\lambda$. The remaining terms in \eqref{eq:Lip-u-1-sup} can be treated same as \eqref{eq:Lip-u-1}.

\medskip\noindent $\bullet$
For every $\lambda <\xi \leq \Xi_2 = \lambda e^{2\delta_2^{-1} V_0 -\frac{3}{2}}$, analogous to \eqref{eq:Lip-u-2}, we apply \eqref{es:K5-2} to the term $\mathcal{K}$ and use \eqref{es:N1-2} for the advection term. It yields
\begin{align}
  & \quad \partial_t\big(u(x,t) -u(y,t) \big)|_{t=t_1} + c_0\, \omega_2(\xi,t_1) \nonumber \\
  & \leq e^{-c_0 t_1} \tfrac{\omega_2(\xi)}{\xi^\alpha}\Big(- \tfrac{C_1 \underline{\rho}}{2} + c_0\xi^\alpha 
   + C_3  \big(\delta_2+V_0 \big)\kappa + C_2 \delta_2\kappa\Big)+e^{-c_0 t_1}\|u(t_1)\|_{C^\sigma} \tfrac{\delta_2}{2} \xi^{\sigma-1}. \label{eq:Lip-u-2-sup}
\end{align}
Applying \eqref{eq:u-sup-2} with $i=2$, we see that the last term in \eqref{eq:Lip-u-2-sup} is subcritical, and can be controlled by the dissipation term by choosing $\lambda$ small enough. All other terms in \eqref{eq:Lip-u-2-sup} can be treated similar as \eqref{eq:Lip-u-2}, by taking sufficiently small $\delta_2, \kappa$ and $\lambda$.

Now we apply Lemma \ref{lem.brk} and deduce Lipschitz bounds on $\rho$ and $u$, which then lead to global well-posedness and asymptotic strong flocking behavior.

We would like to emphasize that the parameters $\delta_i$ are independent of the a priori bound in \eqref{eq:reg-cr}. But $\lambda$ depends on $\|u\|_{L^\infty(\R_+;C^\sigma(\T^d))}$. Moreover, from \eqref{eq:u-sup-1} and \eqref{eq:u-sup-2}, we see the relation $\|u(t_1)\|_{C^\sigma}\lambda^{\sigma-(1-\alpha)}\lesssim1$. Hence, we pick
\[\lambda\sim \|u\|_{L^\infty(\R_+;C^\sigma(\T^d))}^{-\frac{1}{\sigma-(1-\alpha)}}.\]
Together with \eqref{eq:rho-Lip} and \eqref{eq:u-Lip}, we conclude with the Lipschitz bounds \eqref{eq:rho-Lip0} and \eqref{eq:u-Lip0}. This completes the proof of Theorem \ref{thm:reg-cr}.

\begin{remark}\label{rmk:supcrit}
 If we extend our regularity criterion \eqref{eq:reg-cr} to encompass the scale-invariant class
\begin{equation}\label{eq:ccrit}  
\sup_{t\in[0,T^{*})}\|u(t)\|_{ C^{1-\alpha}(\T^d)} < \infty,
\end{equation}
namely $\sigma=1-\alpha$, we unearth a distinct challenge. In this context, the advection terms in \eqref{Lip.rho.1.sup}, \eqref{Lip.rho.2.sup}, \eqref{eq:Lip-u-1-sup} and \eqref{eq:Lip-u-2-sup} become critical. Consequently, they can not be made small by choosing $\lambda$ sufficiently small. Furthermore, our earlier choice of $\mu=\tfrac{\sigma-(1-\alpha)}{2}=0$ loses its relevance. Should we adopt any $\mu>0$, the last term in \eqref{Lip.rho.1.sup} that reads $\|u(t_1)\|_{C^\alpha}\lambda^\mu\xi^{-\mu}$ becomes unbounded when $\xi\to0$. Therefore, our existing framework falls short of encapsulating the global well-posedness scenario under the assumption delineated in \eqref{eq:ccrit}.

The implications of whether \eqref{eq:ccrit} could potentially propel global well-posedness, or whether global well-posedness might be attainable without any a priori regularity criterion, constitute intriguing questions that warrant dedicated exploration in the realm of future investigations.

A notable special case worth mentioning is that of "parallel shear flocks", which was examined in \cite[Section 4.1]{lear2022global}. In this context, we consider shear velocities represented by 
\[
\rho= \rho(x_2,\cdots,x_d,t),\quad u = u(x_2,\cdots,x_d,t)
\]
in the system \eqref{eq:Euni}. Consequently, $\partial_t \rho =0$, causing the advection term in equation $\eqref{eq:Euni}_2$ to vanish. Sine the regularity assumption \eqref{eq:reg-cr} is exclusively utilized to address the advection terms  in \eqref{eq:Euni}, our framework assures global regularity and asymptotic strong flocking for parallel shear flocks in the fall range of $0<\alpha<2$, including the supercritical regime, without any a priori regularity criterion. 
\end{remark}

\section{Appendix: proof of Lemmas \ref{lem:D} and \ref{lem.Omg}}\label{sec:lemmas}

We first present the proof of Lemmas \ref{lem:D}.
\begin{proof}[Proof of Lemma \ref{lem:D}]
Below we sketch the proof of \eqref{es:Dalp2}.
For every $0<\xi\leq\lambda$,
due to the concavity of $\omega_i(\xi)$ given by \eqref{def:omeg1} or \eqref{def.omeg2}, we have
$\omega_i(\xi+2\eta)+ \omega_i(\xi-2\eta) - 2\omega_i(\xi)\leq 2\omega_i''(\xi)\eta^2$,
and we use the first integral term in \eqref{es:Dalp} to get that
\begin{align*}
  D_{\alpha,i}(\xi) \geq - C_1 \int_0^{\frac{\xi}{2}}\frac{2\omega_i''(\xi)\eta^2}{\eta^{1+\alpha}} \dd \eta
  =  - C_1\frac{2^{\alpha-1}}{2-\alpha}\omega_i''(\xi)\xi^{2-\alpha}
  =  C_1\frac{\mu(\mu+1)2^{\alpha-1} }{4(2-\alpha)} \delta_i \lambda^{-1-\mu}\xi^{1-\alpha+\mu}.
\end{align*}

For every $\xi >\lambda$, we keep the second integral term in \eqref{es:Dalp}, and using the fact
$\frac{3}{4}\delta_i \leq \omega_i(\xi)$, we have
\begin{align*}
  \forall \eta \geq \tfrac{\xi}{2},\quad \omega_i(2\eta+\xi)-\omega_i(2\eta-\xi)
  \leq \omega_i(2\xi)=\omega_i(\xi)
  + \tfrac{\delta_i}{2}\log 2 \leq \tfrac{3}{2}\omega_i(\xi),
\end{align*}
and thus
\begin{align*}
  D_{\alpha,i}(\xi) \geq C_1 \int_{\frac{\xi}{2}}^\infty \frac{\omega_i(\xi)}{2 \eta^{1+\alpha}} \dd \eta
  = \frac{C_1 2^{\alpha-1}}{\alpha} \omega_i(\xi)\xi^{-\alpha},
\end{align*}
as expected.
\end{proof}

Next we turn to the proof of Lemma \ref{lem.Omg}.
\begin{proof}[Proof of Lemma \ref{lem.Omg}]
By the relation \eqref{eq:Grho}, we have
\begin{align}\label{eq:rho-diff}
  \rho(\tilde{x})-\rho(\tilde{y}) = \big(\partial_{x_1}\Lambda^{-\alpha} u(\tilde{x})
  - \partial_{x_1}\Lambda^{-\alpha} u(\tilde{y}) \big)
  - \big (\Lambda^{-\alpha}G(\tilde{x}) - \Lambda^{-\alpha} G(\tilde{y}) \big).
\end{align}
Noting that (see e.g. \cite[Proposition 3.1]{miao2012global})
\begin{align*}
   \partial_{x_1} \Lambda^{-\alpha}u(x) = c_{\alpha,d} \,\mathrm{p.v.}
  \int_{\R^d} \frac{z_1}{|z|^{d+ 2 -\alpha}} u(x -z) \dd z,
\end{align*}
with $c_{\alpha,d} = \frac{\Gamma(\frac{d+2-\alpha}{2})}{2^{\alpha-1} \pi^{d/2} \Gamma(\frac{\alpha}{2})}$,
and by exactly arguing as \cite[Lemma 3.2]{miao2012global} (for the case $\alpha=1$ see also \cite{kiselev2007global}),
we get
\begin{align}\label{es:Omeg1}
  |\partial_{x_1}\Lambda^{-\alpha} u(\tilde{x}) - \partial_{x_1}\Lambda^{-\alpha} u(\tilde{y})|
  \leq \widetilde{C}_4 \bigg( \int_0^{\tilde\xi}\frac{\omega_2(\eta,t_1)}{\eta^{2-\alpha}}\dd\eta
  + \tilde{\xi}\int_{\tilde\xi}^\infty \frac{\omega_2(\eta,t_1)}{\eta^{3-\alpha}}\dd \eta \bigg).
\end{align}
For the second term on the right-hand side of \eqref{eq:rho-diff}, in view of the $L^\infty$-estimate of $G$ in \eqref{es:G}, we obtain that for $\alpha\in (1,2)$,
\begin{align*}
  |\Lambda^{-\alpha}G(\tilde{x})- \Lambda^{-\alpha}G(\tilde{y}) | \leq
  \|\Lambda^{-\alpha}G\|_{\dot W^{1,\infty}(\T^d)} |\tilde{x} - \tilde{y}| \leq C_0\|G\|_{L^\infty(\T^d)} \tilde{\xi}
  \leq C_0 \overline{\rho} \|F_0\|_{L^\infty} \tilde{\xi}.
\end{align*}
Combining the above two estimates yields the desired inequality \eqref{es:Omg}.

Next, 
we explicitly calculate the integral in \eqref{es:Omeg1} for every $\tilde{\xi} > \lambda$.
Direct calculation gives that
\begin{align*}
  \int_0^{\tilde\xi} \frac{\omega_2(\eta,t_1)}{\eta^{2-\alpha}} \dd\eta &
  \leq \int_0^\lambda \frac{\delta_2 \lambda^{-1}}{\eta^{1-\alpha}} \dd \eta
  + \int_\lambda^{\tilde\xi} \frac{\omega_2(\eta)}{\eta^{2-\alpha}} \dd \eta
  \leq
  \frac{1}{\alpha} \delta_2\lambda^{\alpha-1} + \frac{1}{\alpha-1} \omega_2(\tilde\xi)\tilde\xi^{\alpha-1}
  \leq \frac{ 2}{\alpha-1} \omega_2(\tilde\xi)\tilde\xi^{\alpha-1},
\end{align*}
and
\begin{align*}
  \tilde\xi\int_{\tilde\xi}^\infty \frac{\omega_2(\eta,t_1)}{\eta^{3-\alpha}} \dd\eta
  \leq \tilde\xi \bigg( \frac{1}{2-\alpha} \omega_2(\tilde\xi) \tilde\xi^{\alpha-2}
  + \int_{\tilde\xi}^\infty \frac{\delta_2}{2\eta^{3-\alpha}} \dd \eta\bigg)
  \leq \frac{2}{2-\alpha} \omega_2(\tilde\xi) \tilde\xi^{\alpha-1},
\end{align*}
where in the above we have used the fact that $\delta_2 \leq \frac{4}{3}\omega_2(\tilde\xi)$.
Collecting the above inequalities gives the desired estimate \eqref{es.Omg.11}.
\end{proof}

\vskip3mm

\noindent\textbf{Acknowledgments.}
The authors would like to express their gratitude to Alexander Kiselev and Roman Shvydkoy for their insightful and inspiring remarks.

\noindent\textbf{Funding.}
The authors acknowledge the support of National Key Research and Development Program of China No. 2020YFA0712900 (YL and LX), National Natural Science Foundation of China Nos. 12071043 (YL), 12001041, 12171031 (QM), 11771043, 12271045 (LX), Educational Commission Science Programm of Jiangxi Province Grant No. GJJ2200521 (YL), and National Science Foundation Grants DMS-2108264, DMS-2238219 (CT).

\vskip5mm

\bibliographystyle{plain}

\end{document}